\theoremstyle{plain} %text of this environment is typesetted in italics
\newtheorem{theorem}{\indent\bf Theorem}[section]
\newtheorem{lemma}[theorem]{\indent\bf Lemma}
\newtheorem{corollary}[theorem]{\indent\bf Corollary}
\theoremstyle{definition} %text of this environment is typesetted in roman letters
\newtheorem{definition}[theorem]{\indent\bf Definition}
\newtheorem{remark}[theorem]{\indent\bf Remark}
\newcommand{\ddbar}{\partial \bar{\partial}}
\begin{document}
\pagestyle{plain}
\thispagestyle{plain}

\title[pseudonorm]
{Pseudonorms on direct images of pluricanonical bundles}

\author[Takahiro INAYAMA]{Takahiro INAYAMA}
\address{Department of Mathematics,
	Faculty of Science and Technology,
	Tokyo University of Science,
	2641 Yamazaki, Noda,
	Chiba, 278-8510,
	Japan
}
\email{inayama\textunderscore takahiro@rs.tus.ac.jp}
\email{inayama570@gmail.com}
%\email{e-mail}
\subjclass[2020]{32A36, 32L10}
\keywords{ %key words and phrases
	pseudonorm, linear isometry, Stein morphism, Ohsawa-Takegoshi extension.
}
\date{\today}

%%%%%%%%%%%%%%%%%%%%%%%%%%%%%%%%%%%%%%%%%%%%%%%%%%%%%%
\begin{abstract}
We study pseudonorms on pluricanonical bundles over Stein manifolds. 
We prove that the pseudonorms determine holomorphic structures of Stein manifolds under certain assumptions. 
This theorem is based on and a generalization of the result obtained by Deng, Wang, Zhang and Zhou \cite{DWZZ} for bounded domains in $\mathbb{C}^n$. 
We also investigate Stein morphisms and the pseudonorms on direct images of pluricanonical bundles. 
Our main goal in this paper is to show that the pseudonorms also determine holomorphic structures of Stein morphisms. 
One important technique is an $L^{2/m}$-variant of the Ohsawa-Takegoshi extension theorem. 
\end{abstract}

%%%%%%%%%%%%%%%%%%%%%%%%%%%%%%%%%%%%%%%%%%%%%%%%%%%%%%

\maketitle

%%%%%%%%%%%%%%%%%%%%%%%%%%%%%%%%%%%%%%%%%%%%%%%%%%%%%%

\section{Introduction}
The space of holomorphic functions with finite $L^p$-norm $A^p(\Omega)$ on 
a bounded domain $\Omega$, or the $m$-pluricanonical space $H^0(X, mK_X)$ for a complex manifold $X$ 
plays an important role in understanding its geometric property. Initially, Royden proved that 
if $H^0(C, 2K_C)$ is isomorphic to $H^0(C', 2K_{C'})$ with respect to the canonical norm for compact Riemann surfaces $C, C'$ of genus $g\geq 2$, then $C$ is isomorphic to $C'$ \cite{Roy}. 
There are many other generalized results obtained by Markovic \cite{Marko} for more general classes of Riemann surfaces, by Chi and Yau for projective manifolds of general type \cite{Chi}, \cite{ChiYau} and by 
Deng, Wang, Zhang and Zhou for bounded hyperconvex domains in $\mathbb{C}^n$ \cite{DWZZ}. 
In any case, the pseudonorm, called $L^p$-norm or $L^{2/m}$-norm, plays an essential role. 
%These kinds of programs are often called Yau's pseudonorm projects (cf. \cite{ChiYau}). 
The program for dealing with projective manifolds of general type via the pseudonorm is called Yau's pseudonorm project (cf. \cite{ChiYau}). 

%In this paper, we study a bounded pseudoconvex domain in $\mathbb{C}^r_t\times \mathbb{C}^n_z$ fibered over a bounded domain in $\mathbb{C}^r_t$, 
%and the $L^p$-norm on a stalk of direct image of a sheaf of holomorphic functions. 
%We show that this (pseudo)norm determines a structure of fibrations. 
%To be precise, we prove the following theorem. 

In this paper, following Deng-Wang-Zhang-Zhou's work \cite{DWZZ}, we study a relatively compact hyperconvex domain in a Stein manifold $X\Subset \widetilde{X}$. 
We prove that the space of pluricanonical forms with the pseudonorm determines a holomorphic structure of the base space. 
To be precise, we obtain the following theorem. 

\begin{theorem}\label{thm:stein}
Let $\widetilde{X}$ be an $n$-dimensional Stein manifold, and $\widetilde{Y}$ be an $l$-dimensional Stein manifold. 
We also let $X\Subset \widetilde{X}$ and $Y\Subset \widetilde{Y}$ be relatively compact hyperconvex domains. 
Assume that there exist $m\geq 2$ and 
% $({\rm i})$ the $m$-th Bergman kernels $K_{X, m}$ and $K_{Y, m}$ are exhaustive (see Definition \ref{def:mbergman}), and \\
%$({\rm ii})$ there exists 
a linear isometry 
$$
T : A(X, mK_X) \longrightarrow A(Y, mK_Y)
$$
such that  
$$
\int_X  | u\wedge \overline{u} |^{1/m} = \int_Y | Tu\wedge \overline{Tu} |^{1/m}
$$
for all $u\in A(X, mK_X)$. 
Here we define the space $A(X, mK_X)$ as 
$$
A(X, mK_X):= \{ u\in H^0(X, mK_X) \mid \int_X | u\wedge \overline{u}|^{1/m} < +\infty\},
$$
and 
$$
|u\wedge \overline{u}|^{1/m}:= ( \sqrt{-1}^{mn^2}u\wedge \overline{u} )^{1/m}
$$
for any $u\in H^0(X, mK_X)$ (see Definition \ref{def:l2m}). 

Then we have that $n=l$, and there exists a unique biholomorphic map 
$$
F: X\longrightarrow Y
$$
satisfying the following equation 
$$
|u(z)\wedge \overline{u(z)}|^{1/m}= |F^{\star}(Tu)(z)\wedge \overline{F^{\star}(Tu)(z)}|^{1/m}
$$
for $z\in X$ and $u\in A(X, mK_X)$. 
\end{theorem}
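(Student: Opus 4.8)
The plan is to reconstruct the biholomorphism $F$ from the isometry $T$ by showing that $T$ transports the pointwise geometry encoded in the integral pseudonorm. Throughout I work in local holomorphic coordinates, writing a section $u\in A(X,mK_X)$ as $u=f_u\,(dz_1\wedge\cdots\wedge dz_n)^{\otimes m}$, so that the density $|u\wedge\overline u|^{1/m}$ equals $|f_u|^{2/m}$ times a fixed positive volume element and the global pseudonorm is $\int_X|f_u|^{2/m}\,dV$. The first step is to produce a sufficiently rich supply of sections. Since $X\Subset\widetilde X$ is hyperconvex, I would feed a bounded plurisubharmonic exhaustion into the $L^{2/m}$-variant of the Ohsawa--Takegoshi extension theorem to obtain, for every $z_0\in X$ and every prescribed $1$-jet, a section $u\in A(X,mK_X)$ realizing it. This shows that the evaluation maps separate points and tangent directions, so that the pluricanonical sections define a holomorphic immersion of $X$ into a projectivization of $A(X,mK_X)^{\ast}$, and in particular that finitely many sections give local holomorphic coordinates near any point.

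Second, I would characterize, purely in terms of the integral pseudonorm, the ``peak'' or extremal sections concentrated at a single point. For a fixed $z_0$, among all $u$ with a normalized value at $z_0$ one extremizes $\int_X|u\wedge\overline u|^{1/m}$; the existence of genuine peak sections, again from the extension theorem, which lets one make the mass concentrate arbitrarily near $z_0$, identifies the resulting extremal quantity with the pointwise pseudonorm of a generator of the fiber $(mK_X)_{z_0}$. Because this extremal problem is phrased only through the pseudonorm and the linear structure, and $T$ is a pseudonorm-preserving linear isometry, $T$ carries the family of extremal sections at $z_0\in X$ to the family of extremal sections at a well-defined point of $Y$. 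This assignment defines the map $F\colon X\to Y$; applying the same construction to $T^{-1}$ yields a two-sided inverse, so $F$ is a bijection and, comparing the immersion dimensions on both sides, $n=l$.

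Third, I would prove that $F$ is biholomorphic and satisfies the pointwise identity. Near a point $p\in X$ choose sections $u_1,\dots,u_n$ that furnish local coordinates; then $Tu_1,\dots,Tu_n$ furnish local coordinates near $F(p)$, and in these coordinates $F$ is expressed through holomorphic quotient functions $f_{u_i}/f_{u_0}$, hence is holomorphic once one checks that the complex-linearity of $T$, as opposed to conjugate-linearity, forbids an anti-holomorphic dependence. The pointwise identity $|u(z)\wedge\overline{u(z)}|^{1/m}=|F^{\star}(Tu)(z)\wedge\overline{F^{\star}(Tu)(z)}|^{1/m}$ is then obtained by localizing the global isometry: the extremal characterization forces the two densities $|f_u|^{2/m}\,dV_X$ and $F^{\star}\bigl(|f_{Tu}|^{2/m}\,dV_Y\bigr)$ to agree as measures, which at the level of the pointwise pseudonorm is exactly the claimed equality. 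Uniqueness of $F$ follows because the sections separate points and $1$-jets, so any map satisfying the pointwise identity for all $u$ is forced to coincide with $F$.

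The main obstacle I expect is the passage from the global integral isometry to pointwise information, that is, the localization step. The functional $\int_X|u\wedge\overline u|^{1/m}$ is an $L^{2/m}$-type quasi-norm with exponent $2/m\le 1$, so none of the Hilbert-space polarization or inner-product techniques are available, and one cannot simply differentiate the isometry relation in a bilinear fashion. The substitute is the concentration of mass produced by the $L^{2/m}$-Ohsawa--Takegoshi extension theorem: peak sections let one read off pointwise densities as limits of global pseudonorms, and the delicate point is to show that $T$ respects these limits and that the extremal sections are genuinely characterized metrically rather than only up to the non-convex geometry of the unit ball. Establishing this stability of the extremal peak-section characterization under the isometry is, I expect, where the real work lies.
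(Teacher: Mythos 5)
There is a genuine gap, and it sits exactly at the step you yourself flag as "where the real work lies": the claim that $T$ carries the family of extremal (peak) sections at a point $z_0\in X$ to the family of extremal sections at a well-defined point of $Y$. This is not a delicate technical point to be checked at the end; it is the entire content of the theorem. Your extremal problem is \emph{not} phrased only through the pseudonorm and the linear structure: it requires the evaluation functional $u\mapsto u(z_0)$ (or the $1$-jet at $z_0$), and recovering precisely this evaluation structure from the pseudonorm data alone is what must be proved. A useful litmus test: for $m=1$ (exponent $p=2$) the statement is false --- any two infinite-dimensional spaces $A^2(\Omega_1)$, $A^2(\Omega_2)$ are isometrically isomorphic as Hilbert spaces (take the ball and the polydisc), yet no biholomorphism exists --- so any correct argument must use $2/m<2$ in an essential way. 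Nothing in your outline does: peak sections, jet interpolation via Ohsawa--Takegoshi, and the immersion into a projectivized dual space all exist equally well for $m=1$, so the argument as structured cannot succeed. The paper supplies the missing ingredient through Rudin's equimeasurability theorem for $L^p$-isometries with $p$ not an even integer: applying the hypothesis to combinations $u_0+\sum\alpha_k u_k$ and passing to quotients $u_k/u_0$ viewed in $L^{2/m}$ of the measure $|u_0\wedge\overline{u}_0|^{1/m}$, one concludes that the tuples $(u_k/u_0)$ and $(Tu_k/Tu_0)$ are equimeasurable. This is the localization device that converts the single global integral identity into pointwise and measure-theoretic information, and it is where $m\geq 2$ is used.

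A second, structural omission: even granting a point correspondence, it is only defined a priori on the set $X'$ of points $z$ for which $T(H_{X,z})$ is again a hyperplane of the form $H_{Y,w}$, where $H_{X,z}$ denotes the sections vanishing at $z$. The paper must work to show $X'=X$: it proves $X\setminus X'$ is closed pluripolar (using the exhaustivity of the $m$-th Bergman kernel on hyperconvex domains, which rests on Ohsawa's theorem and a localization principle via H\"ormander's $L^2$-estimate), extends $F$ across this pluripolar set by Demailly's extension theorem, and then uses the hyperconvexity of $Y$ together with the maximum principle to conclude the bad set is empty. In your proposal hyperconvexity is used only as an input to the extension theorem, which is not its role in this theorem (for the absolute case the supply of sections comes simply from restricting global pluricanonical sections of the ambient Stein manifold); the actual role of hyperconvexity --- Bergman-kernel exhaustivity and the maximum-principle argument killing the exceptional set --- is absent, and without it your map $F$ is not known to be defined, let alone holomorphic, on all of $X$.
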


We call a relatively compact domain $D$ in a Stein manifold $\widetilde{X}$ hyperconvex if there exists a negative plurisubharmonic function $\varphi $ on $D$ such that the set $\{ \varphi < c \}$ is relatively compact in $D$ for every $c<0$. 
We can easily see that a hyperconvex domain in $\mathbb{C}^n$ is pseudoconvex. 
We do not have a complete converse. 
However, many pseudoconvex domains become hyperconvex. 
For instance, it was proved that a pseudoconvex domain with Lipschitz boundary is hyperconvex \cite{Dem2}. 

%Throughout this paper, we define the space $A(X, mK_X)$ as 
%$$
%A(X, mK_X):= \{ u\in H^0(X, mK_X) \mid \int_X | u\wedge \overline{u}|^{1/m} < +\infty\},
%$$
%and 
%$$
%|u\wedge \overline{u}|^{1/m}:= ( \sqrt{-1}^{mn^2}u\wedge \overline{u} )^{1/m}
%$$
%for any $u\in H^0(X, mK_X)$ (see Definition \ref{def:l2m}). 

Theorem \ref{thm:stein} is based on and a natural generalization of the theorem obtained by Deng, Wang, Zhang and Zhou \cite{DWZZ} for bounded domains in $\mathbb{C}^n$. 
For any point in Stein manifold, there exist global holomorphic functions that define a coordinate around this point. 
By using this property, we can apply local results obtained by them to Stein cases.

We also investigate Stein manifolds fibered over a complex manifold  $T$ with $\dim T=r$. 
Here we recall a notion of a Stein morphism. 
Let $\widetilde{X}$ be an $r+n$-dimensional complex manifold and $f:\widetilde{X}\to T$ be a holomorphic map. 
In this paper, we say that {\it $f$ is a Stein morphism} if $f$ is a surjective and submersive map with connected fibers, and for any point $t\in T$, there exists an open 
neighborhood $U\subset T$ of $t$ such that $f^{-1}(U)$ is a Stein manifold. 
Then we introduce a notion of a relatively compact Stein morphism. 

\begin{definition}\label{def:relativefibration}
Let $\widetilde{f}:\widetilde{X}\to T$ be a Stein morphism over $T$, and $X\subset \widetilde{X}$ be an open submanifold in $\widetilde{X}$. 
We call $f:=\widetilde{f}|_X:X\to T$ a {\it relatively compact Stein morphism} of $\widetilde{X}$ if the following conditions are satisfied:\\
$({\rm i})$ The map $f$ is a Stein morphism in the above sense. \\
%$({\rm ii})$ There exists an inclusion map $\iota:X\to \widetilde{X}$ such that $f=\widetilde{f}\circ \iota$. \\
$({\rm ii})$ For each $t\in T$, $X_t:=f^{-1}(t)$ is a relatively compact domain in $\widetilde{X}_t:=\widetilde{f}^{-1}(t)$. 
\end{definition}

In this setting, we prove the following theorem. 

\begin{theorem}\label{thm:fibration}
Let $\widetilde{f}:\widetilde{X}\to B$ and $\widetilde{g}:\widetilde{Y}\to B$ be Stein morphisms over the open unit ball $B\subset \mathbb{C}^r$ with $\dim \widetilde{X}=r+n$ and $\dim \widetilde{Y}=r+l$. 
% and $X\Subset \widetilde{X}$ and $Y\Subset \widetilde{Y}$ be relatively compact Stein domains in $\widetilde{X}$ and $\widetilde{Y}$, respectively. 
Suppose that $f=(f_1, \cdots ,f_r):X\to B$ and $g=(g_1, \cdots , g_r):Y\to B$ are relatively compact Stein morphisms of $\widetilde{f}:\widetilde{X}\to B$ and $\widetilde{g}: \widetilde{Y}\to B$, respectively. 
In this local setting, we also let $\widetilde{X}, X, \widetilde{Y}$, and $Y$ be Stein.  
Assume that there exists $m\geq 2$ such that \\
$({\rm i})$ $X_t:=f^{-1}(t)$ and $Y_t:=g^{-1}(t)$ are hyperconvex, and \\
%$({\rm ii})$ the $m$-th Bergman kernels $K_{X_t, m}$ and $K_{Y_t, m}$ are exhaustive for each $t\in B$, and  \\
$({\rm ii})$ there exists a linear isomorphism 
$$
T:A(X, mK_{X}) \longrightarrow A(Y, mK_{Y})
$$
such that 
\begin{equation}
\displaystyle \int_{X_t} | U_t\wedge \overline{U_t} |^{1/m} = \int_{Y_t} | (TU)_t \wedge \overline{(TU)_t}|^{1/m} \label{eq:fiberint}
\end{equation}
for any $U\in A(X, mK_{X})$ and $t\in B$, where $U_t \in H^0(X_t, mK_{X_t})$ and $(TU)_t \in H^0(Y_t, mK_{Y_t})$ are uniquely determined $m$-canonical forms such that 
$$
U|_{X_t}=U_t\wedge (df_1\wedge \cdots \wedge df_r)^{\otimes m} , \hspace{5mm} TU|_{Y_t}=(TU)_t \wedge (dg_1\wedge \cdots \wedge dg_r)^{\otimes m}. 
$$
The equation (\ref{eq:fiberint}) includes the case $+\infty = + \infty$. 

Then we have that $T$ induces linear isometries $T_t: A(X_t, mK_{X_t})\to A(Y_t, mK_{Y_t})$, $n=l$, and there exists a unique biholomorphic map 
$F:X\to Y$ such that $f=g \circ F$ and the following equation is satisfied 
$$
|u(z)\wedge \overline{u(z)}|^{1/m}= |F_t^{\star}(T_tu)(z)\wedge \overline{F_t^{\star}(T_tu)(z)}|^{1/m}
$$
for $t\in B$, $z\in X_t$, and $u\in A(X_t, mK_{X_t})$, 
where 
$F_t := F|_{X_t}:X_t \to Y_t$ is a well-defined biholomorphic map. 
\end{theorem}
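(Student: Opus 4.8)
The plan is to reduce Theorem \ref{thm:fibration} to the absolute case, Theorem \ref{thm:stein}, applied fiber by fiber, and then to glue the resulting fiberwise biholomorphisms into a single holomorphic map on the total space. The three things to produce are: (a) the fiberwise linear isometries $T_t$; (b) for each $t$ a biholomorphism $F_t\colon X_t\to Y_t$ supplied by Theorem \ref{thm:stein}; and (c) the holomorphy of the assembled map $F$ in all variables. The $L^{2/m}$-variant of the Ohsawa--Takegoshi extension theorem enters precisely at the first step: it guarantees that every fiberwise section extends to a global section of controlled pseudonorm, which is what makes $T_t$ well defined on all of $A(X_t,mK_{X_t})$ and surjective.

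First I would construct $T_t$. Given $u\in A(X_t,mK_{X_t})$, the $L^{2/m}$ extension theorem produces $U\in A(X,mK_X)$ with $U_t=u$, and I set $T_t(u):=(TU)_t$. Well-definedness is immediate from hypothesis (\ref{eq:fiberint}): if $V\in A(X,mK_X)$ satisfies $V_t=0$ then $\int_{X_t}|V_t\wedge\overline{V_t}|^{1/m}=0$, so $\int_{Y_t}|(TV)_t\wedge\overline{(TV)_t}|^{1/m}=0$, and since the integrand is a nonnegative density this forces $(TV)_t=0$; hence $(TU)_t$ depends only on $u=U_t$. The same identity (\ref{eq:fiberint}) shows at once that $T_t$ preserves the fiber pseudonorm, so it is injective, and applying the extension theorem on the $Y$-side to $T^{-1}$ gives its surjectivity. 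Thus each $T_t\colon A(X_t,mK_{X_t})\to A(Y_t,mK_{Y_t})$ is a linear isometry.

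Next, each fiber $\widetilde{X}_t$, being a closed submanifold of a Stein open set $\widetilde{f}^{-1}(W)$ with $W\ni t$, is Stein, while $X_t\Subset\widetilde{X}_t$ and $Y_t\Subset\widetilde{Y}_t$ are hyperconvex and $T_t$ is an isometry, so Theorem \ref{thm:stein} applies and yields $n=\dim X_t=\dim Y_t=l$, a unique biholomorphism $F_t\colon X_t\to Y_t$, and the pointwise identity $|u(z)\wedge\overline{u(z)}|^{1/m}=|F_t^{\star}(T_tu)(z)\wedge\overline{F_t^{\star}(T_tu)(z)}|^{1/m}$ on $X_t$. I set $F|_{X_t}:=F_t$, so that $F\colon X\to Y$ is a fiber-preserving bijection with $g\circ F=f$, whose inverse is assembled from $(T^{-1})_t=(T_t)^{-1}$. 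To control $F$ I fix $U^{(0)}\in A(X,mK_X)$ with $U^{(0)}_t$ nonvanishing near a given point and form the holomorphic ratios $a_U:=U/U^{(0)}$ on $X$ and $b_{TU}:=TU/TU^{(0)}$ on $Y$, both linear in $U$. Dividing the two densities obtained from the fiberwise pointwise identity, applied to $U_t$ and to $U^{(0)}_t$, cancels the Jacobian of $F_t$ and gives $|a_U(z)|=|b_{TU}(F(z))|$ for every $U$ and every $z$ where $U^{(0)}$ is nonvanishing. Here is the key point: for fixed $z$ the maps $U\mapsto a_U(z)$ and $U\mapsto b_{TU}(F(z))$ are complex-linear functionals of equal modulus, hence proportional by a unimodular scalar $\lambda(z)$; evaluating at $U=U^{(0)}$ gives $1=\lambda(z)\cdot 1$, so $\lambda(z)\equiv 1$, and I obtain the phase-free identity $a_U(z)=b_{TU}(F(z))$ for all such $U$ and $z$.

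With this identity the holomorphy of $F$ follows quickly. Using the extension theorem I choose $U^{(1)},\dots,U^{(n)}\in A(X,mK_X)$ so that $\Phi:=(f_1,\dots,f_r,a_{U^{(1)}},\dots,a_{U^{(n)}})$ is a local biholomorphism near the given point; since $F_t$ is a biholomorphism intertwining $a_{U^{(j)}}$ with $b_{TU^{(j)}}$ along the fiber, the map $\Psi:=(g_1,\dots,g_r,b_{TU^{(1)}},\dots,b_{TU^{(n)}})$ is a local biholomorphism near the image point, which by the same pointwise identity is a point where $TU^{(0)}$ is nonvanishing. The relation $a_U(z)=b_{TU}(F(z))$ together with $g\circ F=f$ reads $\Psi\circ F=\Phi$, whence $F=\Psi^{-1}\circ\Phi$ is holomorphic near the point; as the point is arbitrary, $F$ is holomorphic on $X$. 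By symmetry $F^{-1}$ is holomorphic, so $F$ is biholomorphic, and fiberwise uniqueness in Theorem \ref{thm:stein} gives uniqueness of $F$. I expect the genuine obstacle to be the first step, namely establishing the $L^{2/m}$ Ohsawa--Takegoshi extension with a pseudonorm bound strong enough to keep the extended section inside $A(X,mK_X)$; the fractional exponent $2/m$ falls outside the standard $L^2$ theory, and everything after that is comparatively formal.
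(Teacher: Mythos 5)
Your steps (a) and (b) coincide with the paper's Lemma \ref{lemma:fiberwise} and with the opening of its proof of Theorem \ref{thm:fibration}, and they are fine. The gap is in the gluing step, at the sentence where you obtain $|a_U(z)|=|b_{TU}(F(z))|$ ``for every $U$ and every $z$'' by applying the fiberwise pointwise identity of Theorem \ref{thm:stein} to $U_t$ and $U^{(0)}_t$. That identity is a statement about elements of $A(X_t,mK_{X_t})$, and $T_t$ is only defined on $A(X_t,mK_{X_t})$, so the equality $(TU)_t=T_t(U_t)$ you are implicitly using also presupposes $U_t\in A(X_t,mK_{X_t})$. But for a global section $U\in A(X,mK_X)$ this membership can fail: Fubini gives $\int_{X_t}|U_t\wedge\overline{U_t}|^{1/m}<+\infty$ only for almost every $t$, and hypothesis (\ref{eq:fiberint}) explicitly allows the case $+\infty=+\infty$. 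The paper is careful about exactly this point --- its proof contains the remark that ``we do not know whether $(U_{j_k})_t\in A(X_t, mK_{X_t})$'' --- and its density argument (pseudonorm convergence implies locally uniform convergence, so the ratio relations defining $J_\infty^{-1}\circ I_\infty$ for a countable dense family pass to arbitrary $u\in A(X_t,mK_{X_t})$ via their Ohsawa--Takegoshi extensions) is structured so that the fiberwise theory is only ever applied to genuine elements of $A(X_t,mK_{X_t})$, never directly to restrictions of global sections. Your $U^{(0)},U^{(1)},\dots,U^{(n)}$ are extensions from the single fiber $X_{t_0}$, so their restrictions are controlled on $X_{t_0}$ only; on the nearby fibers $X_t$ --- which is where you need the identity in order to have $\Psi\circ F=\Phi$ on an open neighborhood of $z_0$ --- you get it only for $t$ outside a null set, and since $F$ is not yet known to be continuous you cannot promote ``almost every fiber'' to ``every fiber''. (Your unimodular-scalar trick is itself correct: two linear functionals with equal moduli have equal kernels, hence differ by a unimodular constant, killed by your normalization. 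But it operates on a space of sections where the modulus identity has not been proven.)

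There is a second, smaller gap at the end: from $\Psi\circ F=\Phi$ you conclude $F=\Psi^{-1}\circ\Phi$, but $\Psi$ is injective only near the image point $w_0$, and without a priori continuity of $F$ at $z_0$ you do not know that $F$ maps a neighborhood of $z_0$ into that injectivity neighborhood; $F(z)$ could a priori be a different preimage of $\Phi(z)$. The paper avoids this by matching the full infinite tuples $I_\infty$ and $J_\infty$: the separation property makes $J_\infty$ globally injective, so $F(z)$ is pinned down by the ratio equations before any local inversion is attempted, and holomorphy is then read off from finitely many ratios. Finally, your closing remark mislocates the difficulty: the $L^{2/m}$ extension theorem (Theorem \ref{thm:lpohsawa}) is quoted by the paper from the literature (\cite{BP}, \cite{GZ}, \cite{HPS}, \cite{PT}), while the genuinely delicate part of the argument is precisely the gluing step you describe as ``comparatively formal''.
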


The fiberwise uniqueness of $F$ implies the following theorem in the global setting. 

\begin{theorem}\label{thm:hariawase}
Let $\widetilde{X}$ and $\widetilde{Y}$ be complex manifolds with $\dim \widetilde{X}=r+n$ and $\dim \widetilde{Y}=r+l$, and 
$\widetilde{f}:\widetilde{X}\to T$ and $\widetilde{g}:\widetilde{Y}\to T$ be Stein morphisms over an $r$-dimensional complex manifold $T$. 
Suppose that $f:X\to T$ and $g:Y\to T$ are relatively compact Stein morphisms of $\widetilde{f}$ and $\widetilde{g}$, respectively. 
Assume that there exists $m\geq 2$ such that \\
$({\rm i})$ $X_t$ and $Y_t$ are hyperconvex, and \\
%$({\rm ii})$ the $m$-th Bergman kernels $K_{X_t, m}$ and $K_{Y_t, m}$ are exhaustive for each $t\in T$, and  \\
$({\rm ii})$ there exists an isomorphism of sheaves 
$$
T: f_\star(mK_{X/T}) \to g_\star(mK_{Y/T})
$$
which satisfies the following conditions: \\
For any open set $D\subset T$, there is a linear isomorphism 
$$
T_D: f_\star(mK_{X/T})(D)\to g_\star(mK_{Y/T})(D)
$$
such that 
\begin{equation}
\displaystyle \int_{X_t} | U_t\wedge \overline{U_t} |^{1/m} = \int_{Y_t} | (T_DU)_t \wedge \overline{(T_DU)_t}|^{1/m} \label{eq:globalint}
\end{equation}
for any $U\in f_\star(mK_{X/T})(D)$ and $t\in D$.%, where $U|_{X_t}=U_t \wedge (df_1\wedge \cdots \wedge df_r)^{\otimes m}$ and $(T_DU)|_{Y_t}=(T_DU)_t\wedge (dg_1\wedge \cdots \wedge dg_r)^{\otimes m}$ for some fixed coordinate $(t_1, \cdots , t_r)\subset D$ around $t$ and $f_i=t_i\circ f, g_i=t_i\circ g$. 
The equation (\ref{eq:globalint}) includes that case $+\infty = +\infty$. 

Then we have that $T$ induces linear isometries $T_t: A(X_t, mK_{X_t})\to A(Y_t, mK_{Y_t})$, $n=l$, and there exists a unique biholomorphic map $F:X\to Y$ such that $f=g\circ F$ and the following equation is satisfied 
$$
|u(z)\wedge \overline{u(z)}|^{1/m}= |F_t^{\star}(T_tu)(z)\wedge \overline{F_t^{\star}(T_tu)(z)}|^{1/m}
$$
for $t\in D$, $z\in X_t$, and $u\in A(X_t, mK_{X_t})$, 
where 
$F_t := F|_{X_t}:X_t \to Y_t$ is a well-defined biholomorphic map. 

\end{theorem}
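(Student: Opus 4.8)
\medskip

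The plan is to deduce Theorem \ref{thm:hariawase} from the local fibered statement, Theorem \ref{thm:fibration}, by covering the base $T$ with balls and gluing the resulting fiberwise biholomorphisms; the decisive ingredient will be the fiberwise uniqueness supplied by Theorem \ref{thm:stein}. Since $T$ is an $r$-dimensional complex manifold, I would first fix an open cover $\{D_\alpha\}_\alpha$ of $T$ such that each $D_\alpha$ is biholomorphic to the open unit ball $B\subset\mathbb{C}^r$. The restriction of a Stein morphism over $T$ to $\widetilde{f}^{-1}(D_\alpha)$ is again a Stein morphism over $D_\alpha\cong B$: for $t\in D_\alpha$ the preimage of a small ball is the preimage of a Stein open set under a holomorphic map from a Stein manifold, hence Stein, so the defining local condition is inherited. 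Likewise $f,g$ restrict to relatively compact Stein morphisms over $D_\alpha$, and the fibers $X_t,Y_t$ with $t\in D_\alpha$ remain hyperconvex by hypothesis (i). Thus over each $D_\alpha$ the geometric hypotheses of Theorem \ref{thm:fibration} are in force.

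\medskip

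On each chart I would feed in the algebraic data coming from the sheaf isomorphism $T$. By definition $T$ provides the linear isomorphism $T_{D_\alpha}:H^0(f^{-1}(D_\alpha),mK_X)\to H^0(g^{-1}(D_\alpha),mK_Y)$ satisfying the fiber identity (\ref{eq:globalint}). Restricting $T_{D_\alpha}$ to the finite-pseudonorm subspaces gives a linear isomorphism $A(f^{-1}(D_\alpha),mK_X)\to A(g^{-1}(D_\alpha),mK_Y)$: that it preserves finiteness of the global pseudonorm in both directions follows from (\ref{eq:globalint}) together with Fubini's theorem, since the total pseudonorm over $D_\alpha$ is the integral over $t\in D_\alpha$ of the fiber pseudonorms, which $T_{D_\alpha}$ preserves. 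This is exactly hypothesis (ii) of Theorem \ref{thm:fibration}, so that theorem yields, for each $\alpha$, the equality $n=l$, induced fiber isometries $T_t$ for $t\in D_\alpha$, and a unique biholomorphism $F_\alpha:f^{-1}(D_\alpha)\to g^{-1}(D_\alpha)$ with $f=g\circ F_\alpha$ and
$$
|u(z)\wedge\overline{u(z)}|^{1/m}=|(F_\alpha)_t^{\star}(T_tu)(z)\wedge\overline{(F_\alpha)_t^{\star}(T_tu)(z)}|^{1/m}
$$
for all $t\in D_\alpha$, $z\in X_t$, and $u\in A(X_t,mK_{X_t})$.

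\medskip

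The core of the argument is the gluing. Because $T$ is a morphism of sheaves, the isomorphisms $T_{D_\alpha}$ commute with restriction, so the induced fiber isometry $T_t$ attached to a point $t$ is independent of the chart $D_\alpha\ni t$ used to produce it; hence the family $\{T_t\}_{t\in T}$ is globally well defined. Now fix $t\in D_\alpha\cap D_\beta$. Both $(F_\alpha)_t$ and $(F_\beta)_t$ are biholomorphisms $X_t\to Y_t$ realizing the pointwise pseudonorm identity associated with the same isometry $T_t$, so by the uniqueness clause of Theorem \ref{thm:stein} applied to the fiber $X_t$ they coincide. As $X=\bigcup_t X_t$ and a map is determined by its fiber restrictions, $F_\alpha=F_\beta$ on $f^{-1}(D_\alpha\cap D_\beta)$. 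Therefore the $F_\alpha$ patch to a single holomorphic map $F:X\to Y$; it is biholomorphic because each $F_\alpha$ is a biholomorphism onto $g^{-1}(D_\alpha)$ and these cover $Y$ (the same gluing applied to the $F_\alpha^{-1}$ produces a global inverse), it satisfies $f=g\circ F$ and the stated pointwise identity chartwise, and $F_t:=F|_{X_t}=(F_\alpha)_t$ is the fiberwise biholomorphism. Global uniqueness of $F$ follows once more from the fiberwise uniqueness of Theorem \ref{thm:stein}: any competitor has the same uniquely determined fiber restrictions.

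\medskip

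I expect the genuine difficulty to lie not in any new analysis but in the bookkeeping that legitimizes the gluing. The key point is that the hypothesis is a bona fide \emph{sheaf} isomorphism rather than a mere collection of fiberwise compatible maps: this is precisely what guarantees that the fiber isometry $T_t$ produced over different charts is literally the same map, and hence that the chartwise-unique biholomorphisms agree on overlaps. All of the hard work — the $L^{2/m}$ Ohsawa--Takegoshi extension used to show that $T_t$ is well defined fiberwise and the construction of the fiberwise biholomorphism over a ball — has already been carried out in the proofs of Theorems \ref{thm:stein} and \ref{thm:fibration}, which I am free to invoke.
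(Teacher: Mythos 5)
Your proposal is correct and follows essentially the same route as the paper's own proof: localize over coordinate balls in $T$, use Fubini's theorem to turn $T_{D}$ into an isomorphism of the finite-pseudonorm spaces so that Theorem \ref{thm:fibration} applies chartwise, and glue the resulting $F_\alpha$ using the fact that the sheaf morphism $T$ induces chart-independent fiber isometries $T_t$, whose associated fiberwise biholomorphisms are unique by Theorem \ref{thm:stein}. Your explicit verification that the restricted morphisms remain Stein morphisms over $D_\alpha\cong B$ (via preimages of Stein open sets under maps from Stein manifolds) is a detail the paper leaves implicit, but otherwise the arguments coincide.
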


Theorems \ref{thm:fibration} and \ref{thm:hariawase} say that the $L^{2/m}$-norms on direct images of pluricanonical bundles determine holomorphic structures of fibrations. 
The so-called $m$-th Narasimhan-Simha Hermitian metric on direct images of relative pluricanonical bundles has been studied by several people (cf. \cite{BP08}, \cite{HPS}, \cite{PT}).
The above theorems also demonstrate the importance of the $m$-th pseudonorms on them. 

For bounded domains in $\mathbb{C}^n$, 
%the pseudoconvexity of them implies the exhaustivity of the $m$-th Bergman kernel on them (cf. Theorem \ref{thm:nzz}). 
%Hence, from Theorem \ref{thm:fibration}, 
we obtain the following corollary. 

\begin{corollary}\label{cor:domainfibration}
Let $X\Subset \mathbb{C}^r_t \times \mathbb{C}^n_z$ and 
$Y\Subset \mathbb{C}^r_t\times \mathbb{C}^l_w $ be bounded pseudoconvex domains fibered over the open unit ball $B \subset \mathbb{C}^r$. 
We also let $f:X\to \mathbb{C}^r$ and $g:Y\to \mathbb{C}^r$ be natural projections such that $f(t, z)=t$ and $g(s, w)=s$ with $f(X)=g(Y)=B$. 
Assume that there exists $0<p<2$ such that \\
$({\rm i})$ $X_t:=f^{-1}(t)$ and $Y_t:=g^{-1}(t)$ are hyperconvex domains for each $t\in B$, \\
$({\rm ii})$ there exists a linear isomorphism 
$$
T : A^p(X)\longrightarrow A^p(Y)
$$
with 
$$
\int_{X_t}| \Phi|_{X_t} |^p d\mu_n = \int_{Y_t}|(T\Phi)|_{Y_t} |^p d\mu_l
$$
for any $\Phi \in A^p(X)$ and $t\in B$, where $d\mu_n$ and $d\mu_l$ are the standard Lebesgue measures on $\mathbb{C}^n$ and $\mathbb{C}^l$, respectively. 
Then we have that $T$ induces linear isometries $T_t: A^p(X_t)\to A^p(Y_t)$, $n=l$, and there exists a unique biholomorphic map 
$F:X\to Y$ such that $f=g \circ F$ and the following equation is satisfied
$$
|\phi(z)|=|T_t\phi(F_t(z))||J_{F_t}(z)|^{2/p}
$$ 
for $t\in B$, $z\in X_t$, and $\phi \in A^p(X_t)$, where $F_t:=F|_{X_t}:X_t\to Y_t$ is a well-defined biholomorphic map and $J_{F_t}$ is the holomorphic Jacobian of $F_t$.  
%Moreover, an induced biholomorphic map $F(t, \cdot):X_t\to Y_t$ satisfies the following equation
%$$
%|T_t\phi \circ F(t, \cdot)||J_{F(t, \cdot)}|^{\frac{2}{p}}=|\phi|, 
%$$
%where $T_t$ is a linear isometry induced by $T$ (see Lemma \ref{lemma:fiberwise}), $\phi \in A^p(X_t)$, and $J_{F(t, \cdot)}$ is the holomorphic Jacobian of $F(t, \cdot)$. 
\end{corollary}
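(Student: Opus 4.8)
The plan is to transport the $A^p$-data into the relative pluricanonical framework of Theorem \ref{thm:fibration}. Since $X\Subset \mathbb{C}^r_t\times\mathbb{C}^n_z$ is a domain, its canonical bundle is trivialized by $dt_1\wedge\cdots\wedge dt_r\wedge dz_1\wedge\cdots\wedge dz_n$, so each holomorphic function $\Phi$ on $X$ determines a holomorphic $m$-canonical form
$$
U_\Phi:=\Phi\cdot(dt_1\wedge\cdots\wedge dt_r\wedge dz_1\wedge\cdots\wedge dz_n)^{\otimes m}\in H^0(X,mK_X).
$$
With $f_i=t_i$ one gets $(U_\Phi)_t=\Phi|_{X_t}(dz_1\wedge\cdots\wedge dz_n)^{\otimes m}$, hence $\int_{X_t}|(U_\Phi)_t\wedge\overline{(U_\Phi)_t}|^{1/m}=\int_{X_t}|\Phi|_{X_t}|^{2/m}\,d\mu_n$. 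The assignment $\Phi\mapsto U_\Phi$ is linear, so in the arithmetically special case $p=2/m$ with an integer $m\ge 2$ it is a linear isometry identifying $A^p(X)$ with $A(X,mK_X)$, compatibly with the fibration, and likewise on the $Y$-side. Thus $T$ induces a linear isomorphism $\widetilde T:A(X,mK_X)\to A(Y,mK_Y)$ satisfying the integral identity (\ref{eq:fiberint}), and Theorem \ref{thm:fibration} applies directly.

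Reading its conclusion back through this dictionary yields the corollary when $p=2/m$. Writing $\phi:=T_t\Phi|_{X_t}$ and using the transformation law $F_t^\star((dw)^{\otimes m})=J_{F_t}^{\,m}(dz)^{\otimes m}$, the pointwise identity
$$
|(U_\Phi)_t\wedge\overline{(U_\Phi)_t}|^{1/m}=|F_t^\star(\widetilde T_t(U_\Phi)_t)\wedge\overline{F_t^\star(\widetilde T_t(U_\Phi)_t)}|^{1/m}
$$
becomes $|\phi|^{2/m}=|T_t\phi\circ F_t|^{2/m}|J_{F_t}|^{2}$, i.e.\ $|\phi|=|T_t\phi\circ F_t|\,|J_{F_t}|^{2/p}$ since $m=2/p$. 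The equality $n=l$, the fiberwise isometries $T_t$, and the existence and uniqueness of the biholomorphism $F$ with $f=g\circ F$ are inherited verbatim from Theorem \ref{thm:fibration}.

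It then remains to remove the restriction $p=2/m$. For a general $p\in(0,2)$ I would rerun the proof of Theorem \ref{thm:fibration}, but feeding in the fiberwise theorem of Deng--Wang--Zhang--Zhou \cite{DWZZ}, which holds for every $0<p<2$ and supplies, for each fixed $t$, a biholomorphism $F_t:X_t\to Y_t$ together with $|\phi(z)|=|T_t\phi(F_t(z))|\,|J_{F_t}(z)|^{2/p}$. For a bounded domain the integer constraint on $m$ is an artifact of the pluricanonical bundle being nontrivial, which is not the case here; the genuinely global part of the argument — that $T$ respects the relatively compact Stein morphism and so induces $T_t:A^p(X_t)\to A^p(Y_t)$, that the fiberwise maps $F_t$ assemble into a single $F:X\to Y$ holomorphic in all variables with $f=g\circ F$, and the global uniqueness — does not see the particular value of $p$, so it transfers without change.

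The main obstacle is exactly this fibration step: upgrading the fiberwise rigidity into holomorphic dependence on the base parameter $t\in B$ and verifying compatibility of $T$ with the direct-image structure so that the $T_t$ are well defined. Concretely, one must show that $F_t(z)$ is holomorphic jointly in $(t,z)$ — for instance by realizing $F_t$ through ratios of pseudonorms of a family of sections depending holomorphically on $t$, as in the proof of Theorem \ref{thm:fibration} — and that $n=l$ holds uniformly in $t$. Once this holomorphic dependence is secured, the change-of-variables computation above produces the Jacobian factor $|J_{F_t}|^{2/p}$ and completes the proof.
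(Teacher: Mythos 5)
Your argument is essentially correct, but it follows a genuinely different route from the paper's own proof, which is the short direct argument in Appendix \ref{app:direct}. Your plan --- reduce to Theorem \ref{thm:fibration} through the trivialization $\Phi\mapsto \Phi\,(dt_1\wedge\cdots\wedge dt_r\wedge dz_1\wedge\cdots\wedge dz_n)^{\otimes m}$ when $p=2/m$, and for general $p\in(0,2)$ rerun the proof of Theorem \ref{thm:fibration} with the fiberwise rigidity supplied by Theorem \ref{thm:DWZZ} --- is precisely the alternative that the introduction of the paper mentions but does not write out. The appendix proof instead exploits the bounded-domain hypothesis to skip the globalization machinery entirely (the countable dense families, the maps $I_\infty, J_\infty$, and the local coordinates $\eta_{r+j}$ chosen near each point of $Y$): since $Y\Subset\mathbb{C}^r_t\times\mathbb{C}^n_w$, the functions $\Psi_0=1,\Psi_1=w_1,\dots,\Psi_n=w_n$ already lie in $A^p(Y)$, so one sets $\Phi_j:=T^{-1}(\Psi_j)$ and defines $F:=(\Phi_1/\Phi_0,\dots,\Phi_n/\Phi_0)$ globally on $X$; Theorem \ref{thm:DWZZ} gives the explicit fiberwise formula $F_t=(\phi_1/\phi_0,\dots,\phi_n/\phi_0)$ with $\phi_j=T_t^{-1}(w_j|_{Y_t})$, and compatibility of $T_t$ with restriction yields $\Phi_j|_{X_t}=\phi_j$, hence $F|_{X_t}=F_t$ and $F$ is the desired biholomorphism with $f=g\circ F$. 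The paper's route buys brevity --- global holomorphy of $F$ is immediate, with no density or injectivity argument; your route buys uniformity with the Stein-morphism setting and makes the $p=2/m$ case a literal corollary of Theorem \ref{thm:fibration}.

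One step in your write-up is mislocated and needs repair. You place the construction of the fiberwise isometries $T_t$ in the ``global part that does not see the particular value of $p$,'' but this is exactly the step that does see $p$: Lemma \ref{lemma:fiberwise} rests on Theorem \ref{thm:lpohsawa}, which is stated only for exponents $2/m$, and both the well-definedness of $T_t$ on all of $A^p(X_t)$ and its surjectivity require extending arbitrary elements of $A^p(X_t)$ and $A^p(Y_t)$ to global $A^p$ functions with uniform norm control. Your remark that the integrality of $m$ is an artifact of the nontrivial pluricanonical bundle is true in spirit --- for functions on bounded pseudoconvex fibered domains such $L^p$ extension holds for every $p\in(0,2)$, using the plurisubharmonic variation of the fiberwise $p$-Bergman kernels (cf. \cite{DWZZ1}, \cite{NZZ}) together with the Ohsawa-Takegoshi theorem --- but it is an input that must be cited or proved, not something inherited ``without change''; note that the paper's appendix proof relies on the same input through its reference to Lemma \ref{lemma:fiberwise}. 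Two minor points: ``ratios of pseudonorms'' should read ``ratios of sections,'' and in the $p=2/m$ reduction you should verify that $f$ and $g$ are relatively compact Stein morphisms in the sense of Definition \ref{def:relativefibration} (take $\widetilde{X}=B\times\mathbb{C}^n_z$; the hyperconvex fibers are connected, and $f^{-1}(U)=X\cap(U\times\mathbb{C}^n_z)$ is a bounded pseudoconvex, hence Stein, domain), so that Theorem \ref{thm:fibration} genuinely applies.
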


Theorem \ref{thm:fibration} holds only for $p=2/m$, whereas we can prove Corollary \ref{cor:domainfibration} for all $0<p<2$ by using the same argument of the proof of Theorem \ref{thm:fibration}. 
Corollary \ref{cor:domainfibration} is a relative version of Theorem 1.2 in \cite{DWZZ}. 
%In Theorem \ref{thm:fibration}, \ref{thm:hariawase}, and Corollary \ref{cor:domainfibration}, we do not assume the hyperconvexity of $X$, whereas we can construct the biholomorphic map between total spaces.  
%This is an important and new point. 
A key proposition to prove the main theorems is an $L^{2/m}$-variant of the Ohsawa-Takegoshi extension theorem. 

On the other hand, we can prove Corollary \ref{cor:domainfibration} without using the $L^{2/m}$-variant of the Ohsawa-Takegoshi extension theorem. 
We will show the proof in Section \ref{app:direct}. 

The organization of this paper is as follows. In Section \ref{sec:prelim}, we introduce some definitions and properties of $L^{2/m}$-norms and $m$-th Bergman kernels. 
In Section \ref{sec:absolute}, we give a proof of Theorem \ref{thm:stein}. 
In Section \ref{sec:relative}, we prove Theorems \ref{thm:fibration}, \ref{thm:hariawase}, and Corollary \ref{cor:domainfibration}.
At last, in Section \ref{app:direct}, we show a simple proof of Corollary \ref{cor:domainfibration}.

\vskip10mm
{\bf Acknowledgment. }
The author would like to thank Prof. Shigeharu Takayama for enormous supports. 
He is also grateful to Prof. Fusheng Deng and Prof. Takeo Ohsawa for answering questions and helpful comments. 
He also wishes to express his gratitude to the anonymous referee for their suggestions to improve the manuscript. 
%This work was supported by the Program for Leading Graduate Schools, MEXT, Japan. 
%This work was also supported by JSPS KAKENHI Grant Number 18J22119. 
The author is supported by Japan Society for the Promotion of Science, Grant-in-Aid for Research Activity Start-up (Grant No. 21K20336).

\section{Preliminaries}\label{sec:prelim}
\subsection{$L^{2/m}$-norm}

We prepare some basic definitions and properties to show the main theorem. 
Throughout this section, we denote by $X$ an $n$-dimensional complex manifold, and by $K_X$ the canonical line bundle over $X$.   

Firstly, we confirm the following notation. 

\begin{definition}[$L^{2/m}$-norm]\label{def:l2m}
We take a local coordinate $\{U, (z_1, \cdots, z_n )\}$ on $X$. 
For a holomorphic section $u\in H^0(X, mK_X)$, we locally define $(\sqrt{-1}^{mn^2}u\wedge \overline{u})^{1/m}$ as 
$$
(\sqrt{-1}^{mn^2}u\wedge \overline{u})^{1/m}= |f_U|^{2/m}\sqrt{-1}^{n^2}(dz_1\wedge\cdots \wedge dz_n) \wedge (d\overline{z}_1 \wedge \cdots \wedge d\overline{z}_n), 
$$
where $u= f_U(z_1, \cdots , z_n) (dz_1\wedge\cdots \wedge dz_n)^{\otimes m}$ on $U$. 
We can verify that $(\sqrt{-1}^{mn^2}u\wedge \overline{u})^{1/m}$ gives a globally defined real non-negative $(n, n)$-form. 
For simplicity, we define $|u\wedge \overline{u}|^{1/m} := (\sqrt{-1}^{mn^2}u\wedge \overline{u})^{1/m}$. 
Then, we can define the $L^{2/m}$-norm $\| \cdot \|_{X, m}$ of $u$ as 
$$
\| u\|_{X, m}= \int_X |u\wedge \overline{u}|^{1/m}. 
$$
\end{definition}

%It is well-known that $(A^p(\Omega), \| \cdot \|_{p, \Omega})$ are separable Banach spaces for $p\geq 1$. 
%If $0<p<1$, $\| \cdot \|^p_{p, \Omega}$ become only pseudonorms, i.e. they satisfy the norm axioms except the homogeneity. 
%However, we call them $L^p$-norm for all $p\in\mathbb{R}_{>0}$. 
%We know that if $0<p<1$, $(A^p(\Omega),d(\phi_1, \phi_2):=\| \phi_1-\phi_2\|^p_{p, \Omega})$ become complete separable metric spaces. 

For $m>2$, $\| \cdot \|_{X, m}$ become only pseudonorms, i.e. they satisfy the norm axioms except the homogeneity. 
However, we call them $L^{2/m}$-norms for all $m\in\mathbb{N}$. 

We remark that the space $A(X, mK_X)$ might be $\{ 0\}$ even though $X$ is a Stein manifold. 
Hence, in this paper, we mainly consider the space $A(X, mK_X)$ over a relatively compact Stein domain in Stein manifold. 
If $X$ is a relatively compact Stein domain in some Stein manifold $\widetilde{X}$, $A(X, mK_X)$ is an infinite-dimensional vector space and has the separation property since $H^0(\widetilde{X}, mK_{\widetilde{X}})|_X \subset A(X, mK_X)$ and $\widetilde{X}$ is Stein. 
Here the separation property means that for any points $x\neq y\in X$, there exist sections $\sigma_1, \sigma_2 \in A(X, mK_X)$ such that $\sigma_1(x)=0, \sigma_1(y)\neq 0, \sigma_2(x)\neq 0, \sigma_2(y)=0$. 
We also know that $A(X, mK_X)$ is a complete separable metric space with respect to the metric $d(u_1, u_2):= \| u_1-u_2\|_{X, m}$ for $m\geq 2$.

%\begin{theorem}[$L^p$-extension theorem]\label{thm:lpohsawa}
%Let $\Omega \subset \mathbb{C}^n$ be a bounded pseudoconvex domain, $L \subset \mathbb{C}^n$ be a complex affine line such that $\Omega \cap L \neq \emptyset $, and $0<p\leq 2$. 
%For any $\phi\in A^p(\Omega \cap L)$, there exists $\Phi\in A^p(\Omega)$ such that $\Phi|_{\Omega \cap L} = \phi$ and 
%$$
%\int_\Omega |\Phi|^p \leq C\int_{\Omega \cap L} |\phi|^p
%$$
%for a positive constant $C>0$ depending only on $diam\Omega$ and $n$. 
%\end{theorem}

A fundamental lemma to prove the main theorems is the following result about isometries between $L^p$-spaces. 

\begin{theorem}$($\cite{Rudin}$)$\label{thm:rudin}
Let $\mu$ and $\nu$ be finite positive measures on two sets $U$ and $V$. 
We also let $p\in \mathbb{R}_{>0}$ be not even, and $N$ be a positive integer. 
If $\{ f_i\}_{1\leq i \leq N} \subset L^p(U, \mu), \{g_i\}_{1\leq i \leq N}\subset L^p(V, \nu)$  satisfy 
$$
\int_U |1+\sum_{1\leq i \leq N}\alpha_i f_i|^p d\mu= \int_V |1+ \sum_{1\leq i \leq N}\alpha_i g_i|^p d\nu 
$$
for all $(\alpha_1, \cdots, \alpha_N)\in \mathbb{C}^N$, then $(f_1, \cdots, f_N)$ and $(g_1, \cdots, g_N)$ are called equimeasurable, i.e. 
for every real-valued non-negative Borel function $u:\mathbb{C}^N\to \mathbb{R}_{\geq 0}$, we get 
$$
\int_U u(f_1, \cdots , f_N) d\mu = \int_V u(g_1, \cdots , g_N) d\nu .
$$
Moreover, let $I:X\to \mathbb{C}^N$ and $J:Y\to \mathbb{C}^N$ be the maps $I=(f_1, \cdots, f_N)$ and $J=(g_1, \cdots, g_N)$. Then we obtain 
$$
\mu(I^{-1}(E))=\nu(J^{-1}(E))
$$
for every Borel set $E\subset \mathbb{C}^N$. 
\end{theorem}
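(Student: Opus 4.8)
The plan is to transport everything to a single copy of $\mathbb{C}^N$ and show that the hypothesis forces two pushforward measures to coincide. First I would form the image measures $\sigma := I_\star \mu$ and $\tau := J_\star \nu$ on $\mathbb{C}^N$, where $I=(f_1,\dots,f_N)$ and $J=(g_1,\dots,g_N)$; both are finite positive Borel measures since $\mu,\nu$ are finite. By the change-of-variables formula the hypothesis becomes
$$
\int_{\mathbb{C}^N} |1+\langle\alpha,\zeta\rangle|^p\, d\sigma(\zeta) = \int_{\mathbb{C}^N} |1+\langle\alpha,\zeta\rangle|^p\, d\tau(\zeta) \quad \text{for all } \alpha\in\mathbb{C}^N,
$$
where $\langle\alpha,\zeta\rangle=\sum_i \alpha_i\zeta_i$. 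The whole theorem then reduces to proving $\sigma=\tau$: once this is known, equimeasurability is immediate, since $\int_U u\circ I\, d\mu = \int u\, d\sigma = \int u\, d\tau = \int_V u\circ J\, d\nu$, and the identity $\mu(I^{-1}(E))=\nu(J^{-1}(E))$ is just the special case $\sigma(E)=\tau(E)$.

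The engine of the argument is the point where the hypothesis that $p$ is \emph{not} an even integer enters. Writing $|1+w|^p = (1+w)^{p/2}(1+\bar w)^{p/2}$ and expanding for $|w|<1$ gives $|1+w|^p = \sum_{j,k\ge 0}\binom{p/2}{j}\binom{p/2}{k} w^j\bar w^k$, and because $p/2$ is not a non-negative integer, \emph{every} coefficient $\binom{p/2}{j}\binom{p/2}{k}$ is nonzero. Substituting $w=\langle\alpha,\zeta\rangle$, integrating, and comparing the two sides, the nonvanishing of these coefficients should let me peel off equality of the mixed moments $\int \langle\alpha,\zeta\rangle^j\,\overline{\langle\alpha,\zeta\rangle}^k\, d\sigma = \int \langle\alpha,\zeta\rangle^j\,\overline{\langle\alpha,\zeta\rangle}^k\, d\tau$; expanding in $\alpha$ and matching coefficients then yields $\int \zeta^\beta\bar\zeta^\gamma\, d\sigma = \int \zeta^\beta\bar\zeta^\gamma\, d\tau$ for all multi-indices $\beta,\gamma$. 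Since the monomials $\zeta^\beta\bar\zeta^\gamma$ are precisely the polynomials in $\mathrm{Re}\,\zeta_i,\mathrm{Im}\,\zeta_i$, matching all of them pins down the measure and gives $\sigma=\tau$.

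I expect the main obstacle to be rigor in exactly this moment step, not the bookkeeping around it. Two difficulties arise. First, the expansion of $|1+w|^p$ converges only for $|w|<1$, whereas the integrals run over all of $\mathbb{C}^N$; and because $f_i,g_i$ lie only in $L^p$, the measures $\sigma,\tau$ need not have finite moments of all orders, so neither term-by-term integration nor the final moment identities are automatically meaningful. Second, even granting all moments, a finite measure on $\mathbb{R}^{2N}$ is not in general determined by them. The honest route is therefore to treat $\alpha\mapsto \int|1+\langle\alpha,\zeta\rangle|^p\,d\sigma$ as a function on $\mathbb{C}^N$, exploit its regularity away from the singular locus together with a truncation of $\sigma$ to compact sets (controlling tails through finiteness of $\int|\zeta|^p\,d\sigma$), and ultimately reduce determinacy to the fact that the Fourier transform of $|w|^p$ is, for non-even $p$, a nonvanishing homogeneous distribution rather than a finite combination of derivatives of $\delta$ (the latter being what occurs when $p$ is even and $|w|^p$ is a polynomial). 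This analytic input is exactly what the hypothesis on $p$ buys and is the technical heart of the statement; I would quote it from \cite{Rudin}.
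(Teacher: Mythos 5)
The paper contains no proof of this theorem at all: it is stated as a direct quotation of Rudin's result \cite{Rudin}, so the paper's ``proof'' is the citation itself. Your proposal---reducing both stated conclusions to the single identity $\sigma=\tau$ of the pushforward measures $\sigma:=I_{\star}\mu$, $\tau:=J_{\star}\nu$ via the change-of-variables formula, and then quoting Rudin for the essential implication that the integral identities force $\sigma=\tau$ when $p$ is not an even integer---is correct and in substance the same as the paper's treatment; the intermediate moment-expansion ``engine'' you sketch would indeed not yield a self-contained proof (for exactly the reasons you identify: the expansion of $|1+w|^p$ converges only for $|w|<1$, the measures need not have finite higher moments, and finite measures on $\mathbb{C}^N$ are not determined by their moments in general), but since you ultimately defer that step to Rudin, precisely as the paper does, no gap remains.
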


This theorem implies the next lemma, which is necessary for us to prove the main theorems. This is a version of Lemma 2.1 in \cite{Marko}, and of Lemma 2.2 in \cite{DWZZ}. 

\begin{lemma}\label{lem:rudin2}
Let $X$ and $Y$ be two relatively compact domains in an $n$-dimensional Stein manifold $\widetilde{X}$ and an $l$-dimensional Stein manifold $\widetilde{Y}$, respectively. 
Suppose that $\{ u_k\}_{k=0}^{N} \subset A(X, mK_X), \{v_k\}_{k=0}^N \subset A(Y, mK_Y)$ are pluricanonical sections such that for every $N$-tuple of complex numbers $\{ \alpha_k\}_{k=1}^N$, we have 
$$
\int_X  |(u_0+\sum_{1\leq k \leq N}\alpha_k u_k)\wedge (\overline{u_0+\sum_{1\leq k \leq N} \alpha_k u_k})|^{1/m}
$$
$$
=\int_Y |(v_0+\sum_{1\leq k \leq N}\alpha_k v_k)\wedge (\overline{v_0+\sum_{1\leq k \leq N} \alpha_k v_k})|^{1/m}. 
$$
If neither $u_0$ nor $v_0$ is constantly zero, then for every real-valued non-negative Borel function $f:\mathbb{C}^N\to \mathbb{R}_{\geq 0}$, we have 
$$
\int_X f\left( \frac{u_1}{u_0}, \cdots , \frac{u_N}{u_0}\right) |u_0 \wedge \overline{u}_0|^{1/m} = \int_Y f\left( \frac{v_1}{v_0}, \cdots , \frac{v_N}{v_0}\right) |v_0 \wedge \overline{v}_0|^{1/m}. 
$$
Here we regard $u_k /u_0$ and $v_k / v_0$ as functions on $X$ and $Y$, respectively. 
\end{lemma}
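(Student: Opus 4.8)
The plan is to recast the hypothesis into the exact form required by Theorem~\ref{thm:rudin} and then read off the conclusion as the resulting equimeasurability statement. Set $p := 2/m$ and introduce the finite positive measures
$$
d\mu := |u_0\wedge \overline{u}_0|^{1/m}, \qquad d\nu := |v_0\wedge \overline{v}_0|^{1/m},
$$
on $X$ and $Y$ respectively, which are genuine globally defined non-negative $(n,n)$- and $(l,l)$-forms by Definition~\ref{def:l2m}, with total masses $\mu(X)=\|u_0\|_{X,m}<+\infty$ and $\nu(Y)=\|v_0\|_{Y,m}<+\infty$ since $u_0\in A(X,mK_X)$ and $v_0\in A(Y,mK_Y)$. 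I would then define $\phi_k := u_k/u_0$ on $X$ and $\psi_k := v_k/v_0$ on $Y$ for $1\leq k\leq N$, interpreted as quotients of the local coefficient functions so that the twist $(dz_1\wedge\cdots\wedge dz_n)^{\otimes m}$ cancels and $\phi_k$, $\psi_k$ become well-defined holomorphic functions off the zero loci of $u_0$, $v_0$.

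Verifying the hypotheses of Theorem~\ref{thm:rudin} then amounts to four points. First, the exponent $p=2/m$ is not an even integer: in the range $m\geq 2$ one has $0<p\leq 1$, so $p$ is certainly not even (this is precisely where $m=1$, giving $p=2$, would be excluded). Second, $\phi_k\in L^p(X,\mu)$ and $\psi_k\in L^p(Y,\nu)$, because a local computation gives
$$
\int_X |\phi_k|^p\, d\mu = \int_X \left|\frac{u_k}{u_0}\right|^{2/m} |u_0\wedge \overline{u}_0|^{1/m} = \int_X |u_k\wedge \overline{u}_k|^{1/m} = \|u_k\|_{X,m}<+\infty,
$$
and symmetrically for $\psi_k$. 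Third, the functions $\phi_k$ are defined off $\{u_0=0\}$, and this set is $\mu$-null because there the local density $|f_0|^{2/m}$ of $d\mu$ vanishes; hence each $\phi_k$ is defined and $\mu$-measurable almost everywhere, and likewise for $\psi_k$. Fourth, the assumed identity is exactly the normalized one: pointwise off the zero locus, factoring $|f_0|^{2/m}$ out of the local coefficient yields
$$
\Big|\big(u_0+\textstyle\sum_{k}\alpha_k u_k\big)\wedge \overline{\big(u_0+\sum_{k}\alpha_k u_k\big)}\Big|^{1/m} = \Big|1+\sum_{k}\alpha_k \phi_k\Big|^{2/m}\,|u_0\wedge\overline{u}_0|^{1/m},
$$
so integrating turns the hypothesis into
$$
\int_X \Big|1+\sum_{1\leq k\leq N}\alpha_k \phi_k\Big|^p d\mu = \int_Y \Big|1+\sum_{1\leq k\leq N}\alpha_k \psi_k\Big|^p d\nu
$$
for all $(\alpha_1,\dots,\alpha_N)\in\mathbb{C}^N$.

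With these verified, Theorem~\ref{thm:rudin} applied to $(\phi_1,\dots,\phi_N)$ and $(\psi_1,\dots,\psi_N)$ gives their equimeasurability: for every non-negative Borel function $f:\mathbb{C}^N\to\mathbb{R}_{\geq 0}$,
$$
\int_X f(\phi_1,\dots,\phi_N)\,d\mu = \int_Y f(\psi_1,\dots,\psi_N)\,d\nu,
$$
which is precisely the asserted identity once $d\mu$, $d\nu$, $\phi_k$, $\psi_k$ are written back in terms of $u_k$, $v_k$. I expect the only real subtleties to be bookkeeping rather than conceptual: confirming that $d\mu$ is a well-defined finite measure independent of the coordinate chart, and that the quotients $u_k/u_0$ are legitimate $\mu$-measurable functions because the indeterminacy locus $\{u_0=0\}$ is $\mu$-negligible. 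The genuine mathematical content is supplied entirely by Rudin's equimeasurability theorem; the task here is the translation from pluricanonical $L^{2/m}$-data into the scalar $L^p$-framework it requires.
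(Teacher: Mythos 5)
Your proposal is correct and follows essentially the same route as the paper's own proof: define $d\mu=|u_0\wedge\overline{u}_0|^{1/m}$, $d\nu=|v_0\wedge\overline{v}_0|^{1/m}$, view $u_k/u_0$ and $v_k/v_0$ as $L^{2/m}$ functions, rewrite the hypothesis in the normalized form, and invoke Theorem~\ref{thm:rudin}. The only difference is that you spell out the verifications (non-evenness of $p=2/m$, finiteness of the measures, $\mu$-negligibility of $\{u_0=0\}$, and the pointwise factorization) that the paper leaves implicit.
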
 

\begin{proof}
Set $d\mu=|u_0 \wedge \overline{u}_0|^{1/m}$ and $d\nu=|v_0 \wedge \overline{v}_0|^{1/m}$. 
The measures $\mu$ and $\nu$ are well-defined on $X$ and $Y$, respectively. Then we have that 
$u_k/u_0 \in L^{2/m}(X, d\mu)$ and $v_k / v_0 \in L^{2/m}(Y, d\nu)$, respectively. From the assumption, we have that 
$$
\int_X | 1+ \sum_{1\leq k \leq N}\alpha_k \frac{u_k}{u_0} |^{2/m} d\mu = \int_Y | 1+ \sum_{1\leq k \leq N}\alpha_k \frac{v_k}{v_0} |^{2/m} d\nu
$$
for every $N$-tuple of complex numbers $\{ \alpha_k\}_{k=1}^N$. 
Therefore Lemma \ref{lem:rudin2} follows from Theorem \ref{thm:rudin}. 
\end{proof}

We introduce the following result obtained by Deng, Wang, Zhang, and Zhou. 
This is a fundamental result and the absolute version of Corollary \ref{cor:domainfibration}. 
\begin{theorem}$($\cite[Theorem 1.2]{DWZZ}$)$\label{thm:DWZZ}
Let $\Omega_1\subset \mathbb{C}^n$ and $\Omega_2 \subset \mathbb{C}^m$ be bounded hyperconvex domains. 
If there is a linear isometry $T:A^p(\Omega_1)\to A^p(\Omega_2)$ for some $0<p<2$, then $n=m$ and 
there exists a unique biholomorphic map $F:\Omega_1 \to \Omega_2$ such that 
$$
|T\phi\circ F||J_F|^{2/p}=|\phi|
$$
for all $\phi \in A^p(\Omega_1)$, where $J_F$ is the holomorphic Jacobian of $F$. 
\end{theorem}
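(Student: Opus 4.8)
The plan is to run, for $A^p$-spaces, the machinery behind Lemma~\ref{lem:rudin2}, and then to upgrade the resulting equimeasurability into a genuine biholomorphism by extracting a \emph{pointwise} functional equation for $T$. Since $\Omega_1$ is bounded, the constant $1$ and the coordinates $z_1,\dots,z_n$ all lie in $A^p(\Omega_1)$; set $\psi_0:=T1$, which is not identically zero because $T$ is an isometry. As $0<p<2$ is never an even integer, Theorem~\ref{thm:rudin} applies, and running the argument of Lemma~\ref{lem:rudin2} on the family $\{1,z_1,\dots,z_n\}$ and its image yields, for every non-negative Borel $f$,
\[
\int_{\Omega_1} f(z)\,d\lambda_n(z)=\int_{\Omega_2} f\!\left(\tfrac{Tz_1}{T1},\dots,\tfrac{Tz_n}{T1}\right)|T1|^p\,d\lambda_m ,
\]
where $\lambda_n,\lambda_m$ denote Lebesgue measure. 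Writing $G:=(Tz_1/T1,\dots,Tz_n/T1):\Omega_2\to\mathbb{C}^n$ (a meromorphic map, defined off the measure-zero zero set of $T1$), this says exactly $G_\ast(|T1|^p d\lambda_m)=\lambda_n|_{\Omega_1}$; choosing $f=\mathbf 1_{\mathbb{C}^n\setminus\overline{\Omega_1}}$ forces $G(\Omega_2)\subset\overline{\Omega_1}$. Note already a dimension constraint: the image of a holomorphic $G$ on $\Omega_2\subset\mathbb{C}^m$ has real dimension $\le 2m$, so it cannot carry the full Lebesgue measure of the open set $\Omega_1$ unless $n\le m$.

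The heart of the proof is to promote this to a functional equation. I would enlarge the family by one function $h\in A^p(\Omega_1)$ and apply Theorem~\ref{thm:rudin} to $\{1,z_1,\dots,z_n,h\}$. The two resulting pushforward measures on $\mathbb{C}^{n+1}$ coincide; but on the $\Omega_1$-side the first $n$ coordinates are the identity, so that measure is carried by the graph $\{(z,h(z)):z\in\Omega_1\}$, whose conditional law over $z$ is a point mass. Matching conditionals then forces the $\Omega_2$-side conditional to be a point mass as well, giving
\[
\frac{Th}{T1}=h\circ G\ \text{ a.e. on }\Omega_2,\qquad\text{i.e.}\qquad Th=(h\circ G)\,T1 .
\]
The identical construction for $T^{-1}$ (using $1\in A^p(\Omega_2)$ and the coordinates $w_1,\dots,w_m$) produces a meromorphic $H:=(T^{-1}w_1/T^{-1}1,\dots,T^{-1}w_m/T^{-1}1):\Omega_1\to\mathbb{C}^m$ satisfying $T^{-1}k=(k\circ H)\,T^{-1}1$ for all $k\in A^p(\Omega_2)$.

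Finally I would compose the two identities. Substituting $k=Tz_j$ and $k=T1$ into the $T^{-1}$-relation gives $(Tz_j)\circ H=z_j/T^{-1}1$ and $(T1)\circ H=1/T^{-1}1$, whence $G\circ H=\mathrm{id}_{\Omega_1}$; symmetrically $H\circ G=\mathrm{id}_{\Omega_2}$. Thus $G$ and $H$ are mutually inverse, so in particular $n=m$ and $F:=H:\Omega_1\to\Omega_2$ is a biholomorphism. Change of variables in $H_\ast(|T^{-1}1|^p d\lambda_n)=\lambda_m|_{\Omega_2}$ gives $|J_F|^2=|T^{-1}1|^p$, i.e. $|J_F|^{2/p}=|T^{-1}1|$; combined with $(T\phi)\circ F=T^{-1}(T\phi)/T^{-1}1=\phi/T^{-1}1$, this yields the claimed identity $|T\phi\circ F|\,|J_F|^{2/p}=|\phi|$. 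Uniqueness follows because the displayed identity forces the inverse of any admissible $F$ to coincide with the explicitly $T$-determined map $G=(Tz_j/T1)_j$.

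The step I expect to be the main obstacle is the extraction of the pointwise holomorphic relation $Th=(h\circ G)\,T1$ from the purely measure-theoretic equimeasurability: this needs a careful disintegration-of-measures argument, and, just as importantly, a regularity cleanup. The maps $G,H$ are a priori only meromorphic (defined off the zero loci of $T1$ and $T^{-1}1$), and one must verify that being mutually inverse forces them to extend to honest biholomorphisms of the \emph{open} domains, with $G(\Omega_2)$ landing in $\Omega_1$ rather than merely $\overline{\Omega_1}$.
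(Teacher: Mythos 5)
Your use of Rudin's theorem (Theorem \ref{thm:rudin}) is sound and is in substance the same engine the paper runs for its Stein generalization (Theorem \ref{thm:stein}, which follows \cite{DWZZ}): your finite family $\{1,z_1,\dots,z_n,h\}$ plays the role of the paper's countable dense family, your graph-support argument replaces the paper's hyperplane correspondence $T(H_{X,z})=H_{Y,w}$, and your dimension count is the paper's. The functional equations $Th=(h\circ G)\,T1$ and $T^{-1}k=(k\circ H)\,T^{-1}1$, and the mutual-inverse relations, are correct on the full-measure open sets where they make sense (and no disintegration is needed: the graph of $h$ is a Borel set carrying the full pushforward measure, so the complement has measure zero). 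Moreover, since $\overline{\Omega_1}$ and $\overline{\Omega_2}$ are bounded, $G$ and $H$ extend holomorphically across the zero sets of $T1$ and $T^{-1}1$ by the Riemann removable singularity theorem. Up to this point the proposal is a legitimate, somewhat streamlined version of the known argument.

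The genuine gap is the step you defer as ``regularity cleanup,'' and it is not a cleanup: it is the heart of the theorem, and it is exactly where the hyperconvexity hypothesis---which your proof never uses---must enter. After your steps you have holomorphic maps $H:\Omega_1\to\overline{\Omega_2}$ and $G:\Omega_2\to\overline{\Omega_1}$ that are mutually inverse off the closed, measure-zero sets $S_1:=H^{-1}(\partial\Omega_2)$ and $S_2:=G^{-1}(\partial\Omega_1)$, and mutual inverseness alone cannot force $S_1=S_2=\emptyset$. Concretely: take $n=m\geq 2$, $\Omega_1=B$ the unit ball, $\Omega_2=B\setminus\{0\}$, and $T$ the restriction map, which is a surjective linear isometry $A^p(B)\to A^p(B\setminus\{0\})$ by Hartogs extension; then $G=H=\mathrm{id}$, every identity you derived holds, $S_1=\{0\}\neq\emptyset$, and no biholomorphism $B\to B\setminus\{0\}$ exists. (This does not contradict the theorem, since $B\setminus\{0\}$ is not hyperconvex---but it shows your argument, which nowhere distinguishes hyperconvex targets, cannot close.) What is needed is the block the paper supplies: the exhaustiveness of the $p$-Bergman kernel of the hyperconvex target (Theorem \ref{thm:nzz} in $\mathbb{C}^n$; Theorem \ref{thm:exhaustive} via Ohsawa's Theorem \ref{thm:hyperconvex} in the Stein case), combined with the pointwise identity $|\phi|=|T\phi\circ F|\,|J_F|^{2/p}$, forces $|J_F|\to 0$ along sequences approaching the bad set, so $\log|J_F|$ extends as a plurisubharmonic function equal to $-\infty$ there and the bad set is closed pluripolar (Lemma \ref{lem:pluripolar}); then $F$ extends holomorphically across it (Theorem \ref{thm:demailly}); finally, composing $F$ with a negative plurisubharmonic exhaustion of the hyperconvex target yields a plurisubharmonic function attaining its maximum at interior points of the bad set, and the maximum principle forces that set to be empty. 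Without this block, or a substitute that genuinely uses hyperconvexity, the proposal does not prove the statement.
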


\subsection{$m$-th Bergman kernel}
In this subsection, we let $\widetilde{X}$ denote an $n$-dimensional Stein manifold, and $X$ denote a relatively compact Stein domain in $\widetilde{X}$. 
Firstly, we introduce the definition of the $m$-th Bergman kernel $K_{X,m}$ and the exhaustivity of it. 

\begin{definition}[$m$-th Bergman kernel]\label{def:mbergman}
We set 
$$
K_{X, m}(z):= \sup \{ |u\wedge \overline{u}|^{1/m}(z) \mid u\in A(X, mK_X), \int_X |u\wedge \overline{u}|^{1/m} \leq 1\}, 
$$
for each point $z\in X$. 

We also say that {\it $K_{X, m}$ is  exhaustive} if the following function 
$$
K'_{X, m}(z):=\sup \{ |u\wedge \overline{u}|^{1/m}/dV_{\widetilde{X}}(z) \mid u\in A(X, mK_X), \int_X |u\wedge \overline{u}|^{1/m} \leq 1 \}
$$
is exhaustive on $X$ for a volume form $dV_{\widetilde{X}}$ on $\widetilde{X}$. 
\end{definition}

\begin{remark}
The function $K'_{X, m}$ depends on the choice of volume forms on $\widetilde{X}$. 
However, the exhaustivity of $K_{X, m}$ is independent of them. 
\end{remark}

Since $\widetilde{X}$ is Stein, we can take a volume form $dV_{\widetilde{X}}$ (= smooth Hermitian metric on $-K_{\widetilde{X}}$) with curvature positive on $X$. 
Taking this metric, we have that $K'_{X, m}$ is a continuous plurisubharmonic function on $X$ (cf. \cite[Lemma 6.2 and 6.3]{DWZZ1}, \cite[Proposition 28.3]{HPS}, \cite{PT}). 
Hence, the exhaustivity of $K_{X, m}$ implies the pseudoconvexity of $X$. 

On the other hand, if $\widetilde{X}=\mathbb{C}^n$ and $X=\Omega$ is a bounded domain in $\mathbb{C}^n$, it is known \cite{NZZ} that the pseudoconvexity of $\Omega$ implies the exhaustivity of $K_{\Omega, m}$ for $m\geq 2$. 

\begin{theorem}$($\cite[Theorem 2.7]{NZZ}$)$\label{thm:nzz}
Let $\Omega \Subset \mathbb{C}^n$ be any bounded domain. Then $\Omega$ is pseudoconvex if and only if $K_{\Omega, p}$ is an exhaustion function for $p\in (0, 2)$. 
\end{theorem}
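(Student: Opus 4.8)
The plan is to prove the two implications separately, after recasting $K_{\Omega,p}$ in function-theoretic terms. Writing a pluricanonical section as $u = f\,(dz_1 \wedge \cdots \wedge dz_n)^{\otimes m}$ with $m = 2/p$, Definition \ref{def:mbergman} gives $|u \wedge \overline{u}|^{1/m} = c_n |f|^{p}\, d\mu_n$ for a fixed positive constant $c_n$, so that (up to this harmless constant)
$$
K_{\Omega,p}(z) = \sup\Bigl\{ |f(z)|^{p} \ \Big|\ f \in A^p(\Omega),\ \int_\Omega |f|^{p}\, d\mu_n \le 1 \Bigr\}.
$$
Recall that $K_{\Omega,p}$ being an exhaustion means precisely that $K_{\Omega,p}(z) \to +\infty$ as $z \to \partial\Omega$, i.e.\ that every sublevel set $\{ K_{\Omega,p} < c\}$ is relatively compact in $\Omega$.

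For the implication ``$K_{\Omega,p}$ exhaustive $\Rightarrow \Omega$ pseudoconvex'' I would simply invoke the discussion preceding this theorem. Equipping $\mathbb{C}^n$ with the standard flat metric, $K'_{\Omega,p}$ is a continuous plurisubharmonic function on $\Omega$ by the cited variation results (\cite{DWZZ1}, \cite{HPS}, \cite{PT}). If it is moreover exhaustive, then $\Omega$ admits a continuous plurisubharmonic exhaustion function and is therefore pseudoconvex by the classical characterization of pseudoconvexity. This direction uses no special feature of $\mathbb{C}^n$ beyond the plurisubharmonicity of the kernel.

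The substance lies in the converse, that pseudoconvexity forces the boundary blow-up of $K_{\Omega,p}$, and here I would argue by constructing good test sections via an $L^{2/m}$-variant of the Ohsawa--Takegoshi extension theorem (the key Proposition of this paper). Fix a boundary point $\zeta \in \partial\Omega$ and a nearby point $z \in \Omega$. Since $\Omega \Subset \mathbb{C}^n$ is bounded and pseudoconvex, the function $-\log\delta(\cdot)$, with $\delta$ the Euclidean distance to $\partial\Omega$, is plurisubharmonic; from it one builds a plurisubharmonic weight $\varphi_z$ adapted to the point $z$ and to the boundary geometry. Applying the extension theorem to the value $1$ at the $0$-dimensional subvariety $\{z\}$ with this weight, I would obtain $f_z \in A^p(\Omega)$ with $f_z(z) = 1$ and $\int_\Omega |f_z|^{p}\, d\mu_n \le \varepsilon(z)$, where $\varepsilon(z) \to 0$ as $z \to \zeta$. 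Normalizing $f_z$ then gives
$$
K_{\Omega,p}(z) \ \ge\ \frac{|f_z(z)|^{p}}{\int_\Omega |f_z|^{p}\, d\mu_n} \ \ge\ \frac{1}{\varepsilon(z)} \ \longrightarrow\ +\infty,
$$
which is the desired exhaustivity.

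The main obstacle is precisely this hard direction, for two reasons. First, the classical Ohsawa--Takegoshi theorem is an $L^2$ statement, whereas $p \in (0,2)$ is not even, so one must control the \emph{$L^p$}-norm of the extension rather than its $L^2$-norm; this is exactly what the $L^{2/m}$-version of the extension theorem is designed to handle, the concavity available for $p < 2$ being a useful feature. Second, and more delicately, the weight $\varphi_z$ must be arranged so that it simultaneously meets the plurisubharmonicity/curvature hypotheses of the extension theorem and yields a bound $\varepsilon(z)$ that genuinely degenerates to $0$ as $z \to \partial\Omega$ --- a mere uniform lower bound for $K_{\Omega,p}$ would not suffice. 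It is here that the global boundedness and pseudoconvexity of $\Omega$, encoded in the plurisubharmonicity of $-\log\delta$, are indispensable, and obtaining the degeneration uniformly enough to conclude relative compactness of all sublevel sets is the crux of the argument.
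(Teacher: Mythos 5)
First, a remark on the comparison itself: the paper gives no proof of Theorem \ref{thm:nzz} --- it is quoted verbatim from \cite[Theorem 2.7]{NZZ}. The nearest statement the paper actually proves is Theorem \ref{thm:exhaustive}, for \emph{hyperconvex} relatively compact domains in Stein manifolds, and it is proved by a route entirely different from yours: the localization principle of Lemma \ref{lem:localization} (via H\"ormander's $L^2$-estimate), Ohsawa's Theorem \ref{thm:hyperconvex} that the classical Bergman kernel of a bounded hyperconvex domain is exhaustive, and the pointwise inequality $K_X\le K_{X,m}$ obtained from $u\mapsto u^{\otimes m}$; hyperconvexity makes the hard direction you attempt unnecessary there. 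Your easy implication (exhaustive $\Rightarrow$ pseudoconvex) is correct and coincides with the paper's discussion preceding the theorem.

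The hard implication, however, has a genuine gap, and it sits exactly where you concede the crux lies: the weight $\varphi_z$ is never constructed, and the candidate your sketch points to does not work in the stated generality. Any Ohsawa--Takegoshi point extension with a plurisubharmonic weight $\varphi$ finite at $z$ yields $\int_\Omega|f_z|^2e^{-\varphi}\,d\mu\le Ce^{-\varphi(z)}$, and H\"older (exponents $2/p$ and $2/(2-p)$) then gives
\begin{equation*}
\int_\Omega|f_z|^p\,d\mu\ \le\ C^{p/2}\left(\int_\Omega e^{\frac{p}{2-p}\left(\varphi-\varphi(z)\right)}\,d\mu\right)^{\frac{2-p}{2}},
\end{equation*}
so your $\varepsilon(z)\to0$ forces $\int_\Omega e^{\frac{p}{2-p}(\varphi_z-\varphi_z(z))}\,d\mu\to0$: you need plurisubharmonic functions whose exponential mass concentrates near $z$. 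By the maximum principle such functions must peak on the boundary near the chosen boundary point, i.e.\ you need boundary-localized psh ``peak'' functions, which a general bounded pseudoconvex domain need not admit. With the weight your sketch suggests, $\varphi=-\alpha\log\delta$, the integral equals $\delta(z)^{\frac{p\alpha}{2-p}}\int_\Omega\delta(w)^{-\frac{p\alpha}{2-p}}\,d\mu(w)$, which tends to $0$ only if $\int_\Omega\delta^{-\beta}\,d\mu<\infty$ for some $\beta>0$; that is an unstated boundary-regularity hypothesis, and it fails for suitable bounded pseudoconvex domains (already in $\mathbb{C}$: a disc minus a sequence of points whose separation radii $\rho_j$ satisfy $\sum\rho_j^2<\infty$ but $\sum\rho_j^{2-\beta}=\infty$ for every $\beta>0$; a $\mathbb{C}^2$ analogue is obtained by deleting vertical discs from $\mathbb{D}^2$). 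Note also that the black box you invoke, Theorem \ref{thm:lpohsawa}, is an extension theorem from fibers of a Stein fibration for $p=2/m$, $m\ge2$ an integer, not a weighted point-extension for arbitrary $p\in(0,2)$, so it does not have the shape your argument needs. The mechanism that actually makes the theorem true precisely for $p<2$ (and false for $p=2$, cf.\ the punctured disc, whose $L^2$ Bergman kernel is not exhaustive) is the local integrability of $|w|^{-p}$ in one complex variable: one must produce competitors with pole-type singularities at the boundary point, and constructing these on an \emph{arbitrary} bounded pseudoconvex domain is the real content of the proof in \cite{NZZ}, which your sketch does not supply.
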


%However, we do not know whether the pseudoconvexity (or the hyperconvexity) implies the exhaustivity of the $m$-th Bergman kernel in the case of relatively compact Stein domains in Stein manifolds. 
%This seems a difficult problem. 
We also prove that $K_{X, m}$ is exhaustive when $\widetilde{X}$ is a Stein manifold and $X\Subset \widetilde{X}$ is a relatively compact hyperconvex domain. 
To prove this theorem, we prepare the following results. 
The first one is a localization principle. 
This is obtained by Ohsawa for bounded pseudoconvex domains in $\mathbb{C}^n$ \cite{Oh1}. 
A more general result appears in \cite{Oh2}. 
We can prove this principle by using H\"ormander's $L^2$-estimate. 

\begin{lemma}\label{lem:localization}
In the above setting, we let $a\in \partial X$ be a boundary point.
Then there exists an open neighborhood $U_0$ of $ a $ such that 
for any two open neighborhoods $V\Subset U \subset U_0$  of $a$,  
there is a positive constant $C$ such that 
$$
K_{U\cap X}(x)\leq CK_{X}(x)
$$
for any $x\in V\cap X$. Here $K_{U\cap X}$ and $K_{X}$ are Bergman kernels of $U\cap X$ and $X$, respectively,  
and $C$ is independent of $x$. 
\end{lemma}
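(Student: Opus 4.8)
```latex
\textbf{Proof proposal.}
The plan is to prove the localization principle via the standard
H\"ormander $L^2$-machinery for the (ordinary, $m=1$) Bergman kernel,
following the scheme of Ohsawa and the $L^2$-extension philosophy.
The inequality $K_{U\cap X}(x)\le C\,K_X(x)$ is an extremal-problem
statement: the Bergman kernel at $x$ is the supremum of $|u(x)|^2$
over unit-$L^2$-norm holomorphic sections, so the content is that a
section optimal for the \emph{smaller} domain $U\cap X$ can be matched,
up to a uniform constant, by a global section on $X$ with comparable
$L^2$-norm and comparable value at $x$. The natural way to produce
such a global section is to take the local extremal section, cut it
off near $a$, and correct the resulting $\dbar$-error with a solution
of the $\dbar$-equation that has controlled $L^2$-norm.

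First I would set up the geometry. Shrinking $U_0$ if necessary, I may
assume $U_0$ is biholomorphic to a coordinate ball in which
$K_{\widetilde X}$ is trivialized, so that sections of $K_X$ over
$U_0\cap X$ are identified with honest holomorphic functions and the
$L^{2}$-norm with an ordinary $L^2$-integral against a fixed volume
form. Fix $V\Subset U\subset U_0$ and a cutoff $\chi\in
C_c^\infty(U)$ with $\chi\equiv 1$ on a neighborhood of
$\overline V$. Given $x\in V\cap X$, let $u$ be a holomorphic section
on $U\cap X$ realizing (or nearly realizing) the extremal value
$K_{U\cap X}(x)$ with $\int_{U\cap X}|u\wedge\overline u|\le 1$.
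Then $\chi u$ is a smooth section on all of $X$ (extended by zero),
and $\dbar(\chi u)=(\dbar\chi)u$ is supported in the compact annular
region $U\setminus V$, where $u$ is already controlled in $L^2$.

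Next I would invoke H\"ormander's $L^2$-estimate on the \emph{pseudoconvex}
domain $X$ to solve $\dbar w=(\dbar\chi)u$ with
$\int_X|w\wedge\overline w|\le C_1\int_{U\setminus V}|u\wedge\overline u|
\le C_1$, the constant $C_1$ depending only on $U,V,U_0$ and the fixed
weight, not on $x$; here one uses that $X$ is hyperconvex (hence
pseudoconvex, hence carries the requisite complete K\"ahler metric or
bounded plurisubharmonic exhaustion needed to apply H\"ormander on
$X$). Crucially, to guarantee that the correction does not spoil the
value at $x$, I would solve $\dbar w=(\dbar\chi)u$ with an extra weight
that has a logarithmic pole at $x$ (the standard Skoda/Demailly trick),
forcing $w(x)=0$. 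The section $\tilde u:=\chi u-w$ is then a global
holomorphic section of $mK_X$... rather, of $K_X$, on all of $X$, with
$\tilde u(x)=u(x)$ and $\int_X|\tilde u\wedge\overline{\tilde u}|
\le 2(\,1+C_1)=:C$. By the extremal characterization of $K_X$,
\[
K_X(x)\ge \frac{|\tilde u(x)\wedge\overline{\tilde u}(x)|}
{\int_X|\tilde u\wedge\overline{\tilde u}|}
\ge \frac{|u(x)\wedge\overline u(x)|}{C}
= \frac{K_{U\cap X}(x)}{C},
\]
which is exactly the claimed bound with the same $C$ for every
$x\in V\cap X$.

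The main obstacle will be the \emph{uniformity} of the constant $C$ in
$x$, and in particular making the pole-correction step work uniformly.
Imposing $w(x)=0$ via a weight singularity requires a lower bound on
the room available near $x$, but as $x$ approaches $\partial X$ inside
$V\cap X$ the local geometry degenerates; the remedy is that the weight
singularity is a fixed local model ($\sim 2n\log|z-x|$) whose
contribution to the H\"ormander estimate is bounded by a dimensional
constant independent of the location of $x$, so after subtracting the
singular part one recovers a uniform $C_1$. Verifying that the
cutoff-derivative term $(\dbar\chi)u$ stays uniformly $L^2$-bounded is
immediate since $\dbar\chi$ is supported in the fixed annulus
$U\setminus V$ where the total mass of $u$ is at most its global mass
$1$; the delicate point is only that the weight used to kill $w(x)$
does not blow up this annular integral, which holds because $\dbar\chi$
is supported away from $x$ (as $x\in V$ and $\operatorname{supp}
\dbar\chi\subset U\setminus V$), so on that annulus the singular weight
is bounded. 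Assembling these bounds yields the uniform constant and
completes the proof.
```
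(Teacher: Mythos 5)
Your overall strategy coincides with the paper's: take the extremal section on $U\cap X$ at $x$, multiply by a cutoff $\chi$, solve $\overline{\partial}w=\overline{\partial}(\chi u)$ on $X$ by H\"ormander's estimate against a weight with a logarithmic pole at $x$ so that the correction satisfies $w(x)=0$, and obtain uniformity of the constant in $x$ from the fact that the support of $\overline{\partial}\chi$ lies in $U\setminus V$, at fixed positive distance from the points $x\in V\cap X$, so the singular weight is uniformly bounded there. All of these ingredients are exactly the paper's.

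There is, however, one concrete gap: the construction of the weight itself. You describe it as ``a fixed local model $\sim 2n\log|z-x|$'' in a coordinate ball $U_0$, but H\"ormander's estimate is applied on all of $X$, which is a relatively compact domain in a Stein \emph{manifold}, not in $\mathbb{C}^n$. The weight must therefore be a single function defined on the whole of $X$, with curvature dominating the metric used to measure $|\overline{\partial}(\chi u)|^2_\omega$ globally; a local expression $\log|z-x|$ has no canonical plurisubharmonic extension beyond the chart, so the solve step is not justified as written. This is precisely where the paper uses the Stein hypothesis: it chooses global holomorphic functions $g_1,\dots,g_n\in\mathcal{O}(\widetilde{X})$ that restrict to coordinates on $U_0$, rescales them so that $\sup_X|g_i|\le \tfrac{1}{2\sqrt{n}}$, and sets $\phi(z)=(n+1)\log\bigl(\sum_i|g_i(z)-g_i(x)|^2\bigr)+\psi(z)$, where $\psi<0$ is a global strictly plurisubharmonic function with $\sqrt{-1}\partial\overline{\partial}\psi\ge\omega$ on $X$. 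This $\phi$ is globally defined, negative (which is what allows dropping $e^{-\phi}\ge 1$ in the norm estimate for $\beta=\chi f-u$), has curvature at least $\omega$ on $X$, has a non-integrable singularity at $x$ forcing the vanishing of the correction there, and its singular factor is uniformly controlled on the support of $\overline{\partial}\chi$ because $\inf_{z}\inf_{x\in V\cap X}\sum_i|g_i(z)-g_i(x)|^2>0$ over that support --- which is your uniformity claim, now with a proof. One could instead truncate the local logarithm and compensate with a large multiple of a global strictly plurisubharmonic function, but some such globalization step, invoking Steinness of $\widetilde{X}$, is genuinely needed and is missing from your argument.
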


\begin{proof}[\indent \sc proof of lemma \ref{lem:localization}]
	We fix a K\"ahler form $\widetilde{\omega}$ on $\widetilde{X}$, and set $\omega := \widetilde{\omega}|_{X}$. 
	Since $ \widetilde{X} $ is a Stein manifold, we can take global holomorphic functions $(g_1, \ldots , g_n)\in \mathcal{O}(\widetilde{X})^{n}$ and an open neighborhood $U_0$ of $a$ such that $(g_1, \ldots , g_n)$ defines a biholomorphic coordinate map on $U_0$. 
	We will modify the norms of $g_i$ such that 
	$$
	\sup_{X}|g_i|\leq \frac{1}{2\sqrt{n}}
	$$ for $1\leq i \leq n$.
	We also take a smooth strictly plurisubharmonic function $\psi $ on $\widetilde{X}$ such that 
	$\psi <0$ and 
	$$
	\sqrt{-1}\ddbar \psi \geq \omega 
	$$
	on $ X $. 
	Then $\phi(z):=(n+1)\log (|g_1(z)-g_1(x)|^2+\cdots |g_n(z)-g_n(x)|^2) + \psi(z) $ satisfies $\phi <0$ and 
	$$
	\sqrt{-1}\ddbar \phi \geq \omega 
	$$
	on $X$. 
	
	Suppose that $V$ and $U$ are open neighborhoods of $a$ with $V \Subset U \subset U_0$. 
	Let $x\in V\cap X$ be any point. 
	We take a cut-off function $\chi \in C^\infty_c(U)$ which satisfies $0\leq \chi \leq 1$ and $\chi = 1$ on a neighborhood of $\overline{V}$. We have a holomorphic $(n, 0)$-form $f$ on $U\cap X$ such that $|f\wedge \overline{f}|(x)=K_{U\cap X}(x)$ and 
	$$
	\int_{U\cap X}|f\wedge \overline{f}|=1. 
	$$
	We define an $(n, 1)$-form $\alpha :=\overline{\partial}(\chi f)$ on $X$. We get 
	$$
	\int_X |\alpha|^2_{\omega}e^{-\phi}dV_{\omega} < +\infty.
	$$
	Thanks to H\"ormander's $L^2$-estimate (cf. \cite{DemCom}), we can obtain a solution $u$ satisfying $\overline{\partial} u=\alpha$ and 
	$$
	\int_X |u\wedge \overline{u}|e^{-\phi} \leq \int_X |\alpha|^2_{\omega}e^{-\phi}dV_{\omega}. 
	$$
	Let $\beta:=\chi f-u$. Then $\beta$ is a holomorphic $(n, 0)$-form on $X$, $|\beta \wedge \overline{\beta}|(x)= |f\wedge \overline{f}|(x)$, and 
    \begin{align*}
    \left (\int_X |\beta \wedge \overline{\beta}|\right)^{1/2} & \leq 1+ \left( \int_X |u\wedge \overline{u}|^2 e^{-\phi}\right)^{1/2} \\
    & \leq 1+ \left( \int_{supp|\overline{\partial}\chi |} |\alpha|^2_{\omega} e^{-\phi} dV_{\omega} \right)^{1/2} \\
    & <C. 
    \end{align*}
	 For the reason that 
	 $$
	 \inf_{z\in supp|\overline{\partial}\chi |} \inf_{x\in V\cap X} (|g_1(z)-g_1(x)|^2+\cdots |g_n(z)-g_n(x)|^2)>0,
	 $$
	 the above constant $C$ is independent of $x$. Therefore, we have 
	 $$
	 K_{U\cap X}(x)\leq  \frac{C^2|\beta \wedge \overline{\beta}|(x)}{\int_X |\beta \wedge \overline{\beta}|},
	 $$
	 which completes the proof. 
	
\end{proof}

The second one is the exhaustivity of the Bergman kernels of bounded hyperconvex domains. 
This result was obtained by Ohsawa. 
\begin{theorem}$($\cite{Oh3}$)$\label{thm:hyperconvex}
Let $D$ be a bounded hyperconvex domain in $\mathbb{C}^n$. Then $\lim_{z\to \partial D}K_D(z)=+\infty$. 
\end{theorem}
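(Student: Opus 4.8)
The plan is to reduce the statement to the extremal characterization of the Bergman kernel and then to show that the extremal mass concentrates as the pole approaches the boundary. Regarding $K_D$ as a function via the Euclidean volume form (the $m=1$ case of Definition \ref{def:mbergman}, with $(n,0)$-forms identified with functions through $dz_1\wedge\cdots\wedge dz_n$), the ordinary Bergman kernel on the diagonal satisfies, up to a normalizing constant,
$$
\frac{1}{K_D(w)}=\inf\Bigl\{\int_D|f|^2\,d\lambda \ \Big|\ f\in\mathcal{O}(D),\ f(w)=1\Bigr\}.
$$
Thus the assertion $\lim_{z\to\partial D}K_D(z)=+\infty$ is equivalent to the claim that this minimal $L^2$-mass tends to $0$ as $w\to\partial D$. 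So I would aim to produce, for each $w$ near $\partial D$, a holomorphic $f$ with $f(w)=1$ and $\int_D|f|^2\,d\lambda$ arbitrarily small.

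The tool I would use to build such an $f$ is an $L^2$-extension (Ohsawa--Takegoshi) estimate from the single point $w$, weighted by the pluricomplex Green function $g_{D,w}$ of $D$ with pole at $w$. Since $g_{D,w}$ is plurisubharmonic with logarithmic singularity $g_{D,w}(z)=\log|z-w|+O(1)$ at $w$, the weight $2n\,g_{D,w}$ has exactly the singularity needed to prescribe the value of $f$ at $w$, and the resulting bound takes the shape
$$
K_D(w)\ \ge\ \frac{c_n}{\lambda\bigl(\{z\in D: g_{D,w}(z)<-1\}\bigr)},
$$
with $c_n>0$ depending only on $n$ (the B{\l}ocki-type lower bound; one checks on the unit disc that both sides are comparable to $(1-|w|^2)^{-2}$). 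It then suffices to prove that the sublevel sets $\{g_{D,w}<-1\}$ have Euclidean volume tending to $0$ as $w\to\partial D$.

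This last point is where hyperconvexity enters. I would invoke the symmetry $g_{D,w}(z)=g_{D,z}(w)$ together with Demailly's theorem that on a bounded hyperconvex domain the Green function is continuous up to the boundary with vanishing boundary values: for each fixed $z\in D$ one has $g_{D,z}(w)\to 0$ as $w\to\partial D$, uniformly for $z$ in a compact set $K\subset D$. Hence for each such $K$ there is a neighborhood of $\partial D$ in which $g_{D,w}>-1$ on all of $K$, so $\{g_{D,w}<-1\}\subset D\setminus K$; letting $K$ exhaust $D$ and using $\lambda(D)<\infty$ gives $\lambda(\{g_{D,w}<-1\})\to0$, and the displayed lower bound forces $K_D(w)\to+\infty$.

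The main obstacle is precisely this volume decay of the sublevel sets, i.e. the uniform boundary decay of $g_{D,w}$: it is the genuine analytic content, and the only place the hyperconvexity hypothesis is used, while the $L^2$-extension step is standard once the singular weight $2n\,g_{D,w}$ is admitted. An alternative route, closer to Ohsawa's original argument, would avoid the Green function and instead run a twisted $\dbar$-estimate of Donnelly--Fefferman type with the weight $-\log(-\varphi)$ built from the negative plurisubharmonic exhaustion $\varphi$ of the hyperconvex domain; the key algebraic fact there is the self-bounded gradient inequality $\sqrt{-1}\,\partial\eta\wedge\dbar\eta\le \sqrt{-1}\,\ddbar\eta$ for $\eta=-\log(-\varphi)$, with $\eta(w)\to+\infty$ as $w\to\partial D$ supplying the decay. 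In that approach the delicate step is coupling a logarithmic pole at $w$ (to pin down $f(w)=1$) with the exhaustion weight so that the constants remain uniform in $w$.
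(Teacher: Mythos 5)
The paper gives no proof of Theorem \ref{thm:hyperconvex} for you to be measured against---it is quoted directly from Ohsawa \cite{Oh3}---so your proposal stands or falls on its own, and it falls at exactly the point you yourself identify as ``the genuine analytic content'': your third step invokes the symmetry $g_{D,w}(z)=g_{D,z}(w)$ of the pluricomplex Green function, and this is false in dimension $n\ge 2$. Symmetry does hold for $n=1$ and for convex domains (via Lempert's theory), but Bedford and Demailly constructed smooth bounded pseudoconvex domains in $\mathbb{C}^2$ whose pluricomplex Green function is \emph{not} symmetric; such domains are hyperconvex (Diederich--Forn\ae ss), so the failure occurs inside the very class of domains the theorem is about. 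The distinction is essential here: Demailly's continuity theorem \cite{Dem2} controls $g_{D,z}(w)$ as the \emph{evaluation point} $w$ tends to $\partial D$ with the \emph{pole} $z$ fixed, whereas your volume estimate needs decay as the \emph{pole} tends to the boundary, uniformly over evaluation points in a compact set. Without symmetry the former does not imply the latter, so your argument is incomplete at its crucial step.

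The volume decay $\lambda(\{g_{D,w}<-1\})\to 0$ is nevertheless true, and the standard way to obtain it (essentially Herbort's argument, streamlined by B{\l}ocki) replaces symmetry by a Monge--Amp\`ere comparison/integration-by-parts estimate. By the usual modifications one may take a bounded negative psh exhaustion $\varphi$ of $D$ with $\|\varphi\|_\infty\le 1$ and $(dd^c\varphi)^n\ge c_0\,d\lambda$; since $D$ is hyperconvex, $g_{D,w}$ has zero boundary values (this is where \cite{Dem2} enters), and B{\l}ocki's inequality for negative psh functions with vanishing boundary values, applied to the truncations $\max(g_{D,w},-j)$ and $\varphi$, gives
$$
c_0\,\lambda(\{g_{D,w}<-1\})\ \le\ \int_D(-g_{D,w})^n\,(dd^c\varphi)^n\ \le\ n!\int_D(-\varphi)\,(dd^cg_{D,w})^n\ =\ \gamma_n\,n!\,(-\varphi(w)),
$$
because $(dd^cg_{D,w})^n$ is a point mass $\gamma_n\delta_w$; the right-hand side tends to $0$ since exhaustivity forces $\varphi(w)\to 0$ at $\partial D$. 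With this lemma substituted for the symmetry claim, your scheme does prove the theorem: your second step, the bound $K_D(w)\ge c_n/\lambda(\{g_{D,w}<-1\})$, is a correct citation of the Herbort--B{\l}ocki estimate, with the caveat that $e^{-2ng_{D,w}}$ is not integrable at $w$, so the weight cannot be imposed on $f$ itself; it enters in the $\dbar$-correction step, where finiteness of the weighted norm forces the correction term to vanish at $w$. Note finally that this route is anachronistic relative to the cited theorem (Herbort's kernel estimate postdates Ohsawa's result, though there is no circularity, as it rests only on Donnelly--Fefferman/Ohsawa--Takegoshi-type estimates), while your alternative sketch with the weight $-\log(-\varphi)$ is much closer in spirit to the $L^2$ techniques, such as Lemma \ref{lem:localization}, that this paper actually develops.
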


Combining the above results, we can prove the following theorem. 

\begin{theorem}\label{thm:exhaustive}
In the above setting, we have that $\lim_{z\to \partial X} K_{X, m}(z)=+\infty$, that is, the $m$-th Bergman kernel is exhaustive. 
\end{theorem}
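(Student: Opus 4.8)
The plan is to reduce the claim for the $m$-th Bergman kernel to the ordinary ($m=1$) Bergman kernel $K_X = K_{X,1}$, and then to localize at each boundary point and invoke Ohsawa's theorem. The reduction is elementary: given any $s \in A(X, K_X)$ with $\int_X |s\wedge\overline{s}| \le 1$, the section $u := s^{\otimes m} \in H^0(X, mK_X)$ satisfies $|u\wedge\overline{u}|^{1/m} = |s\wedge\overline{s}|$ pointwise, since in local coordinates $s = h\,(dz_1\wedge\cdots\wedge dz_n)$ gives $u = h^m (dz_1\wedge\cdots\wedge dz_n)^{\otimes m}$ and $|h^m|^{2/m} = |h|^2$. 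Thus $u$ is an admissible competitor in Definition \ref{def:mbergman} of the same total mass, and taking the supremum over such $s$ yields $K_{X,m}(z) \ge K_X(z)$ as non-negative $(n,n)$-forms (equivalently $K'_{X,m} \ge K'_{X,1}$ after dividing by a fixed $dV_{\widetilde X}$). Hence it suffices to prove $\lim_{z\to\partial X} K_X(z) = +\infty$ for the ordinary Bergman kernel.

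Next I would fix a boundary point $a \in \partial X$ and localize via Lemma \ref{lem:localization}, which supplies a neighborhood $U_0$ of $a$, a global coordinate map $g = (g_1,\ldots,g_n)$ biholomorphic on $U_0$, and, for any $V \Subset U \subset U_0$, a constant $C$ with $K_{U\cap X}(x) \le C\,K_X(x)$ on $V\cap X$. I would take $U := g^{-1}(B)$ for a small ball $B = B(g(a), r)$ with $\overline U \subset U_0$, so that $g$ identifies $U\cap X$ with a bounded open subset of $\mathbb{C}^n$. The key point is that $U\cap X$ is hyperconvex: if $\varphi < 0$ is a plurisubharmonic exhaustion of $X$ and $\rho(z) := |g(z)-g(a)|^2 - r^2 < 0$ on $U$, then $\psi := \max(\varphi, \rho)$ is a negative plurisubharmonic function on $U\cap X$ whose sublevel sets $\{\psi < c\} = \{\varphi < c\}\cap\{\rho < c\}$ are relatively compact in $U\cap X$ (the first factor being relatively compact in $X$, the second in $U$). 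Transporting by $g$, each connected component of $g(U\cap X)$ is a bounded hyperconvex domain in $\mathbb{C}^n$, to which Ohsawa's Theorem \ref{thm:hyperconvex} applies.

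Finally I would assemble the estimate. For the component $\Omega$ of $g(U\cap X)$ carrying $g(a)$ on its boundary, Theorem \ref{thm:hyperconvex} gives $K_\Omega(w) \to +\infty$ as $w \to g(a)$; since the Bergman kernel only sees the component of a point and transforms as an $(n,n)$-form under the biholomorphism $g$ (whose Jacobian is bounded and non-vanishing on $\overline U$), this yields $K_{U\cap X}(x) \to +\infty$ as $x \to a$ in $X$. Combined with $K_X(x) \ge C^{-1} K_{U\cap X}(x)$ on $V\cap X$, we get $K_X(x) \to +\infty$ as $x\to a$. Because $\partial X$ is compact (as $X \Subset \widetilde X$), a standard subsequence argument upgrades this pointwise-at-each-boundary-point statement to $\lim_{z\to\partial X} K_X(z) = +\infty$, and the comparison from the first step then transfers the conclusion to $K_{X,m}$. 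The main obstacle is precisely the middle step: manufacturing a genuine bounded hyperconvex model in $\mathbb{C}^n$ out of the intersection $U\cap X$ sitting inside the Stein manifold, for which the $\max(\varphi,\rho)$ construction, together with the care needed over connected components of $U\cap X$, is the crucial ingredient.
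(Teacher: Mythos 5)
Your proposal is correct and follows essentially the same route as the paper's proof: the comparison $K_{X,m}\geq K_X$ via $m$-th tensor powers $u^{\otimes m}$, localization at a boundary point through Lemma \ref{lem:localization}, and Ohsawa's Theorem \ref{thm:hyperconvex} applied to the hyperconvex piece $U\cap X$ viewed in coordinates as a bounded domain in $\mathbb{C}^n$. Your write-up in fact supplies details the paper leaves implicit, namely the $\max(\varphi,\rho)$ construction of the negative plurisubharmonic exhaustion of $U\cap X$ and the bookkeeping over connected components.
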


\begin{proof}[\indent \sc proof of theorem \ref{thm:exhaustive}]
For any point $a\in \partial X$, we take an open coordinate neighborhood $U_0$ and two open hyperconvex neighborhoods $V \Subset U\subset U_0$ of $a$ which 
satisfy the condition of Lemma \ref{lem:localization}. 
We have $K_{U\cap X}(z)\leq CK_{X}(z)$ for $z\in V\cap X$ and some positive constant $C>0$. 
Then Theorem \ref{thm:hyperconvex} implies that $\lim_{z\to a} K_{X}(z)=+\infty$. 
We also obtain $K_{X}(z)\leq K_{X, m}(z)$ since for any holomorphic  $ (n,0) $-form $ u\in A(X, K_X) $, $ u^{\otimes m} \in A(X, mK_X) $ and 
$$
|u^{\otimes m} \wedge \overline{u}^{\otimes m}|^{1/m}=|u \wedge \overline{u}|.
$$
Hence, we obtain the conclusion. 

\end{proof}

\section{Proof of Theorem \ref{thm:stein}}\label{sec:absolute}
In this section, we prove Theorem \ref{thm:stein}. The main argument of the proof of this theorem is almost the same as the proof of Theorem 1.2 in \cite{DWZZ}. 
Before proving Theorem \ref{thm:stein}, we provide basic settings and some lemmas. 

We set $H_{X,z}:=\{ u\in A(X, mK_X)\mid u(z)=0\}$ to be a hyperplane in $A(X, mK_X)$. 
%For all $z\in X$, they are hyperplanes by the argument in Section \ref{sec:prelim}. 
First, we define a subset $X'$ of $X$ as follows. We say that $z\in X'$ if and only if there exists $w\in Y$ such that $T(H_{X,z})=H_{Y, w}$.  
The separation property of $A(X, mK_X)$ implies that there exists a unique $w\in Y$ such that the equation $T(H_{X,z})=H_{Y, w}$ holds. 
Therefore, we can define a map $F:X'\to Y$ by setting $F(z)=w$ if $T(H_{X,z})=H_{Y, w}$. Set $Y':= F(X')$. Then $F$ is a bijection from $X'$ to $Y'$. 
Here we take $u_0\in A(X, mK_X)$ and a countable dense subset $\{ u_1, u_2, \cdots \}$ of $A(X, mK_X)$ with $u_0 \neq 0$. Then $\{ v_0:= Tu_0, v_1:= Tu_1, \cdots \}$ is a countable dense subset of $A(Y, mK_Y)$. 
We define maps $I_N, J_N, I_\infty$, and $J_\infty$ as follows. 
$$
\begin{array}{cccc}
I_N :& X & \longrightarrow & \mathbb{C}^{N} \\
& \rotatebox{90}{$\in $} & & \rotatebox{90}{$\in $} \\
& z & \longmapsto & \left( \frac{u_1(z)}{u_0(z)}, \cdots , \frac{u_N(z)}{u_0(z)}  \right) \\
& & & \\
J_N :& Y & \longrightarrow & \mathbb{C}^{N} \\
& \rotatebox{90}{$\in $} & & \rotatebox{90}{$\in $} \\
& w & \longmapsto & \left( \frac{v_1(w)}{v_0(w)}, \cdots , \frac{v_N(w)}{v_0(w)}  \right) \\
& & & \\
\end{array}
$$

$$
\begin{array}{cccc}
I_\infty :& X & \longrightarrow & \mathbb{C}^{\infty} \\
& \rotatebox{90}{$\in $} & & \rotatebox{90}{$\in $} \\
& z & \longmapsto & \left( \frac{u_1(z)}{u_0(z)},  \frac{u_2(z)}{u_0(z)}, \cdots   \right) \\
& & & \\
J_\infty :& Y & \longrightarrow & \mathbb{C}^{\infty} \\
& \rotatebox{90}{$\in $} & & \rotatebox{90}{$\in $} \\
& w & \longmapsto & \left( \frac{v_1(w)}{v_0(w)}, \frac{v_2(w)}{v_0(w)}, \cdots  \right).
\end{array}
$$

The maps are well-defined on $X\setminus u_0^{-1}(0)$ and $Y\setminus v_0^{-1}(0)$, respectively. 
We also obtain that $X'\setminus u_0^{-1}(0) = \cap_{N}I_N^{-1}J_N(Y\setminus v_0^{-1}(0))=I^{-1}_\infty J_\infty(Y\setminus v_0^{-1}(0))$ and $Y'\setminus v_0^{-1}(0)=\cap_{N}J_N^{-1}I_N(X\setminus u_0^{-1}(0))=J_\infty^{-1}I_\infty(X\setminus u_0^{-1}(0)) $. 
The separation property of $A(X, mK_X)$ and $A(Y, mK_Y)$ implies that $I_\infty$ and $J_\infty$ are injective. 
For $z\in I_\infty^{-1}J_\infty(Y\setminus v_0^{-1}(0))$, we get $F(z)=J_\infty^{-1}(I_\infty(z))$. 

Lemma \ref{lem:rudin2} implies the following lemma. 

\begin{lemma}$($cf. \cite[Lemma 2.4]{DWZZ}$)$\label{lem:zeroset}
The measures of $X\setminus X'$ and $Y \setminus Y'$ are zero with respect to any smooth positive volume forms on $X$ and $Y$, respectively. 
\end{lemma}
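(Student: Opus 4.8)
The plan is to deduce everything from the equimeasurability encoded in Lemma \ref{lem:rudin2}, reading it as an identity of pushforward measures. Fix a smooth positive volume form $dV$ on $X$ and set $d\mu = |u_0\wedge\overline{u}_0|^{1/m}$. Since $u_0$ is a nonzero holomorphic section of $mK_X$ and $X$ is connected, its local coefficient function vanishes only on a proper analytic set, so $u_0^{-1}(0)$ is $dV$-null; moreover $d\mu = \rho\, dV$ with density $\rho > 0$ on $X\setminus u_0^{-1}(0)$. Hence a subset of $X\setminus u_0^{-1}(0)$ is $\mu$-null if and only if it is $dV$-null, and it suffices to prove that $(X\setminus u_0^{-1}(0))\setminus X'$ is $\mu$-null. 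The statement for $Y\setminus Y'$ will follow by the symmetric argument obtained by replacing $(T,X,Y,u_\bullet)$ with $(T^{-1},Y,X,v_\bullet)$, because $Y' = F(X')$ is exactly the set attached to $T^{-1}$ in the same way.

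First I would record that Lemma \ref{lem:rudin2} applies to the tuples $\{u_0,\dots,u_N\}$ and $\{v_0,\dots,v_N\}$: the isometry hypothesis and linearity of $T$ give the required equality of $L^{2/m}$-integrals for every $(\alpha_1,\dots,\alpha_N)$, and $u_0\neq 0$, $v_0 = Tu_0\neq 0$. Taking $f = \mathbf{1}_E$ for a Borel set $E\subset\mathbb{C}^N$ in the conclusion of Lemma \ref{lem:rudin2} then yields $(I_N)_\star\mu(E) = (J_N)_\star\nu(E)$, where $d\nu = |v_0\wedge\overline{v}_0|^{1/m}$; that is, the two pushforward measures on $\mathbb{C}^N$ coincide on Borel sets for every $N$. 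Using the identity $X'\setminus u_0^{-1}(0) = \bigcap_N I_N^{-1}J_N(Y\setminus v_0^{-1}(0))$ already recorded above, I would write $(X\setminus u_0^{-1}(0))\setminus X' = \bigcup_N E_N$ with $E_N := (X\setminus u_0^{-1}(0))\cap I_N^{-1}(\mathbb{C}^N\setminus B_N)$ and $B_N := J_N(Y\setminus v_0^{-1}(0))$, reducing the problem to showing $\mu(E_N) = 0$ for each fixed $N$.

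For the fixed-$N$ estimate, observe that $J_N$ maps its entire domain $Y\setminus v_0^{-1}(0)$ into $B_N$ by construction, so the preimage $J_N^{-1}(\mathbb{C}^N\setminus B_N)$ meets the domain in the empty set; since $\nu(v_0^{-1}(0)) = 0$, this forces $(J_N)_\star\nu(\mathbb{C}^N\setminus B_N) = 0$. Transporting this across the pushforward identity gives $\mu(E_N)\leq (I_N)_\star\mu(\mathbb{C}^N\setminus B_N) = (J_N)_\star\nu(\mathbb{C}^N\setminus B_N) = 0$, and summing over $N$ completes the proof. The step that requires genuine care --- and which I expect to be the main obstacle --- is that $B_N$ is the image of a Borel set under the continuous map $J_N$, hence only an analytic (Suslin) set and in general not Borel, so $\mathbb{C}^N\setminus B_N$ need not be Borel. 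To legitimately evaluate both measures on it I would invoke universal measurability of analytic sets: two finite Borel measures on the Polish space $\mathbb{C}^N$ that agree on all Borel sets have a common completion defined on all universally measurable sets, and $B_N$ is such. The same remark makes $I_N^{-1}(B_N)$, and hence $E_N$, $\mu$-measurable. Once this measurability bookkeeping is in place, the chain of equalities above is valid verbatim and the lemma follows.
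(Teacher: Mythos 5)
Your proof is correct and follows exactly the route the paper intends: the paper gives no written argument beyond the remark that Lemma \ref{lem:rudin2} implies the statement (citing \cite[Lemma 2.4]{DWZZ}), and your argument --- pushing $\mu$ and $\nu$ forward by $I_N$ and $J_N$, using equimeasurability to identify the two pushforward measures, and observing that $(J_N)_\star \nu$ assigns zero mass to $\mathbb{C}^N \setminus J_N(Y\setminus v_0^{-1}(0))$ so that each $E_N$ is null --- is precisely that implication worked out in detail. Your additional care with universal measurability of the analytic sets $B_N$ addresses a point that the paper and \cite{DWZZ} leave implicit, but it does not change the method.
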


We fix smooth positive volume forms on $\widetilde{X}$ and $\widetilde{Y}$. 
Let $V\subset Y\setminus v_0^{-1}(0)$ be a local open coordinate such that $V\cap (Y'\setminus v_0^{-1}(0))$ has positive measure in $Y$. 
Since $\widetilde{Y}$ is a Stein manifold, there exist global holomorphic functions $\{ g_j\}_{j=1}^l$ on $Y$ such that $(w_1:=g_1|_V, \cdots , w_l:=g_l|_V)$ defines a coordinate. 
We choose $u_0, u_1, \cdots \subset A(X, mK_X)$ such that $Tu_0=v_0, Tu_1=g_1v_0, \cdots , Tu_l=g_lv_0$. 
Then $v_j/v_0=g_j$ for $1\leq j \leq l$. Set $V':=V\cap (Y'\setminus v_0^{-1}(0)),  U':= F^{-1}(V')\subset X'\setminus u_0^{-1}(0)$. 
Since $V'=J_\infty^{-1}(I_\infty(U'))\subset J_l^{-1}(I_l(U'))$, $J_l^{-1}(I_l(U'))$ also has positive measure in $Y$. 
Then $I_l(U')$ has positive measure in $\mathbb{C}^l$ for the reason that $J_l|_{V'}$ is a biholomorphic coordinate function on $V'$. 
Note that $I_l$ is a non-constant holomorphic map on $X\setminus u_0^{-1}(0)$, and $I_l(X\setminus v_0^{-1}(0))$ also has positive measure in $\mathbb{C}^l$. Then we have $n\geq l$. 
The same argument implies that $l\geq n$. Hence, we get the following lemma. 

\begin{lemma}
The dimensions of $X$ and $Y$ are equal, i.e. $n=l$. 
\end{lemma}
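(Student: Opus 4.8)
The plan is to prove the two inequalities $n\geq l$ and $l\geq n$ separately and symmetrically. In each direction I transport a set of positive measure through the bijection $F$, and then invoke the elementary fact that a holomorphic map out of an $n$-dimensional manifold cannot have image of positive $2l$-dimensional Lebesgue measure in $\mathbb{C}^l$ unless $n\geq l$.

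First I would establish $n\geq l$. By Lemma \ref{lem:zeroset} the locus $Y'\setminus v_0^{-1}(0)$ has full measure in $Y$, so I can select a coordinate neighborhood $V\subset Y\setminus v_0^{-1}(0)$ for which $V':=V\cap(Y'\setminus v_0^{-1}(0))$ carries positive measure. Since $\widetilde{Y}$ is Stein, I realize the $l$ coordinate functions on $V$ as restrictions of global holomorphic functions $g_1,\dots,g_l$ on $Y$, and then use the surjectivity of $T$ to choose $u_0,\dots,u_l\in A(X,mK_X)$ with $Tu_0=v_0$ and $Tu_j=g_jv_0$. This forces $v_j/v_0=g_j$, so that $J_l$ restricts to a biholomorphic coordinate on $V'$; in particular $J_l(V')$ has positive Lebesgue measure in $\mathbb{C}^l$.

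Next I would transport this positive measure to the source. On $U':=F^{-1}(V')$ the defining relation $I_\infty=J_\infty\circ F$ gives $I_l=J_l\circ F$, whence $I_l(U')=J_l(F(U'))=J_l(V')$ has positive measure in $\mathbb{C}^l$. But $I_l$ is a holomorphic map defined on the $n$-dimensional manifold $X\setminus u_0^{-1}(0)$; by the constant-rank theorem its generic rank $k$ satisfies $k\leq\min(n,l)$ and its image has Hausdorff dimension at most $2k\leq 2n$. An image of positive $2l$-dimensional measure therefore forces $2l\leq 2n$, i.e.\ $n\geq l$. Running the identical argument with the roles of $X$ and $Y$ interchanged and $T$ replaced by $T^{-1}$ yields $l\geq n$, and combining the two inequalities gives $n=l$.

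The step I expect to require the most care is the transport of positive measure: I must ensure that $V'$ lies inside the full-measure locus on which $F=J_\infty^{-1}\circ I_\infty$ genuinely holds, so that the identity $I_l=J_l\circ F$ is valid throughout $U'$ and the change of variables truly preserves positivity of measure. This is exactly the point at which Lemma \ref{lem:zeroset}, and hence the equimeasurability extracted from Lemma \ref{lem:rudin2} and Theorem \ref{thm:rudin}, becomes indispensable. The concluding dimension count via the rank of a holomorphic map is then routine once the positive-measure image is in hand.
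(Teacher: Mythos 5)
Your proposal is correct and follows essentially the same route as the paper: use Lemma \ref{lem:zeroset} to find a coordinate patch $V'\subset Y'\setminus v_0^{-1}(0)$ of positive measure, use Steinness and the surjectivity of $T$ to arrange $v_j/v_0=g_j$ so that $J_l$ is a coordinate map there, transport the positive measure through $F=J_\infty^{-1}\circ I_\infty$ to conclude $I_l(U')$ has positive measure in $\mathbb{C}^l$, and deduce $n\geq l$ by the dimension count, then symmetrize. The only difference is that you make explicit the final rank/Hausdorff-dimension argument, which the paper leaves implicit.
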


We can also prove the following lemma. 

\begin{lemma}\label{lem:open}
The sets $X'$ and $Y'$ are open in $X$ and $Y$, respectively. 
\end{lemma}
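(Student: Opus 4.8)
The plan is to show that every point of $X'$ has an open neighborhood contained in $X'$; the statement for $Y'$ then follows by running the same argument with $T^{-1}$, or by noting that $F$ thereby becomes a local biholomorphism with holomorphic inverse. Fix $z_0\in X'$ and put $w_0:=F(z_0)$. Since $F$ and the hyperplanes $H_{X,z}$ do not depend on the choice of the dense family $\{u_k\}$, I may re-center that family at $z_0$: after replacing $u_0$ by a section with $u_0(z_0)\neq0$ (equivalently $v_0(w_0)\neq0$, since $T(H_{X,z_0})=H_{Y,w_0}$), I reproduce verbatim the construction preceding the lemma with the coordinate chart $V$ now taken around $w_0$. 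Thus the Stein manifold $\widetilde{Y}$ furnishes global holomorphic functions $g_1,\dots,g_l$ restricting to a biholomorphic coordinate $J_l|_V=(g_1,\dots,g_l)$ on a neighborhood $V\ni w_0$, and the sections $u_0,\dots,u_l$ are chosen so that $Tu_j=g_jv_0$, whence $v_j/v_0=g_j$.

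Next I would assemble the local holomorphic model. On the open set $W:=I_l^{-1}(J_l(V))$, which contains $z_0$ because $I_l(z_0)=J_l(w_0)$, the map $\Phi:=(J_l|_V)^{-1}\circ I_l$ is holomorphic, takes values in $V$, and satisfies $J_l\circ\Phi=I_l$ on $W$ and $\Phi(z_0)=w_0$. Setting $V':=V\cap(Y'\setminus v_0^{-1}(0))$ and $U':=F^{-1}(V')$, I claim $F=\Phi$ on $U'$: for $z\in U'$ one has $I_\infty(z)=J_\infty(F(z))$ with $F(z)\in V$, and since $J_l$ is injective on $V$ this forces the first $l$ coordinates to determine $F(z)$, i.e. $F(z)=\Phi(z)$. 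Consequently, for every $k$ the holomorphic function $h_k:=u_k/u_0-(v_k/v_0)\circ\Phi$ on $W$ vanishes on $U'$.

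The core step is an identity-theorem upgrade. Suppose $U'$ has positive Lebesgue measure in a neighborhood of $z_0$; then $U'\cap W_0$ has positive measure, where $W_0$ denotes the connected component of $W$ containing $z_0$. Since the zero set of a holomorphic function that is not identically zero is a measure-zero analytic set, each $h_k$ then vanishes on all of $W_0$. This gives $I_\infty=J_\infty\circ\Phi$ on $W_0$ with $\Phi(W_0)\subseteq V\subseteq Y\setminus v_0^{-1}(0)$, so the identity $X'\setminus u_0^{-1}(0)=I_\infty^{-1}J_\infty(Y\setminus v_0^{-1}(0))$ yields $W_0\subseteq X'$ together with $F=\Phi$ on $W_0$. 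Hence $z_0$ is an interior point of $X'$, which is what is needed.

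The \textbf{main obstacle} is precisely the hypothesis invoked in the previous paragraph: that $U'=F^{-1}(V')$ is not thin at $z_0$, i.e. that $F$ does not scatter points near $z_0$ out of the chart $V$. Because $I_l$ is only a \emph{finite} tuple of sections it need not be globally injective, so this cannot be read off from the local coordinate alone; it is genuinely a local properness statement for $F$. I expect to obtain it from the exhaustivity of the $m$-th Bergman kernel (Theorem \ref{thm:exhaustive}): were there a sequence $z_j\to z_0$ in $X'$ whose images $w_j=F(z_j)$ approached $\partial Y$, exhaustivity would force $K_{Y,m}(w_j)\to+\infty$, while the isometry $T$, which relates the two $m$-th Bergman kernels through the correspondence $T(H_{X,z_j})=H_{Y,w_j}$, keeps this quantity comparable to $K_{X,m}(z_j)$; the latter stays bounded because $z_0$ is an interior point of $X$. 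This contradiction would give $F(z_j)\to w_0$, so $F^{-1}(V)$ contains a neighborhood of $z_0$ and, by Lemma \ref{lem:zeroset}, $U'$ has full measure there. Making the Bergman-kernel comparison across $T$ precise, despite the differing volume-form normalizations on $\widetilde{X}$ and $\widetilde{Y}$, is the technical heart of the argument.
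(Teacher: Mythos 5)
Your local model is set up correctly, and the identity-theorem step (propagating $h_k=u_k/u_0-(v_k/v_0)\circ\Phi$ from a positive-measure set to the component $W_0$, then concluding $W_0\subset X'$ from $I_\infty=J_\infty\circ\Phi$) is sound and close in spirit to the paper's argument. But the step you yourself flag as the main obstacle is a genuine gap, and the Bergman-kernel argument you sketch to fill it would not work, for two reasons. First, the failure mode is misidentified: the negation of ``$F^{-1}(V')$ has positive measure near $z_0$'' is not the existence of a sequence $z_j\to z_0$ with $F(z_j)\to\partial Y$. At this stage $F$ is only a set-theoretic bijection $X'\to Y'$ (its continuity is essentially what Lemma \ref{lem:open} is meant to deliver), so nothing prevents it from sending almost every point near $z_0$ to points that stay in the interior of $Y$ but outside $V$, or that accumulate on $v_0^{-1}(0)$; ruling those cases out needs the injectivity of $J_\infty$, not of the finite tuple $J_l$, and is not addressed. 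Second, and more seriously, the kernel comparison across $T$ is circular: relating $K_{X,m}(z_j)$ to $K_{Y,m}(F(z_j))$ requires the pointwise identity of Lemma \ref{lem:unique}, whose coordinate form carries the Jacobian factor $\left|\partial(F_1,\dots,F_n)/\partial(z_1,\dots,z_n)\right|^m$ (this is exactly the displayed formula in the proof of Lemma \ref{lem:pluripolar}); that identity presupposes $F$ differentiable, i.e.\ precisely what openness of $X'$ is needed to establish. In the paper, the exhaustivity of $K_{Y,m}$ and this Jacobian relation are used only \emph{after} Lemma \ref{lem:open}, inside Lemma \ref{lem:pluripolar}; they are not available inside the proof of Lemma \ref{lem:open}.

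The paper closes the loop without any Bergman kernels: it takes $U:=I_n^{-1}(J_n(V))$, which is open and contains $z_0$, and propagates from $U':=U\cap X'$ rather than from $F^{-1}(V')$. Density of $U'$ in $U$ is then free, because $X\setminus X'$ has measure zero by Lemma \ref{lem:zeroset}; the identity (\ref{eq:frac}) holds on $U'$ and extends to all of $U$ by continuity of the holomorphic map $J_n^{-1}\circ I_n$, and at each $z\in U$ it forces $T(H_{X,z})=H_{Y,\,J_n^{-1}(I_n(z))}$, i.e.\ $U\subset X'$. In other words, the correct large set to propagate from is $U\cap X'$, whose size is controlled by Lemma \ref{lem:zeroset}, not $F^{-1}(V')$, whose size you cannot control. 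The one point where the paper is terse---why $F(z)\in V$ for $z\in U\cap X'$, so that $J_\infty^{-1}\circ I_\infty$ may be replaced by $(J_n|_V)^{-1}\circ I_n$---is exactly the subtlety you isolated, but it is resolved by a better choice of functions, not by properness of $F$: since $\widetilde{Y}$ is Stein, one can take $g_1,\dots,g_N$ from a proper holomorphic embedding of $\widetilde{Y}$ into $\mathbb{C}^N$ and set $U:=I_N^{-1}(\Omega)$ for an open $\Omega\subset\mathbb{C}^N$ with $\Omega\cap J_N(\widetilde{Y})=J_N(V)$; then $J_N(F(z))=I_N(z)\in J_N(V)$ and global injectivity of $J_N$ give $F(z)\in V$ for every $z\in U\cap X'$. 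With that choice your set $F^{-1}(V')$ contains $U\cap X'$, so your positive-measure hypothesis becomes automatic and your argument can be completed---by this elementary device, not by the kernel comparison.
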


\begin{proof}[\indent \sc proof of lemma \ref{lem:open}]
We take points $z_0\in X'$ and $F(z_0)=:w_0\in Y'$. %Let $U$ be an open set in $X$ around $z_0$, and set $U':=U\cap X'$ and $F(U')=V'$.  
%By shrinking $U$ if necessary, we can take $u_0\in A(X, mK_X)$ such that $u_0|_{U}\neq 0$ and global holomorphic functions $(g_1, \cdots , g_n)$ on $Y$ which defines a local coordinate function 
We take a local open coordinate $V\subset Y$ around $w_0$, global holomorphic functions $(g_1, \cdots , g_n)$ on $Y$ which define a local coordinate on $V$, and $v_0\in A(Y, mK_Y)$ such that $v_0 \neq 0$ on $V$. 
We choose a countable dense set $u_0, u_1, \cdots \subset A(X, mK_X)$ such that $Tu_0=v_0, Tu_1=g_1v_0, \cdots , Tu_n=g_nv_0$. 
Since $I_n:X\setminus u_0^{-1}(0) \to \mathbb{C}^n$ is holomorphic, $I_n^{-1}(J_n(V))=:U$ is an open set in $X\setminus u_0^{-1}(0)$ around $z_0$. 

Set $U':=U\cap X'$. It follows that 
\begin{equation*}
F=J_\infty^{-1}\circ I_\infty = J_n^{-1}\circ I_n \label{eq:hol}
\end{equation*}
on $U'$. Therefore, we have 
\begin{equation}
\frac{u(z)}{u_0(z)} = \frac{Tu(J_n^{-1}\circ I_n(z))}{Tu_0(J_n^{-1}\circ I_n(z))} \label{eq:frac}
\end{equation}
for all $u\in A(X, mK_X)$ and $z\in U'$. Lemma \ref{lem:zeroset} implies that $U'$ is dense in $U$. 
By continuity of  $J_n^{-1}\circ I_n$, the equation (\ref{eq:frac}) holds on $U$. 
Then we get $U\subset X'$. Since $U$ is also open in $X$ and $z_0 \in U$, we have $X'$ is open in $X$. 
The same argument implies that $Y'$ is open in $Y$. 
\end{proof}

The above argument also implies that $F$ is holomorphic on $X'$, i.e. $F$ is a biholomorphic map from $X'$ to $Y'$. 

Theorem \ref{thm:rudin} and Lemma \ref{lem:rudin2} imply the following result. 

\begin{lemma}$($cf. \cite[Lemma 2.7]{DWZZ}$)$\label{lem:unique}
For any $z\in X'$ and $u\in A(X, mK_X)$, we have 
$$
|u(z)\wedge \overline{u(z)}|^{1/m}= |F^{\star}(Tu)(z)\wedge \overline{F^{\star}(Tu)(z)}|^{1/m}
$$
\end{lemma}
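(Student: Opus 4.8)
The plan is to prove the identity first for the distinguished section $u_0$, where it is the pointwise incarnation of the equimeasurability furnished by Theorem \ref{thm:rudin}, and then to bootstrap to an arbitrary $u$ using linearity of $T$ together with the ratio identity built into $F=J_\infty^{-1}\circ I_\infty$. For the base section, set $\mu:=|u_0\wedge\overline{u}_0|^{1/m}$ and $\nu:=|v_0\wedge\overline{v}_0|^{1/m}$, regarded as measures on $X$ and $Y$. By Lemma \ref{lem:rudin2} (the equimeasurability conclusion of Theorem \ref{thm:rudin}), the pushforward of $\mu$ by $I_N$ agrees with the pushforward of $\nu$ by $J_N$ for every $N$; letting $N\to\infty$ and using that a measure is determined by its finite-dimensional marginals, I would obtain $(I_\infty)_\star\mu=(J_\infty)_\star\nu$ on $\mathbb{C}^\infty$. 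Since $I_\infty=J_\infty\circ F$ on $X'$, the map $J_\infty$ is injective, and $X\setminus X'$, $Y\setminus Y'$ are null by Lemma \ref{lem:zeroset}, this yields $F_\star\mu=\nu$, equivalently $\mu=F^\star\nu$ as measures on $X'$.

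Because $F$ is biholomorphic (Lemma \ref{lem:open}) one has $F^\star\nu=|F^\star v_0\wedge\overline{F^\star v_0}|^{1/m}$, and both $\mu$ and $F^\star\nu$ are continuous non-negative $(n,n)$-forms on $X'$. Hence the measure identity upgrades to the pointwise identity
$$
|u_0(z)\wedge\overline{u_0(z)}|^{1/m}=|F^\star v_0(z)\wedge\overline{F^\star v_0(z)}|^{1/m},\qquad z\in X'.
$$
For a general section I would exploit that $F=J_\infty^{-1}\circ I_\infty$ encodes the exact ratio identity $u_k/u_0=(v_k/v_0)\circ F$ on $X'\setminus u_0^{-1}(0)$ for every member of the dense sequence. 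By linearity of $T$ the identity $u/u_0=(Tu/v_0)\circ F=F^\star(Tu)/F^\star v_0$ then holds verbatim for every finite linear combination $u=\sum_k c_k u_k$. To reach an arbitrary $u\in A(X,mK_X)$ I would approximate it in the $L^{2/m}$-metric by finite combinations $u^{(j)}$; since $T$ is an isometry, $Tu^{(j)}\to Tu$ as well. Using that $L^{2/m}$-convergence of holomorphic pluricanonical sections forces local uniform convergence on compact subsets, the pointwise values being dominated by the locally finite, continuous $m$-th Bergman kernel of Definition \ref{def:mbergman}, the ratios $u^{(j)}/u_0$ and $Tu^{(j)}/v_0$ converge locally uniformly, so the identity passes to the limit.

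This gives $u(z)/u_0(z)=F^\star(Tu)(z)/F^\star v_0(z)$ on $X'\setminus u_0^{-1}(0)$. Writing $u=(u/u_0)\,u_0$ and $|u\wedge\overline u|^{1/m}=|u/u_0|^{2/m}\,\mu$, I would combine this with the base identity for $\mu$ to obtain the desired equality on $X'\setminus u_0^{-1}(0)$; since $u_0^{-1}(0)$ is a nowhere dense analytic set and both sides are continuous on $X'$, the equality extends to all of $X'$.

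The main obstacle is the passage from the measure-level equimeasurability to a genuine pointwise identity of densities: one must transport the pushforward equality through the injective maps $I_\infty$ and $J_\infty$ while correctly discarding the null sets $X\setminus X'$ and $Y\setminus Y'$, and then invoke continuity of the densities to remove the ``almost everywhere''. A secondary technical point, needed only to descend from finite combinations to an arbitrary $u$, is the implication from $L^{2/m}$-convergence to local uniform convergence; this relies on the finiteness of the $m$-th Bergman kernel on compact subsets and must be handled with some care, since for $m>2$ the functional $\|\cdot\|_{X,m}$ is merely a pseudonorm.
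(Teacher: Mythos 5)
Your proof is correct and takes essentially the same route the paper intends: the paper offers no written argument for this lemma, deducing it directly from Theorem \ref{thm:rudin} and Lemma \ref{lem:rudin2} as in \cite[Lemma 2.7]{DWZZ}, and your argument is a faithful, detailed implementation of exactly that scheme (pushforward equality of $|u_0\wedge\overline{u}_0|^{1/m}$ and $|v_0\wedge\overline{v}_0|^{1/m}$ through $I_\infty$, $J_\infty$, change of variables under the biholomorphism $F=J_\infty^{-1}\circ I_\infty$, then the ratio identity plus density and continuity for general $u$). The technical points you flag (Borel measurability of $J_\infty$-images, and the passage from $L^{2/m}$-convergence to locally uniform convergence via local boundedness of $K_{X,m}$) are handled correctly, so no gap remains.
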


We will show that the biholomorphic map $F:X'\to Y'$ can be extended to a biholomorphic map from $X$ to $Y$. 
Before proving it, we give the following result. 

%\begin{lemma}\label{lem:bergmankernel}
%For any point $a\in \partial X$, there exists $\eta \in A(X, mK_X)$ such that $ \limsup_{z\to a}|\eta(z)|\to + \infty$. 
%\end{lemma}

%\begin{proof}
%We take a local open coordinate $U\subset \widetilde{X}$ around $a$, and take global holomorphic functions $(g_1, \cdots , g_n)$ on $\widetilde{X}$ which define a local coordinate on $U$. 
%Set $\widetilde{L}:= \{ x\in \widetilde{X} \mid g_2(x)=\cdots =g_n(x)=0\}$ and $L:=\widetilde{L}\cap X$. Since $\widetilde{L}_{reg}$ is a non-compact Riemann surface in $\widetilde{X}$ and $a\in \widetilde{L}_{reg}$, we can take a holomorphic function $f_L$ on $\widetilde{L}_{reg}$ such that 
%$a$ is a zero point of $f_L$ of order $1$ and $f_L\neq 0$ on $\widetilde{L}_{reg}\setminus \{ a \}$. 
%Then $1/f_L$ is a holomorphic $L^{2/m}$-function on $L_{reg}$ with respect to a smooth volume form on $L_{reg}$. 
%Using $L^{2/m}$-Ohsawa-Takegoshi extension theorem (cf.  ), 
%we obtain a holomorphic $L^{2/m}$-function $F$ on $X$ such that $F|_{L_{reg}}=1/f_L$. Take a pluricanonical section $\eta \in H^0(\widetilde{X}, mK_{\widetilde{X}})$ such that $\eta(a)\neq 0$. 
%Then we can conclude that $F\eta \in A(X, mK_X)$ satisfies the condition of Lemma \ref{lem:bergmankernel}. 
%\end{proof}

%Then we complete the proof. 

\begin{lemma}\label{lem:pluripolar}
Let $S:=X\setminus X'$. 
Then $S$ is a closed pluripolar set, i.e. for any point $a \in S$, there exist an open neighborhood $U$ of $a$ and a plurisubharmonic function $\rho $ on $U$ such that $S\cap U\subset \rho^{-1}(-\infty)$. 
\end{lemma}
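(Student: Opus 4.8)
The plan is to localize the problem and, near each point of $S$, exhibit an explicit plurisubharmonic function whose $-\infty$-locus contains $S$. First note that $S=X\setminus X'$ is closed, since $X'$ is open by Lemma \ref{lem:open}; by Josefson's theorem it therefore suffices to prove that $S$ is locally pluripolar. Fix $a\in S$. Using the separation property of $A(X,mK_X)$ I would choose $u_0\in A(X,mK_X)$ with $u_0(a)\neq0$ and set $v_0:=Tu_0\not\equiv0$; since $X'$ and the biholomorphism $F:X'\to Y'$ are defined intrinsically through the hyperplanes $H_{X,z}$, the identity of Lemma \ref{lem:unique} stays available for this choice. Shrinking to a coordinate neighborhood $U\ni a$ on which $u_0\neq0$, and writing $u_0=f_{u_0}(dz)^{\otimes m}$, $v_0=f_{v_0}(dw)^{\otimes m}$ locally, Lemma \ref{lem:unique} applied to the single section $u_0$ gives
$$
|J_F(z)|^{2}=\frac{|f_{u_0}(z)|^{2/m}}{|f_{v_0}(F(z))|^{2/m}},\qquad z\in U\cap X',
$$
where $J_F$ is the nonvanishing holomorphic Jacobian of $F$; in particular $v:=\log|J_F|^{2}$ is pluriharmonic on $U\cap X'$.

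The heart of the matter is to show that $v(z)\to-\infty$ as $X'\ni z\to a$. I would first prove that $F(z)\to\partial Y$ along every such sequence. If not, some subsequence gives $F(z_j)\to w_0\in Y$; feeding $s=u_0$ and various $s$ into the intrinsic relation $s(z)/u_0(z)=Ts(F(z))/v_0(F(z))$ (valid on $U\cap X'$) and passing to the limit shows first that $v_0(w_0)\neq0$, hence that the coordinate ratios $v_k/v_0$ are continuous at $w_0$; the continuity of $u_k/u_0$ at $a$ then forces $I_\infty(a)=J_\infty(w_0)$, so that $a\in I_n^{-1}(J_n(V))\subset X'$ by the argument of Lemma \ref{lem:open}, contradicting $a\in S$. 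Once $F(z)\to\partial Y$ is known, I invoke the exhaustivity of the $m$-th Bergman kernel of the hyperconvex domain $Y$ (Theorem \ref{thm:exhaustive}): taking the supremum in Lemma \ref{lem:unique} and using that $T$ is an isometry onto, the transformation law
$$
\log K'_{X,m}(z)=\log K'_{Y,m}(F(z))+\log|J_F(z)|^{2}+\psi(z),\qquad z\in U\cap X',
$$
holds, where $\psi$ is the logarithm of the ratio of the fixed volume densities of $dV_{\widetilde{X}}$ and $F^{\star}dV_{\widetilde{Y}}/|J_F|^{2}$ and is bounded near $a$ because $F(z)$ stays in the compact set $\overline{Y}$. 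Since $\log K'_{X,m}(z)$ stays finite (continuity at the interior point $a$) while $\log K'_{Y,m}(F(z))\to+\infty$, I obtain $v(z)=\log|J_F(z)|^{2}\to-\infty$.

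Finally I would upgrade $v$ to a plurisubharmonic function on all of $U$. Put $\rho:=v$ on $U\cap X'$ and $\rho:=-\infty$ on $U\cap S$. For each $\varepsilon>0$ the truncation $\max(v,-1/\varepsilon)$ is plurisubharmonic on $U\cap X'$, and because $v<-1/\varepsilon$ in a full neighborhood of $S$ by the previous step, it is locally equal to the constant $-1/\varepsilon$ near $S$; hence it extends to a plurisubharmonic function on all of $U$. Letting $\varepsilon\downarrow0$ these truncations decrease to $\rho$, so $\rho$ is plurisubharmonic on $U$ and not identically $-\infty$ (it is finite on the dense set $U\cap X'$, dense by Lemma \ref{lem:zeroset}). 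As $S\cap U\subset\{\rho=-\infty\}$ and $a$ was arbitrary, $S$ is locally pluripolar, and being closed it is a closed pluripolar set. The step I expect to be the crux is the divergence $v\to-\infty$: this is exactly where hyperconvexity of $Y$ enters, through the exhaustivity of $K_{Y,m}$ together with the cluster-set argument forcing $F(z)\to\partial Y$, and it is the divergence to $-\infty$ — rather than mere boundedness from above, which would not suffice — that makes the truncation argument produce a plurisubharmonic extension without presupposing that $S$ is pluripolar.
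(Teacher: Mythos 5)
Your proposal is correct and follows essentially the same route as the paper: show by the cluster-set argument that $F(z)\to \partial Y$ as $X'\ni z\to a$, use the exhaustivity of $K_{Y,m}$ (Theorem \ref{thm:exhaustive}, where the hyperconvexity of $Y$ enters) together with the local boundedness of $K_{X,m}$ and the isometry property to force $\log |J_F|^2\to -\infty$ at $S$, and then extend $\log|J_F|^2$ by $-\infty$ across $S$ to obtain the plurisubharmonic function $\rho$. The only differences are presentational: the paper derives the divergence of the Jacobian from near-extremal (peak) sections for $K'_{Y,m}$ at $F(a_j)$ transported by $T^{-1}$, rather than from your supremum-form transformation law (which is the same computation after taking the supremum over the unit ball), and it establishes plurisubharmonicity of $\rho$ by directly checking upper semicontinuity and the mean-value inequality instead of your truncation argument with $\max(v,-1/\varepsilon)$ decreasing to $\rho$; both variants are sound, and your unnecessary appeal to Josefson's theorem is harmless since the lemma's notion of pluripolarity is already local.
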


\begin{proof}[\indent \sc proof of lemma \ref{lem:pluripolar}]
We take %an open coordinate $U=(z_1, \cdots , z_n)\subset X$ around $a$ such that $F(U\setminus S)\Subset V\subset Y$ for some open coordinate $V=(w_1, \cdots , w_n)$, and 
any convergent sequence $\{ a_j\}\subset X\setminus S \to a\in S$. 
Since $Y$ is relatively compact, by passing to a subsequence, we can assume that there is $b\in \overline{Y}$ such that $F(a_j)\to b$. 
If $b\in Y$, we have $a\in X'$ from equation \ref{eq:frac} or the definition of $F$. Therefore $b\in \partial Y$. 

We take open coordinates $U\Subset \widetilde{U}_{\{z_1, \cdots , z_n\}}\subset \widetilde{X}$ around $a$ and $V\Subset \widetilde{V}_{\{w_1, \cdots , w_n\}}\subset \widetilde{Y}$ around $b$. 
In this local setting, the equation of Lemma \ref{lem:unique} gives us the following expression 
$$
|g_u(z)|=|h_{Tu}(F(z))|\left| \frac{\partial (F_1, \cdots , F_n)}{\partial (z_1, \cdots , z_n)}(z)\right|^m 
$$
for $z\in X'$ and any $u\in A(X, mK_X)$. Here $F_j=w_j\circ F$, $u=g_u (dz_1 \wedge \cdots \wedge dz_n)^{\otimes m}$, and $Tu=h_{Tu}(dw_1\wedge \cdots \wedge dw_n)^{\otimes m}$. 
The  exhaustivity of $K_{Y, m}$ implies that $K_{Y, m}(F(a_j))\to +\infty $. 
The choices of coordinates imply that boundary behaviors of $K_{X, m}/(\sqrt{-1}dz_1\wedge d\overline{z}_1 \wedge \cdots \sqrt{-1}dz_n\wedge d\overline{z}_n)$ and $K_{Y, m}/(\sqrt{-1}dw_1\wedge d\overline{w}_1 \wedge \cdots \sqrt{-1}dw_n\wedge d\overline{w}_n)$ coincide with $K'_{X, m}$ and $K'_{Y, m}$ on $U\cap X$ and $V\cap Y$, respectively. 
Then, for each $j$, we have $v_j \in A(Y, mK_Y)$ such that $\| v_j \|_{Y, m}=1$ and $|h_{v_j}(F(a_j))| \to +\infty$ as $j \to +\infty$, where $v_j = h_{v_j}(dw_1\wedge \cdots \wedge dw_n)^{\otimes m}$. 
Set $u_j := T^{-1}(v_j)$. Then we have 
$$
|g_{u_j}(a_j)|=|h_{v_j}(F(a_j))|\left| \frac{\partial (F_1, \cdots , F_n)}{\partial (z_1, \cdots , z_n)}(a_j)\right|^m, 
$$
and $\| u_j \|_{X, m}=1$. Since $K_{X, m}$ is locally bounded on $X$, the left-hand side has an upper bound. Then it follows that 
$$
\left| \frac{\partial (F_1, \cdots , F_n)}{\partial (z_1, \cdots , z_n)}(a_j)\right| \to 0.
$$

We define a function $\rho : U \to \mathbb{R}\cup \{ -\infty\}$ as 
$$
\rho (z)= \begin{cases}
\log \left| \frac{\partial (F_1, \cdots , F_n)}{\partial (z_1, \cdots , z_n)}(z)\right| & (z\in U\setminus S) \\
-\infty & (z\in U\cap S). 
\end{cases}
$$
This function satisfies the mean-value inequality. The above argument ensures that $\rho$ is upper semi-continuous on $U$ for the following reason. 
If $\limsup_{z\to a} |\rho (z)|> -\infty $ for some point $a\in S$, passing to a subsequence, we have that $K_{X, m}(z)\to +\infty$ as $z\to a$, which is a contradiction. 
Hence, $\rho$ is a plurisubharmonic function on $U$. By definition, we see that $S \cap U \subset \rho^{-1}(-\infty)$. 

\end{proof}

By using the following fact, we can prove Theorem \ref{thm:stein}. 

\begin{theorem}$($cf. \cite[Theorem 5.24, Corollary 5.25]{DemCom}$)$\label{thm:demailly}
Let $A\subset X$ be a closed pluripolar set in a complex analytic manifold $X$. 
Then \\
$({\rm i})$ every plurisubharmonic function $v$ on $X\setminus A$ that is locally bounded above near $A$ extends uniquely into a function $\widetilde{v}$ on $X$, and  \\
$({\rm ii})$ every holomorphic function $f$ on $X\setminus A$ that is locally bounded near $A$ extends to a holomorphic function on $X$. 
\end{theorem}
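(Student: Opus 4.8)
Since the statement is classical and entirely local, the plan is to reduce both parts to a ball $U$ on which the pluripolar set is cut out by a single plurisubharmonic function, writing $A\cap U\subset\rho^{-1}(-\infty)$ with $\rho$ plurisubharmonic and $\rho\le 0$, and then to exploit the standard fact that a pluripolar set has Lebesgue measure zero.

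For part (i) I would first normalize so that $v\le 0$ on $U\setminus A$, and then for each $j\in\mathbb{N}$ consider $v_j:=v+\frac{1}{j}\rho$, which is plurisubharmonic on $U\setminus A$, bounded above, and satisfies $v_j(z)\to-\infty$ as $z\to A$ (upper semicontinuity of $\rho$ together with $\rho|_A=-\infty$ forces $\lim_{z\to a}\rho(z)=-\infty$ at every $a\in A$). Extending each $v_j$ by $-\infty$ on $A$ yields a function $\tilde v_j$ on $U$ that I claim is plurisubharmonic: upper semicontinuity holds precisely because $v_j\to-\infty$ on approach to $A$, matching the assigned value $-\infty$, while the sub-mean value inequality survives because $A$ is Lebesgue-null, so every ball average is computed over $U\setminus A$ where $v_j$ is already plurisubharmonic (and $v_j\not\equiv-\infty$, hence the limit is locally integrable). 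Since $\frac{1}{j}\rho\nearrow 0$, the sequence $\tilde v_j$ increases; its upper semicontinuous regularization $\tilde v:=(\sup_j\tilde v_j)^{*}$ is then plurisubharmonic on $U$ and coincides with $v$ on the dense open set $U\setminus A$. Uniqueness follows at once, since any two plurisubharmonic extensions agree on the dense set $U\setminus A$ and are therefore equal everywhere by upper semicontinuity combined with the sub-mean value property.

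For part (ii), given $f$ bounded and holomorphic on $X\setminus A$, I would apply (i) to the bounded pluriharmonic functions $\pm\,\mathrm{Re}\,f$ and $\pm\,\mathrm{Im}\,f$. Writing $g$ for the extension of $\mathrm{Re}\,f$ and $h$ for that of $-\mathrm{Re}\,f$, both are plurisubharmonic, satisfy $g+h\ge 0$ by superadditivity of $\limsup$, and obey $g+h=0$ off $A$; as $g+h$ is plurisubharmonic with vanishing ball averages (again $A$ is null), one concludes $g+h\equiv 0$, so $g$ is pluriharmonic on all of $X$. Treating the imaginary part identically produces pluriharmonic extensions of $\mathrm{Re}\,f$ and $\mathrm{Im}\,f$; locally realizing the extended real part as the real part of a holomorphic function and fixing the additive imaginary constant on a component of $U\setminus A$ recovers a holomorphic $F$ with $F=f$ on $U\setminus A$, which is the sought extension. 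Here local connectedness of $U\setminus A$ (pluripolar sets do not locally disconnect) ensures the constants are chosen consistently.

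The main obstacle is the plurisubharmonicity of the $-\infty$-extended auxiliary functions $\tilde v_j$ in part (i): this is exactly the point at which pluripolarity is indispensable beyond mere measure-zero, since it is the existence of $\rho$ with $\rho|_A=-\infty$ that drives $v_j\to-\infty$ at $A$ and thereby secures upper semicontinuity of the extension. Once this is in place, passing to the increasing limit and regularizing, and reassembling the holomorphic datum from its pluriharmonic real and imaginary parts in (ii), are routine applications of standard pluripotential theory.
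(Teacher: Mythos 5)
The paper does not actually prove this statement: it is imported verbatim from Demailly's book (Theorem 5.24 and Corollary 5.25 of the cited reference) and used as a black box in the proof of Theorem \ref{thm:stein}, so there is no internal argument to compare yours against; what you have written is a correct reconstruction of the standard proof from that reference, namely the auxiliary functions $v+\tfrac{1}{j}\rho$ extended by $-\infty$ across $A$ followed by upper semicontinuous regularization of the increasing limit for (i), and recovery of the holomorphic extension from pluriharmonic extensions of $\pm\,\mathrm{Re}\,f$, $\pm\,\mathrm{Im}\,f$ for (ii). One point of hygiene: your justification of the sub-mean value inequality for the extended $\tilde v_j$ appeals to Lebesgue-nullity of $A$, but that is not where nullity enters --- at a point of $U\setminus A$ one simply takes balls small enough to avoid the \emph{closed} set $A$, and at a point of $A$ the inequality is trivial because the assigned value is $-\infty$. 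Nullity of $A$ (and of $\{\rho=-\infty\}$) is instead what makes the later identifications work: $\sup_j \tilde v_j$ agrees with $v$ only almost everywhere on $U\setminus A$, since $\rho$ may equal $-\infty$ at points outside $A$, and one concludes via the fact that two plurisubharmonic functions equal almost everywhere are equal --- which is also the real content of your uniqueness step and of the identity $g+h\equiv 0$ in (ii).
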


\begin{proof}[\indent \sc proof of theorem \ref{thm:stein}]
Taking a local coordinate or embedding $Y$ into a bounded domain in the complex Euclidean space, we can regard $F$ as a bounded map. 
Since $S$ is a closed pluripolar set, %for each point $a\in S$, 
there exists a holomorphic function $\widetilde{F}$ on $X$ such that $\widetilde{F}|_{X\setminus S}=F$. 
%Such an extension is uniquely determined by the identity theorem. 
We also denote by $F$ this extension. 
The hyperconvexity of $Y$ implies the existence of a negative plurisubharmonic function $\varphi $ on $Y$ such that $\{ w\in Y \mid \varphi(w)<c\}$ is relatively compact for any $c<0$. 
Then $\widetilde{\varphi}:=\varphi \circ F$ is also negative plurisubharmonic function on $X\setminus S$, and can be extended to a plurisubharmonic function on $X$ by Theorem \ref{thm:demailly}. 
Hence, $\widetilde{\varphi}$ attains its maximum on $S$ for the reason that $F(S)\subset \overline{Y}\setminus Y$. 
By the maximum principle, $\varphi$ must be a constant function, which is a contradiction. Then $S=\emptyset $. 

By applying the same method to $Y$, we obtain $Y\setminus Y'=\emptyset$. 
Consequently, $F$ is globally defined on $X$ and a biholomorphic map from $X$ to $Y$. 
\end{proof}

%The above arguments are almost the same as the proofs in \cite[Section 2.3]{DWZZ}. 
%For the sake of clarity, we show the proof here. 

We can also prove the uniqueness of $F$. The equation 
$$
|u(z)\wedge \overline{u(z)}|^{1/m}= |F^{\star}(Tu)(z)\wedge \overline{F^{\star}(Tu)(z)}|^{1/m}
$$
for any $u\in A(X, mK_X)$ implies that the condition $u(z)=0$ is equivalent to $Tu(F(z))=0$. 
Namely, $F$ is uniquely determined by $T$ as $T(H_{X, z})=H_{Y, F(z)}$.

\section{Proof of Theorem \ref{thm:fibration}}\label{sec:relative}
In this section, we prove Theorems \ref{thm:fibration} and \ref{thm:hariawase}. %Before proving them, we prepare a lemma. 
A key ingredient to prove them is the following $L^{2/m}$-variant of the Ohsawa-Takegoshi extension theorem. 

\begin{theorem}$($cf. \cite{BP}, \cite{GZ}, \cite{HPS}, \cite{PT}$)$\label{thm:lpohsawa}
Let $X$, $B$, $f$, and the notation be the same as in Theorem \ref{thm:fibration}. 
Then for any $t\in B$ and $u\in A(X_t, mK_{X_t})$, there exists an extension $U\in A(X, mK_X)$ such that 
$U|_{X_t}=u\wedge (df_1 \wedge \cdots \wedge df_r)^{\otimes m}$ and 
$$
\int_X |U\wedge \overline{U}|^{1/m} \leq C \int_{X_t} |u\wedge \overline{u}|^{1/m}, 
$$
for positive constant $C>0$ which is independent of $t$, $u$, and $U$. 
\end{theorem}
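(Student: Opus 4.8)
The plan is to reduce the $L^{2/m}$ extension problem to the ordinary \emph{twisted} $L^2$ Ohsawa--Takegoshi extension theorem, and then to remove the dependence of the constant on the auxiliary data by an iteration argument. First, since $B$ is an open ball, the relative pluricanonical bundle $mK_{X/B}$ is identified with $mK_X$ through the nowhere vanishing relative form $(df_1\wedge\cdots\wedge df_r)^{\otimes m}$, and its restriction to a fiber is $mK_{X_t}$; under this identification, extending $u$ as required is the same as extending it as a section of $mK_{X/B}$. I would write $m=1+(m-1)$ and regard $u$ as a section of $K_{X/B}\otimes L$ with $L:=(m-1)K_{X/B}$. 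Given any holomorphic section $W\in H^0(X,mK_{X/B})$, I equip $L$ with the singular Hermitian metric $h_W$ of weight $\frac{m-1}{m}\log|W|^2$; its curvature $\frac{m-1}{m}\sqrt{-1}\ddbar\log|W|^2\ge0$ is semipositive, and a local computation with $W=g_W\,(dz_1\wedge\cdots\wedge dz_n)^{\otimes m}$ shows, for $w=W|_{X_t}$,
\[
\int_{X_t}|w|^2_{h_W|_{X_t}}=\int_{X_t}|w\wedge\overline w|^{1/m},
\]
since the canonical $K_{X_t}$-pairing supplies a factor $|g_W|^2$ while the weight contributes $|g_W|^{-2(m-1)/m}$. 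This is the mechanism turning the $L^{2/m}$ norm into a genuine $L^2$ norm of $u$ against a positively curved twist whose fiber restriction is built from $u$ itself.

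Next I would run the iteration. Because $f^{-1}(B')$ is Stein for a slightly smaller ball and $X_t$ is a closed submanifold, Cartan's Theorem B (or a single application of the $L^2$ theorem) produces \emph{some} extension $U_0\in A(X,mK_X)$ of $u$, of finite but a priori uncontrolled norm. Using $U_0$ to define $h_{U_0}$ on $L$ and applying the twisted relative Ohsawa--Takegoshi theorem, I obtain $U_1$ extending $u$ with
\[
\int_X|U_1|^2_{h_{U_0}}\le C_0\int_{X_t}|u|^2_{h_{U_0}|_{X_t}}=C_0\,A,\qquad A:=\int_{X_t}|u\wedge\overline u|^{1/m},
\]
the last equality holding because $U_0|_{X_t}=u$. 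Applying H\"older with exponents $m$ and $m/(m-1)$ to the pointwise identity of densities $|U_1|^{2/m}=\bigl(|U_1|^2|U_0|^{-2(m-1)/m}\bigr)^{1/m}\bigl(|U_0|^{2/m}\bigr)^{(m-1)/m}$ yields $\|U_1\|_{X,m}\le (C_0A)^{1/m}\|U_0\|_{X,m}^{(m-1)/m}$. Iterating the scheme $U_k\mapsto h_{U_k}\mapsto U_{k+1}$ gives $\|U_{k+1}\|_{X,m}\le (C_0A)^{1/m}\|U_k\|_{X,m}^{(m-1)/m}$, which after taking logarithms is an affine contraction of ratio $(m-1)/m<1$ (this is where $m\ge2$ enters) with fixed point $\log(C_0A)$, so $\limsup_k\|U_k\|_{X,m}\le C_0A$. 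A uniform $L^{2/m}$ bound forces a local sup bound on the $U_k$, so Montel's theorem extracts a locally uniformly convergent subsequence whose holomorphic limit $U$ satisfies $U|_{X_t}=u$ and, by Fatou, $\|U\|_{X,m}\le C_0A$; this is the asserted estimate with $C=C_0$.

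The main obstacle is the twisted extension step: I must invoke a form of Ohsawa--Takegoshi that extends from the fiber $X_t$ to the total space $X$ a section of $K_{X/B}\otimes L$ for the singular, semipositively curved metric $h_{U_k}$, with a constant $C_0$ that is \emph{uniform in $t\in B$ and independent of $u$ and of the auxiliary sections $U_k$}. Establishing this uniformity is precisely where the relatively compact Stein structure over the ball is used: the constant in the fibered estimate can be taken to depend only on the radius of $B$ and on geometric bounds such as $\sup_X|f_i|$, and this is the content adapted from \cite{BP}, \cite{GZ}, \cite{HPS}, and \cite{PT}. Secondary technical points I would check are that $h_{U_k}$ is an admissible (possibly $-\infty$-poled) weight for the $L^2$ theorem, which is harmless since the restricted integral $\int_{X_t}|u|^2_{h_{U_k}|_{X_t}}=A$ is finite, and that the local sup bound invoked in the compactness argument holds, which follows from the local boundedness of the $m$-th Bergman kernel $K_{X,m}$.
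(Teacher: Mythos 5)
First, a point of reference: the paper contains no proof of Theorem \ref{thm:lpohsawa} at all --- it is invoked as a known result via the citations \cite{BP}, \cite{GZ}, \cite{HPS}, \cite{PT} --- so your attempt can only be measured against the arguments in that literature. Measured that way, your scheme is exactly the standard one of Berndtsson--P\u{a}un, P\u{a}un--Takayama, and Hacon--Popa--Schnell: split $mK=K+(m-1)K$, endow $(m-1)K_{X/B}$ with the singular semipositive metric of weight $\frac{m-1}{m}\log|U_k|^2$ built from the previous extension, apply the twisted $L^2$ extension theorem, convert back via H\"older (your exponent bookkeeping and the restriction identity $\int_{X_t}|u|^2_{h}=\int_{X_t}|u\wedge\overline{u}|^{1/m}$ are correct), and close the loop with the affine contraction $\log\|U_{k+1}\|_{X,m}\le\frac{1}{m}\log(C_0A)+\frac{m-1}{m}\log\|U_k\|_{X,m}$ followed by Montel and Fatou. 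So the route is the intended one and the quantitative core is sound; in effect you have reconstructed the proof that the paper delegates to its references.

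There are, however, two genuine gaps in the write-up. The serious one is your assertion that $f^{-1}(B')$ is Stein for a slightly smaller ball $B'$. The paper's definition of a (relatively compact) Stein morphism only guarantees that each $t\in B$ has \emph{some} small neighborhood $U$ with $f^{-1}(U)$ Stein; local Steinness over the base does not in general imply Steinness, or even weak pseudoconvexity, of the preimage of a larger set --- this is exactly the phenomenon behind the counterexamples to the Serre problem (Skoda, Demailly), where locally trivial, hence locally Stein, fibrations over Stein bases have non-Stein total spaces. Both your initial extension via Cartan B and, more importantly, every application of the twisted Ohsawa--Takegoshi theorem require a (weakly) pseudoconvex ambient space, so as written the iteration is only licensed on the small Stein pieces $f^{-1}(U)$, and you still owe an argument producing one extension over all of $X$ with a constant independent of $t$; to be fair, the paper's bare citation glosses over the same point. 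The second, more minor, gap concerns $U_0$: Cartan B gives no norm control, and since $X$ (or even $f^{-1}(B')$) need not be relatively compact, finiteness of $\|U_0\|_{X,m}$ is not automatic. The standard fix is to run the entire iteration on a relatively compact exhaustion, where finiteness of the starting norm is free, observe that the resulting bound $C_0A$ is independent of the exhaustion step, and extract the final $U$ by a diagonal Montel/Fatou argument; your last paragraph gestures at this but it should be made explicit, since it is also what makes the constant independent of $u$ and $t$ in the limit.
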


\begin{lemma}\label{lemma:fiberwise}
The linear isomorphism $T$ in Theorem \ref{thm:fibration} induces fiberwise linear isometries $\{ T_t\}_{t\in B}$, i.e.  
$$
T_t : A(X_t, mK_{X_t}) \longrightarrow A(Y_t, mK_{Y_t})
$$
is a linear isomorphism and 
$$
\int_{X_t} |u\wedge \overline{u} |^{1/m} = \int_{Y_t} | T_tu \wedge \overline{T_tu}|^{1/m} 
$$
for all $t\in B$ and $u \in A(X_t, mK_{X_t})$. 
\end{lemma}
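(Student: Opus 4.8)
The plan is to define $T_t$ by the three-step recipe \emph{extend--apply $T$--restrict}. Fix $t\in B$ and $u\in A(X_t, mK_{X_t})$. By the $L^{2/m}$-variant of the Ohsawa--Takegoshi theorem (Theorem \ref{thm:lpohsawa}) there is an extension $U\in A(X, mK_X)$ with $U|_{X_t}=u\wedge(df_1\wedge\cdots\wedge df_r)^{\otimes m}$, i.e.\ $U_t=u$; this is precisely where the extension theorem is indispensable, since it guarantees that \emph{every} element of $A(X_t, mK_{X_t})$ is the fiber restriction of a global section of finite $L^{2/m}$-norm. I would then set $T_t u := (TU)_t$, where $(TU)_t\in H^0(Y_t, mK_{Y_t})$ is the fiber component of $TU\in A(Y, mK_Y)$ determined by $TU|_{Y_t}=(TU)_t\wedge(dg_1\wedge\cdots\wedge dg_r)^{\otimes m}$.

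The first and central step is well-definedness, i.e.\ independence of the chosen extension. Suppose $U, U'\in A(X, mK_X)$ both restrict to $u$ on $X_t$ and put $W:=U-U'$, so that $W_t=0$ and hence $\int_{X_t}|W_t\wedge\overline{W_t}|^{1/m}=0$. Applying the hypothesis (\ref{eq:fiberint}) to $W$ then forces $\int_{Y_t}|(TW)_t\wedge\overline{(TW)_t}|^{1/m}=0$. Since $(TW)_t$ is a holomorphic $m$-canonical form on the \emph{connected} fiber $Y_t$, the vanishing of this integral of a non-negative $(l,l)$-form forces $(TW)_t\equiv 0$, so $(TU)_t=(TU')_t$ and $T_t$ is well-defined. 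Linearity is then immediate, because if $U, U'$ extend $u, u'$ then $U+U'$ extends $u+u'$ and $\lambda U$ extends $\lambda u$, whence $T_t(u+u')=(TU)_t+(TU')_t=T_tu+T_tu'$ using linearity of $T$ and of the fiber restriction. The isometry property is read off directly from (\ref{eq:fiberint}): choosing any extension $U$ of $u$,
$$
\int_{X_t}|u\wedge\overline{u}|^{1/m}=\int_{X_t}|U_t\wedge\overline{U_t}|^{1/m}=\int_{Y_t}|(TU)_t\wedge\overline{(TU)_t}|^{1/m}=\int_{Y_t}|T_tu\wedge\overline{T_tu}|^{1/m},
$$
which in particular shows $T_tu\in A(Y_t, mK_{Y_t})$ since the right-hand side is finite.

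It remains to prove that each $T_t$ is bijective. Injectivity is immediate from the displayed isometry: $T_tu=0$ gives $\int_{X_t}|u\wedge\overline{u}|^{1/m}=0$, hence $u=0$. For surjectivity I would run the symmetric construction with $T^{-1}$ in place of $T$. First note that $T^{-1}$ satisfies the analogue of (\ref{eq:fiberint}): applying (\ref{eq:fiberint}) to $U=T^{-1}V$ for $V\in A(Y, mK_Y)$ yields $\int_{Y_t}|V_t\wedge\overline{V_t}|^{1/m}=\int_{X_t}|(T^{-1}V)_t\wedge\overline{(T^{-1}V)_t}|^{1/m}$. Given $v\in A(Y_t, mK_{Y_t})$, the analogous extension theorem for $g:Y\to B$ (Theorem \ref{thm:lpohsawa} with the roles of $X$ and $Y$ interchanged) produces $V\in A(Y, mK_Y)$ with $V_t=v$; set $U:=T^{-1}V\in A(X, mK_X)$. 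Then $U_t\in A(X_t, mK_{X_t})$, and since $U$ is itself an extension of $U_t$, well-definedness gives $T_t(U_t)=(TU)_t=(TT^{-1}V)_t=V_t=v$. Thus $T_t$ is surjective.

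The main obstacle is the well-definedness step: one must verify that the composite does not depend on the extension, and the content is that the fiberwise isometry hypothesis (\ref{eq:fiberint}) upgrades the vanishing of a fiber restriction on the $X$-side to vanishing on the $Y$-side. The role of the $L^{2/m}$-Ohsawa--Takegoshi theorem is twofold and essential: it makes $T_t$ defined on all of $A(X_t, mK_{X_t})$, rather than merely on the image of the global restriction map, and it supplies the extensions on the $Y$-side needed for surjectivity.
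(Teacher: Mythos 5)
Your proof is correct and follows essentially the same route as the paper: define $T_t$ by extend--apply $T$--restrict using Theorem \ref{thm:lpohsawa}, prove well-definedness by applying (\ref{eq:fiberint}) to the difference of two extensions, read off the isometry from (\ref{eq:fiberint}), and get surjectivity by the symmetric argument with $T^{-1}$. The paper compresses the surjectivity step into ``similarly,'' so your written-out version of it (and of linearity and injectivity) simply fills in details the paper leaves implicit.
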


\begin{proof}[\indent \sc proof of lemma \ref{lemma:fiberwise}]
We define the well-defined maps $\{ T_t\}_{t\in B}$. Let $u \in A(X_t, mK_{X_t})$. %Repeating the argument of Theorem \ref{thm:lpohsawa} inductively, 
We can take an extension $U \in A(X, mK_X)$ such that $U|_{X_t}=u\wedge (df_1\wedge \cdots \wedge df_r)^{\otimes m}$ and 
$$
\int_{X}|U\wedge \overline{U}|^{1/m} \leq C \int_{X_t} |u\wedge \overline{u}|^{1/m}
$$
for some positive constant $C>0$ from Theorem \ref{thm:lpohsawa}. Then we define $T_t(u):=(TU)_t$, where $(TU)_t\in A(Y_t, mK_{Y_t})$ and $(TU)|_{Y_t}=(TU)_t \wedge (df_1\wedge \cdots \wedge df_r)^{\otimes m}$. We have to show that $T_t$ are well-defined. 
If $U_1, U_2 \in A(X, mK_X)$ are both extensions of $u$ satisfying the above properties, 
$(U_1-U_2)_{t}=0$ and 
\begin{align*}
&\int_{X_t} |(U_1-U_2)_{t}\wedge \overline{(U_1-U_2)_{t}}|^{1/m} \\
=&\int_{Y_t}| (TU_1-TU_2)_t \wedge \overline{(TU_1-TU_2)_t}|^{1/m}\\
=&0.
\end{align*}
Therefore we get  $(TU_1)_t = (TU_2)_t$. 

We also have  
\begin{align*}
\int_{X_t} |u\wedge \overline{u} |^{1/m} &= \int_{X_t} |U_t\wedge \overline{U_t} |^{1/m} \\
&= \int_{Y_t} | (TU)_t \wedge \overline{(TU)_t}|^{1/m}\\
&= \int_{Y_t} | T_t(u) \wedge \overline{T_t(u)}|^{1/m}. 
\end{align*}
Similarly, we can prove that $T_t$ is surjective. Hence, $T_t : A(X_t, mK_{X_t}) \to A(Y_t, mK_{Y_t})$ is a linear isometry. 
\end{proof}

\begin{proof}[\indent \sc Proof of Theorem \ref{thm:fibration}]
%Firstly, we construct a holomorphic map $F_t:X_t \to Y_t$ for each $t\in B$. 
%This construction is described in \cite{DWZZ}. For the sake of clarity, we explain it in detail here. 
%Let $H_{X_t, z}:=\{ \eta \in A^p(X_t) \mid  \eta(z)=0\}$ be a hyperplane in $A^p(X_t)$. 
%We define a subset $X'_t \subset X_t$ as follows. 
%We say that $z\in X'_t$ if and only if there exists $w\in Y_t$ such that $T_t(H_{X_t, z})=H_{Y_t, w}$. 
%Since $A^p(Y_t)$ separates points in $Y_t$, there exists a unique $w\in Y_t$ such that the above equation holds. 
%Therefore we can define a map $F_t:X'_t \to Y_t$ by setting $F_t(z)=w$ if $T_t(H_{X_t, z})=H_{Y_t, w}$. 
%Since $X_t$ and $Y_t$ are bounded hyperconvex domain, we have that $n=m$, 
%$X'_t=X_t$, and $F_t$ is biholomorphic map from $X_t$ to $Y_t$ (cf. Theorem \ref{thm:DWZZ}). 

%Secondly, we will construct a global biholomorphic map $F:X\to Y$. 
We can construct biholomorphic maps $\{ F_t:X_t\to Y_t\}_{t\in B}$ induced by linear isometries $\{ T_t\}_{t\in B}$ by Theorem \ref{thm:stein}. 
Hence, we get $n=l$. 
Then we will make a global holomorphic map $F$ from $X$ to $Y$. 
We define a map $F$ as follows: 
$$
\begin{array}{ccc}
X & \stackrel{F}{\longrightarrow} & Y \\
\rotatebox{90}{$\in $} & & \rotatebox{90}{$\in$} \\
z & \longmapsto & F_{f(z)}(z). \\
\end{array}
$$
We know that $F_t(z)$ is holomorphic in the fiber directions for each fixed $t$. It is sufficient to show that 
the map $F$ is holomorphic in all the directions. 
%For any point $(t, z)\in X$, we will obtain a local expression of $F$ around $(t,z)$. 

%We take a holomorphic function $1 \in A^p(Y)$ which is a constant function on $Y$. 
%Then $T^{-1}(1)=:\Phi_0\in A^p(X)$ has no zeros for the reason that 
%$\Phi_0(t, z)=\Phi_0|_t(z)=T_t(\Phi_0|_t)(F_t(z))=(T\Phi_0)|_t(z)\neq 0$ for any $(t, z)\in X$. 
We take $U_0\in A(X, mK_X)$ and a countable dense subset $\{ U_1, U_2,  \cdots \}\subset A(X, mK_X)$. Then 
$\{V_0:=TU_0, V_1:=TU_1, \cdots \}$ is a countable dense subset of $A(Y, mK_Y)$. 
We remark that Theorem \ref{thm:lpohsawa} ensures that $A(X, mK_X)$ is infinite-dimensional. 
We define maps $I_N, J_N, I_\infty, J_\infty$ as follows: 
$$
\begin{array}{cccc}
I_N :& X\setminus U_0^{-1}(0) & \longrightarrow & B\times \mathbb{C}^{N} \\
& \rotatebox{90}{$\in $} & & \rotatebox{90}{$\in $} \\
& z & \longmapsto & (f(z), (\frac{U_1( z)}{U_0( z)}, \cdots, \frac{U_N(z)}{U_0(z)} )) \\
& & & \\
J_N :& Y\setminus V_0^{-1}(0) & \longrightarrow & B\times \mathbb{C}^{N} \\
& \rotatebox{90}{$\in $} & & \rotatebox{90}{$\in $} \\
& w & \longmapsto & (g(w), (\frac{V_1(w)}{V_0(w)}, \cdots, \frac{V_N(w)}{V_0(w)} ))\\
& & & \\
I_\infty :& X\setminus U_0^{-1}(0) & \longrightarrow & B\times \mathbb{C}^{\infty} \\
& \rotatebox{90}{$\in $} & & \rotatebox{90}{$\in $} \\
& z & \longmapsto & (f(z), (\frac{U_1( z)}{U_0( z)}, \frac{U_2(z)}{U_0(z)}, \cdots )) \\
& & & \\
J_\infty :& Y\setminus V_0^{-1}(0) & \longrightarrow & B\times \mathbb{C}^{\infty} \\
& \rotatebox{90}{$\in $} & & \rotatebox{90}{$\in $} \\
& w & \longmapsto & (g(w), (\frac{V_1(w)}{V_0(w)}, \frac{V_2(w)}{V_0(w)}, \cdots )).
\end{array}
$$

Next, we show that $F=J_\infty^{-1}\circ I_\infty$ on $I_\infty^{-1}\circ J_\infty(Y\setminus V_0^{-1}(0))$. 
A separation property of $A(X_t, mK_{X_t})$ and $A(Y_t, mK_{Y_t})$ implies that $I_\infty$ and $J_\infty$ are injective maps. 
For any $u \in A(X_t, mK_{X_t})$, we get an extension $U \in A(X, mK_X)$ of $u$ such that $U_t =u$. 
Since $\{ U_j \}$ (resp. $\{ TU_j \}$) is a dense subset of $A(X, mK_X)$ (resp. $A(Y, mK_Y)$), we can take a sequence $\{ U_{j_k}\} \subset \{ U_j \}$ 
such that $\| U_{j_k} -U \|_{X, m}\to 0$ and $\| TU_{j_k}-TU \|_{Y, m}\to 0$ as $k\to +\infty$. 
Therefore, we have that $U_{j_k}$ (resp. $TU_{j_k}$) 
converges compactly to $U$ (resp. $TU$) on $X$ (resp. $Y$). 
Since all compact sets of $X_t$ (resp. $Y_t$) are compact in $X$ (resp. $Y$), $(U_{j_k})_t$ (resp. $(TU_{j_k})_t$) also converges compactly to $u$ (resp. $T_tu$) on $X_t$ (resp. $Y_t$). 
Here, we remark that we do not know whether $(U_{j_k})_t\in A(X_t, mK_{X_t})$ (resp. $(TU_{j_k})_t\in A(Y_t, mK_{Y_t})$). 

For any point $z\in I_\infty^{-1}\circ J_\infty(Y\setminus V_0^{-1}(0))$, there exists a unique point $w\in Y\setminus V_0^{-1}(0)$ such that 
$$
(f(z), (\frac{U_1(z)}{U_0(z)}, \frac{U_2(z)}{U_0(z)}, \cdots ))  = (g(w), (\frac{V_1(w)}{V_0(w)}, \frac{V_2(w)}{V_0(w)}, \cdots )). 
$$
Therefore, $f(z)=g(w)$ and 
$$
\frac{U_j(z)}{U_0(z)}=\frac{V_j(w)}{V_0(w)}
$$
for all $j\in \mathbb{N}$. Set $t:=f(z)=g(w)$. 
The above argument implies that for any $u \in A(X_t, mK_{X_t})$, we have 
$$
\frac{u(z)}{(U_0)_t(z)}=\frac{T_tu(w)}{(V_0)_t(w)}.
$$
We have that $u(z)=0$ if and only if $T_tu(w)=0$. 
Hence, this $w$ satisfies the following equations 
$$
H_{Y_t, w}=T_t(H_{X_t, z}) 
$$
and $F_t(z)=w$. Consequently, we obtain that 
$$
J_\infty^{-1}(I_\infty(z))=F_{f(z)}(z)=F(z)
$$
for any $z\in I_\infty^{-1}\circ J_\infty(Y\setminus V_0^{-1}(0))$. 

%Next, we will show that $X\setminus U_0^{-1}(0)=I_\infty^{-1}\circ J_\infty(Y\setminus V_0^{-1}(0))$. 
%We can also show that  $X\setminus U_0^{-1}(0)=I_\infty^{-1}\circ J_\infty(Y\setminus V_0^{-1}(0))$. 
%Since the proof is a bit long to state here, we will put it in Appendix. 
%See Lemma \ref{lem:detailed} in Appendix. 
%
%Since $J_\infty^{-1}\circ I_\infty|_{X_t}=F_t$, $I_\infty^{-1}\circ J_\infty(Y\setminus V_0^{-1}(0))\cap X_t=X'_t\setminus (U_0)_t^{-1}(0)$. 
%From the arguments in Section \ref{sec:absolute}, $X'_t$ is actually $X_t$. Therefore, it follows that 
%$$
%I_\infty^{-1}\circ J_\infty(Y\setminus V_0^{-1}(0))=X\setminus U_0^{-1}(0). 
%$$

Then
we show that the map $F$ is holomorphic on $X$. 
It is enough to show that $F$ is holomorphic around any point $z_0\in X$ locally. 
We set $w_0:=F(z_0)$ and $f(z_0)=g(w_0)=t_0$.
Since $g=(g_1, \cdots , g_r)$ defines a submersive map,  we can take global holomorphic functions $(g_1, \cdots , g_{r}, \eta_{r+1}, \cdots, \eta_{r+n})\in \mathscr{O}(\widetilde{Y})^{n+r}$ which define a coordinate around $w_0$. 
Fix $V_0\in A(Y, mK_Y)$ such that $V_0(w_0)\neq 0$ and $(V_0)_{t_0}\in A(Y_{t_0}, mK_{Y_{t_0}})$. 
We take such an open coordinate $W\Subset W'' \subset Y$ around $w_0$ such that $V_0|_W\neq 0$. 
We also define $V_j = \eta_{j+r}V_0$ for $j\in \{ 1, 2, \cdots , n\}$.
Taking an open neighborhood $B'\subset B\subset \mathbb{C}^r$ of $t_0$, we have $V_j\in A(g^{-1}(B'), mK_Y)$ for all $j$. 
Since the setting is local, we may regard $B'=B$ and $V_j\in A(Y, mK_Y)$. 

We take $Z:=I_n^{-1}(J_n(W)) \subset X\setminus U_0^{-1}(0)$. 
By the constructions of $V_0$ and $J_n$, we have that $Z$ is an open neighborhood of $z_0$. 
Note that $I_n(z_0)=J_n(w_0)$ since $F_{t_0}(z_0)=w_0$ and $(U_j)_{t_0}\in A(X_{t_0}, mK_{X_{t_0}}), (V_j)_{t_0}\in A(Y_{t_0}, mK_{Y_{t_0}})$ for $1\leq j\leq n$. 
We also set $Z'=Z\cap I_\infty^{-1}( J_\infty(Y\setminus V_0^{-1}(0)))$. 
The above argument implies that $F(z)=J_\infty^{-1}(I_\infty(z))=J_n^{-1}(I_n(z))$ for any $z\in Z'$. 
Hence, it follows that 
\begin{equation}\label{eq:renzo}
\frac{U(z)}{U_0(z)}=\frac{TU(J_n^{-1}\circ I_n(z))}{V_0(J_n^{-1}\circ I_n(z))}
\end{equation}
for any $z\in Z'$ and $U\in A(X, mK_X)$. 
By the continuity of $J_n^{-1}\circ I_n$, the equation (\ref{eq:renzo}) holds on $\overline{Z'}$.

Here we remark that, thanks to Fubini's theorem, there exists a set of Lebesgue measure zero $N\subset B$  such that $(U_j)_t\in A(X_t, mK_{X_t})$ for $t\in B\setminus N$ and $j\in \mathbb{N}$. 
For these $t\in B\setminus N$, %we see that $I_\infty^{-1}(J_\infty(Y\setminus V_0^{-1}(0)))\cap X_t=X_t\setminus (U_0)_t^{-1}(0)$. 
we can construct the maps $I_{\infty,t}: X_t\to \mathbb{C}^\infty$ and $J_{\infty,t}: Y_t\to \mathbb{C}^\infty$ on each fiber.
%Indeed, due to the Ohsawa-Takegoshi extension theorem and the above argument, 
By definition, it holds that 
\begin{align*}
	I_\infty^{-1}&(J_\infty(Y\setminus V_0^{-1}(0)))\cap X_t\\
	= \{& z\in X_t\setminus (U_0)_t^{-1}(0) \mid \exists! w\in Y_t\setminus (V_0)_t^{-1}(0) \text{ such that } f(z)=g(w)=t~ \& \\
	&\left( \frac{(U_1)_t(z)}{(U_0)_t(z)}, \frac{(U_2)_t(z)}{(U_0)_t(z)} ,\cdots \right) =\left( \frac{(V_1)_t(w)}{(V_0)_t(w)}, \frac{(V_2)_t(w)}{(V_0)_t(w)} ,\cdots \right) \}.
\end{align*}
The family $\{ (U_j)_t\}_{j\in \mathbb{N}}$ might not be a dense subset in $A(X_t, mK_{X_t})$ with respect to $\|~\cdot~\|_{X_t, m}$, but $\{ (U_j)_t\}_{j\in \mathbb{N}}$ has the following property: for any $u\in A(X_t, mK_{X_t})$, there exists a subsequence $\{ U_{j_k}\}_{k\in \mathbb{N}}\subset \{U_j\}_{j\in \mathbb{N}}$ (resp. $\{ V_{j_k}\}_{k\in \mathbb{N}}\subset \{V_j\}_{j\in \mathbb{N}}$) such that 
$\{ (U_{j_k})_t\}_{k\in \mathbb{N}}$ (resp. $\{ (V_{j_k})_t\}_{k\in \mathbb{N}}$) converges compactly to $u$ (resp. $T_tu$) in $X_t$ (resp. $Y_t$).
Indeed, due to Theorem \ref{thm:lpohsawa}, we can take an extension $U\in A(X, mK_X)$ of $u$ such that $U|_{X_t}=u\wedge (df_1\wedge \cdots \wedge df_r)^{\otimes m}$ and 
$$
\int_X |U\wedge \overline{U}|^{1/m}\leq C \int_{X_t} |u\wedge \overline{u}|^{1/m}. 
$$
Since $\{ U_j\}_{j\in \mathbb{N}}$ is a dense subset in $A(X, mK_X)$, 
we can get a subsequence $\{ U_{j_k}\} \subset \{ U_j\}_{j\in \mathbb{N}}$ such that 
$\| U_{j_k}-U\|_{X, m}\to 0$, 
which also implies that  $U_{j_k}$ converges compactly to $U$. 
Then we can conclude that $(U_{j_k})_t$ (resp. $(TU_{j_k})_t$) also converges to $u$ (resp. $T_tu$) on every compact subset in $X_t$ (resp. $Y_t$). 
Taking the limit, we obtain that
\begin{align*}
&I_\infty^{-1}(J_\infty(Y\setminus V_0^{-1}(0)))\cap X_t\\
&= \{ z\in X_t\setminus (U_0)_t^{-1}(0) \mid \exists! w\in Y_t\setminus (V_0)_t^{-1}(0) \text{ such that } \frac{u(z)}{(U_0)_t(z)}=\frac{T_tu(w)}{(TU_0)_t(w)}~ \forall u\in A(X_t, mK_{X_t}) \}
\end{align*}
for $t\in B\setminus N$. 
In the previous section, we denote by $X'_t$ the set of the right side, and prove that $X'_t=X_t$. 
Therefore, we see that $Z\setminus Z'$ is of measure zero. 
Then, we can conclude that the equation (\ref{eq:renzo}) holds on $Z$. 
Thus $Z\subset I_\infty^{-1}(J_\infty(Y\setminus V_0^{-1}(0)))$.

%By taking $V_j = \eta_{j+r}V_0$ and $U_j=T^{-1}(V_j)$ for $j\in \{ 1, 2, \cdots , n\}$, 
On $Z$, we have that 
%\begin{align*}
%I_\infty(t, z)=&(t, (\frac{\Phi_1(t, z)}{\Phi_0(t, z)}, \cdots, \frac{\Phi_n(t, z)}{\Phi_0(t, z)}, \frac{\Phi_{n+1}(t, z)}{\Phi_0(t, z)}, \cdots ))\\
%=&(t, (w_1, \cdots, w_n, \Psi_{n+1}(t, w), \cdots))\\
%=&J_\infty(t, w). 
%\end{align*}
%Then it follows that 
$$
F(z)=(f_1(z), \cdots , f_r(z), (\frac{U_1(z)}{U_0(z)}, \cdots, \frac{U_{n}(z)}{U_0(z)})), 
$$
where $U_j=T^{-1}(V_j)$ for $j\in \{ 1, 2, \cdots , n\}$. 
Hence, we can say that $F$ is holomorphic on $Z$, which is an open neighborhood of an arbitrary point $z_0$. 
%Choosing $U_0$ arbitrarily, we conclude that $F$ is holomorphic on $X$. 
Then $F:X\to Y$ is a biholomorphic map and satisfies $f=g\circ F$. 

%Moreover, 
%$$
%|T_t\phi \circ F_t||J_{F_t}|^{\frac{2}{p}}=|\phi|
%$$
%holds for all $\phi\in A^p(X_t)$ for the reason that a biholomorphic map $F_t:X_t \to Y_t$ is induced by a linear isometry $T_t:A^p(X_t)\to A^p(Y_t)$ (Theorem \ref{thm:DWZZ}). 

By the construction of $F$, we also have that $F|_{X_t}=F_t$. Hence, we get 
$$
|u(z)\wedge \overline{u(z)}|^{1/m}= |F_t^{\star}(T_tu)(z)\wedge \overline{F_t^{\star}(T_tu)(z)}|^{1/m}
$$
for $t\in B$, $z\in X_t$, and $u\in A(X, mK_X)$. 
\end{proof}

\begin{remark}

For each $t\in B$, we can construct the map $I_{\infty, t}: A(X_t, mK_{X_t})\to A(Y_t, mK_{Y_t})$ as in Section \ref{sec:absolute}. 
Then $\{ I_{\infty, t}\}_{t\in B}$ consists of uncountable elements. 
However, by using the Ohsawa-Takegoshi extension theorem properly, 
we can choose countably infinite elements $I_\infty=(U_1/U_0, U_2/U_0, \cdots)\subset \{ I_{\infty, t}\}_{t\in B}$ satisfying $F=J_\infty^{-1}\circ I_\infty$. 
This is an important point of the proof of Theorem \ref{thm:fibration}. 
\end{remark}

Gluing the above local result, we can prove Theorem \ref{thm:hariawase}. 

\begin{proof}[\indent \sc Proof of Theorem \ref{thm:hariawase}]
Since $f$ and $g$ are relatively compact Stein morphisms, for each point $t\in T$, we can take an open neighborhood $D$ of $t$ such that $f^{-1}(D)$ and $g^{-1}(D)$ are Stein. 
Then there exists a linear isomorphism $T_D: H^0(f^{-1}(D), mK_X) \to H^0(g^{-1}(D), mK_Y)$ which preserves the $L^{2/m}$-norm on each fiber. 
By Fubini's theorem, we see that $T_D$ induces a linear isomorphism 
$$
T_D: A(f^{-1}(D), mK_X)\to A(g^{-1}(D), mK_Y).
$$ 
Then $f^{-1}(D)$ and $g^{-1}(D)$ satisfy the assumptions of Theorem \ref{thm:fibration}. 
Hence, $n=l$. We can also construct the unique biholomorphism $F_D: f^{-1}(D) \to g^{-1}(D)$ satisfying the conditions of Theorem \ref{thm:fibration}. 

Then we define a global biholomorphic map $F:X \to Y$ as 
$$
F(z)=F_D(z)
$$
for some open neighborhood $D$ around $f(z)=:t$. The map $F$ is independent of the choice of $D$ for the following reason. 
Let $D_1$ and $D_2$ be two open neighborhoods such that $t\in D_1 \cap D_2$. 
Repeating the proof of Lemma \ref{lemma:fiberwise}, we have that $T_{(D_1)_t}=T_{(D_2)_t}$ since $T_{D_1}$ and $T_{D_2}$ commute with the restriction maps of sheaves. 
Therefore, fiberwise linear isometries $T_t :A(X_t, mK_{X_t})\to A(Y_t, mK_{Y_t})$ are uniquely induced by $T$. 
Then $F_{D_1}|_{X_t}$ and $F_{D_2}|_{X_t}$ are uniquely determined by $T_t$. 
In other words, 
from the results of Theorem \ref{thm:fibration}, $F_{D_1}$ and $F_{D_2}$ satisfy the following equations 
$$
|u(z)\wedge \overline{u(z)}|^{1/m}= |F_{(D_1)_{t}}^{\star}(T_tu)(z)\wedge \overline{F_{(D_1)_{t}}^{\star}(T_tu)(z)}|^{1/m}
$$
$$
|u(z)\wedge \overline{u(z)}|^{1/m}= |F_{(D_2)_{t}}^{\star}(T_tu)(z)\wedge \overline{F_{(D_2)_{t}}^{\star}(T_tu)(z)}|^{1/m}
$$
for $z\in X_t$ and $u\in A(X_t, mK_{X_t})$. Hence, 
$$
F_{D_1}(z)=F_{(D_1)_{t}}(z)=F_{(D_2)_{t}}(z)=F_{D_2}(z). 
$$
Then $F$ gives the biholomorphism $F:X \to Y$ which satisfies the condition of Theorem \ref{thm:hariawase}. 
\end{proof}
%%%%%%%%%%%%%%%%%%%%%%%%%%%%%%%%%%%%%%%%%%%%%%%%%%%%%%

%%%%%%%%%%%%%%%%%%%%%%%%%%%%%%%%%%%%%%%%%%%%%%%%%%%%%%
%%%%%%%%%%%%%%%%%%%%%%%%%%%%%%%%%%%%%%%%%%%%%%%%%%%%%%

\section{Further remarks}\label{app:direct}
We introduce the direct proof of Corollary \ref{cor:domainfibration}. 
We do not need explicitly the $L^{2/m}$-variant of the Ohsawa-Takegoshi extension theorem. 

\begin{proof}[\indent \sc proof of Corollary \ref{cor:domainfibration}]
We obtain fiberwise linear isometries $T_t:A^p(X_t)\to A^p(Y_t)$ and biholomorphic maps $F_t : X_t \to Y_t$ for each $t\in B$ (cf. Lemma \ref{lemma:fiberwise}). 
Hence, we get $n=l$. 
Since $Y$ is a domain in $\mathbb{C}^{r+n}$, we take a global coordinate $(t_1, \cdots , t_r, w_{1}, \cdots , w_{n})$ on $Y$.  
We can assume that $g:Y \to B$ is defined as 
$$
g(t_1, \cdots , t_r, w_{1}, \cdots , w_{n})=(t_1, \cdots , t_r)
$$
without any loss of generality. Then $(w_1, \cdots, w_n)$ gives a global coordinate function on each $Y_t$. 
Let $\phi_0=T_t^{-1}(1), \phi_1 =T_t^{-1}(w_1), \cdots , \phi_n=T_t^{-1}(w_n)$ in $A^p(X_t)$. 
The results of \cite{DWZZ} (cf. Theorem \ref{thm:DWZZ}) imply that the expression of $F_t$ is given by 
$$
F_t = J_{\infty}^{-1}\circ I_\infty = J_n^{-1} \circ I_n=(\frac{\phi_1}{\phi_0}, \cdots , \frac{\phi_n}{\phi_0})
$$
globally on $X_t$. 

In this setting, we let $\Psi_0=1, \Psi_1=w_1 , \cdots , \Psi_n=w_n$ in $A^p(Y)$, and define $\Phi_0=T^{-1}(\Psi_0), \Phi_1=T^{-1}(\Psi_1), \cdots , \Phi_n=T^{-1}(\Psi_n)$ in $A^p(X)$. 
By the definition of $T_t$, we have 
$$
\Phi_0|_{X_t} = T_t^{-1}((T\Phi_0)|_{Y_t})=T_t^{-1}(1), \hspace{3.5mm}\Phi_j|_{X_t} = T_t^{-1}((T\Phi_j)|_{Y_t})=T_t^{-1}(w_j).
$$
Letting 
$$
F:=( t_1, \cdots, t_r, \frac{\Phi_1}{\Phi_0}, \cdots , \frac{\Phi_n}{\Phi_0} ), 
$$
we see that $F$ is a well-defined holomorphic map and $F|_{X_t}=F_t$. Hence, $F$ is a biholomorphic map from $X$ to $Y$ and 
gives the condition of Corollary \ref{cor:domainfibration}. 
\end{proof}


\begin{thebibliography}{99}
\bibitem{BP08}
B.~Berndtsson and M.~P{\u{a}}un, \emph{Bergman kernels and the
  pseudoeffectivity of relative canonical bundles}, Duke Math. J. \textbf{145}
  (2008), no.~2, 341--378.
 \bibitem{BP} B.~Berndtsson and M.~P\u{a}un, \emph{Bergman kernels and subadjunction}, arXiv:1002.4145.
 \bibitem{Chi} C.-Y.~Chi, \emph{Pseudonorms and theorems of Torelli type}, J. Differential Geom. \textbf{104}, (2016), no.~2, 239-273.
 \bibitem{ChiYau} C.-Y.~Chi and S.-T.~Yau, \emph{A geometric approach to problems in birational geometry}, Proc. Natl. Acad. Sci. USA \textbf{105}, (2008), no.~48, 18696-18701.
 \bibitem{Dem2} J.-P. Demailly, \emph{Mesures de Monge-Amp\`ere et mesures pluriharmoniques}, Math. Z. \textbf{194}, (1987), no.~4, 519-564. 
 \bibitem{DemCom} J.-P. Demailly, \emph{Complex analytic and differential geometry}, http://www-fourier.ujf-grenoble.fr/~demailly/manuscripts/agbook.pdf.
 \bibitem{DWZZ1} F.~Deng, Z.~Wang, L.~Zhang, X.~Zhou, \emph{New characterizations of plurisubharmonic functions and positivity of direct image sheaves}, arXiv:1809.10371
 \bibitem{DWZZ} F.~Deng, Z.~Wang, L.~Zhang, and X.~Zhou, \emph{Linear invariants of complex manifolds and their plurisubharmonic variations}, J. Funct. Anal. \textbf{279}, (2020), no.~1, 108514. 
 \bibitem{GZ} Q.~Guan and X.~Zhou. \emph{A solution of an $L^2$ extension problem with an optimal estimate and applications}, Ann. of Math. (2), \textbf{181}, (2015), no.~3, 1139-1208.
 \bibitem{HPS} C.~Hacon, M.~Popa, and C.~Schnell, \emph{Algebraic fiber spaces over Abelian varieties :
around a recent theorem by Cao and Paun}, Contemp. Math. \textbf{712}, (2018), 143-195. 
 \bibitem{Marko} V.~Markovic, \emph{Biholomorphic maps between Teichm\"uller spaces}, Duke Math J. \textbf{120}, (2003), no.~2, 405-431. 
 \bibitem{NZZ} J.~Ning, H.~Zhang, and X.~Zhou, \emph{On $p$-Bergman kernel for bounded domains in $\mathbb{C}^n$}, Comm. Anal. Geom. \textbf{24}, (2016), no.~4, 887-900.
 \bibitem{Oh1} T.~Ohsawa, \emph{Boundary behavior of the Bergman kernel function on pseudoconvex domains}, Publ. RIMS. Kyoto Univ. \textbf{20}, (1984), 897-902. 
 \bibitem{Oh3} T.~Ohsawa, \emph{On the Bergman kernel of hyperconvex domains}, Nagoya Math. J. \textbf{129}, (1993), 43-52. 
 \bibitem{Oh2} T.~Ohsawa, \emph{$L^2$ approaches in several complex variables : development of Oka-Cartan theory by $L^2$ estimates for the $\overline{\partial}$ operator}, Springer Monographs in Mathematics, Springer Tokyo, (2015). 
 \bibitem{PT} M.~P{\u{a}}un and S.~Takayama, \emph{Positivity of twisted relative pluricanonical bundles and their direct images}, J. Algebraic Geom. \textbf{27}, (2018), 211-272. 
 \bibitem{Roy} H.L.~Royden, \emph{Automorphisms and isometries of Teichm\"uller space}, Advances in the Theory of Riemann Surfaces, Ann. of Math. Studies \textbf{66}, (1971), 369-383. 
 \bibitem{Rudin} W.~Rudin, \emph{$L^p$-isometries and equimeasurability}, Indiana Univ. Math. J. \textbf{25}, (1976), no.~3, 215-228.
\end{thebibliography}
\end{document}